\newcommand{\labbel}[1]{\label{#1} [[{\bf #1}]]}  
  \newcommand{\bibbitem}[1]{\bibitem{#1} [[{\bf #1}]]}
\renewcommand{\labbel}{\label} \renewcommand{\bibbitem}{\bibitem}  
\numberwithin{equation}{section}
\newtheorem{theorem}{Theorem}[section]
\newtheorem{lemma}[theorem]{Lemma}
\newtheorem{proposition}[theorem]{Proposition}
\newtheorem*{thm26a}{Theorem 2.6.a}
\theoremstyle{definition}
\newtheorem{definition}[theorem]{Definition}
\newtheorem*{def26b}{Definition 2.6.b}
\newtheorem*{prob40a}{Problem 4.0.a}
\newtheorem{problem}[theorem]{Problem}
\theoremstyle{remark}
\newtheorem{remark}[theorem]{Remark}
\newtheorem{example}[theorem]{Example}
\newtheorem*{ex32a}{Example 3.2.a}
\newtheorem*{acknowledgement}{Acknowledgement}
\definecolor{reedver}{RGB}{255,0,0}
\definecolor{blueever}{RGB}{0,0,255}
\definecolor{greeenver}{RGB}{0,155,0}
\newcommand{\redver}[1]{{\color{reedver}#1}}  
\newcommand{\bluever}[1]{{\color{blueever}#1}}  
\newcommand{\greenver}[1]{{\color{greeenver}#1}}
\DeclareMathOperator{\On}{On}
\newcommand{\+}{\mathbin{\hash}}
\DeclareMathOperator*{\nsum}
    {\ifdim\displaywidth>0pt {{\mbox{\raisebox{-0.4ex}{\LARGE$\hash$}}}}
     \else{{\mbox{\raisebox{-0.3ex}{\Large$\hash$}}}}
\fi}  
\newcommand{\sumn}{\nsum^N} 
\newcommand{\sums}{\nsum^S}  
\newcommand{\suma}{\nsum^{S2}} 
\newcommand{\sumb}{\nsum^{S1}}
\newcommand{\sumt}{\nsum^T}
\newcommand{\suml}{\nsum^L}
\newcommand*\xbar[1]{
  \hbox{
    \vbox{
      \hrule height 0.5pt 
      \kern0.3ex        
      \hbox{
        \kern-0.3em
        \ensuremath{#1}
      }
    }
  }
}
\newcommand{\sumh}{\sum^{\#}}
\newcommand{\osum}{\bigoplus}
\newcommand{\x}{\mathbin {{\times} \hspace{-6.7 pt}{\times}}}
\begin{document}
 \title{Monotone   Infinitary   Operations   on   Ordinals (extended version)}  

\author{Paolo Lipparini} 
\address{Dipartimento di Mametatica\\Viale della  Ricerca Scientifica\\II Universit\`a di Roma ``Tor Vergata''\\I-00133 ROME ITALY (currently retired)}
\urladdr{http://www.mat.uniroma2.it/\textasciitilde lipparin}

\keywords{Infinitary ordinal sum; infinite mixed sum; Hessenberg natural sum; 
ordinal arithmetics;
strictly monotone operation; 
 surreal number  
}

\subjclass[2010]{Primary 03E10; secondary 06A05}
\thanks{Work performed under the auspices of G.N.S.A.G.A. Work 
partially supported by PRIN 2012 ``Logica, Modelli e Insiemi''.
The author acknowledges the MIUR Department Project awarded to the
Department of Mathematics, University of Rome Tor Vergata, CUP 
E83C18000100006.}

\begin{abstract}
We define and study  an $ \omega $-ary operation $\sums$ 
on the class of the ordinals,
which is strictly monotone in many significant
cases (by an elementary argument, there is no
\emph{fully} strictly monotone infinitary operation
on ordinals). We compare $\sums$ with the finitary 
Hessenberg natural sum, which is the smallest \emph{finitary}
 strictly monotone operation on each argument. 
 We also compare $\sums$
 with  other infinitary generalizations of Hessenberg sum.   

We provide order-theoretical characterizations
of $\sums _{i < \omega} \alpha_{i}$, both as the rank of the sequence 
$( \alpha_i) _{i < \omega} $ 
in an appropriate
well-founded order, and as a mixed (or shuffled)
sum of the ordinals in the sequence.
The latter means that the ordinal $\sums _{i < \omega} \alpha_{i}$
is the largest realization as an order-preserving
 disjoint union of copies of the $\alpha_i$,
under some boundedness restriction.
 The former 
 characterization can be recast in terms of  combinatorial games,
leading to the problem whether $\sums$ can be extended
 to the class of Conway surreal numbers.   
\end{abstract} 
 
\maketitle

 This manuscript is an extended version of 
``A Monotone   Infinitary   Operation   on   Ordinals''  
which has appeared  
in print on Mathematical   Logic   Quarterly 
(Vol. 72, no. 2 (2026), doi:10.1002/malq.70019) 
and which 
has been shortened in the published version
owing to space constraints.
In particular, in Appendix I we recast
some results  in terms of  combinatorial game
theory. In Appendix II we present a more direct proof
of the ``minimality'' part of Theorem \ref{simpth}.

Numberings of theorems, equations etc.
are consistent with numberings in the
published version
(but not so for numbered items in the reference list).

\setcounter{section}{-1}

\section{Introduction} \labbel{intro} 

\subsection{Infinitary natural operations} \labbel{infop} 

In this paper we continue the  study of possible infinitary extensions
of the Hessenberg natural sum on ordinals \cite[Ch. XX]{Hes}. 
Recall that the finitary Hessenberg natural sum on the class of ordinals is 
algebraically better behaved than the more usual
ordinal sum \cite{A,Bac,HP,Sier}. Among other possible definitions, the 
Hessenberg natural sum is the smallest binary operation on the
class of ordinals which is strictly monotone on each argument \cite{T}. 

A transfinite  generalization $\sumh $ 
of the finitary Hessenberg natural  can be defined
 by taking Hessenberg natural sums at successor
stages and suprema at limit stages. 
Chatyrko \cite{Ch} denoted this operation by $(+)$ 
and used it to evaluate various  transfinite dimensions
of products of topological spaces.
The transfinite operation $\sumh $ has been
later extensively studied in \cite{t}.
This operation, together with further ``natural'' infinitary operations,
have been used by Berarducci and Mamino \cite{BM}  in order to get a partial 
solution to an old classical problem by Skolem
concerning the order-type of germs at infinity of a certain class 
of positive real valued functions.

The special case of $\sumh $ in which all summands are equal
has been considered much earlier  in 1909 by  Jacobsthal \cite{J}.
This so called \emph{Jacobsthal product} and further
generalizations \cite{A},
sometimes independently rediscovered \cite{Gonshor},
 have subsequently found
applications to ordinal invariants  associated
to well-partial orderings \cite[Def. 3.6]{dJP} 
and to posets with the finite antichain property
 \cite[Lemma 1.11 and Theorem 2.3]{AB}.

Returning to the case of arbitrary summands,
V\"a\"an\"anen and Wang \cite{VW}
used the countable version of 
  $\sumh$ in order to prove the
 adequacy theorem for an
Ehrenfeucht-Fra\"\i ss\'e game appropriate for
$\mathcal L _{ \omega _1, \omega } $.
The main tool in \cite{VW} 
is a  notion of
``size''  for an 
$\mathcal L _{ \omega _1, \omega } $-formula,
 a  measure of complexity finer than the more
usual quantifier rank.
The size of a formula is defined by 
transfinite induction and $\sumh$ 
appears in the definition of the size
of infinitary disjunctions and conjunctions.
V\"a\"an\"anen and Wang's definition applies 
with no essential modification to get an ordinal  notion of ``size''
for a well-founded tree without infinite
branches and  such that each node has
at most countably many descendants.
See Example \ref{meglvw} for explicit details.  

While, so far, applications 
of infinitary ``natural''   operations 
 can be counted on the fingers of one hand, 
it should be mentioned that the study of such infinite operations 
has sometimes
shed new 
light to the more classical ordinal arithmetics, e.~g.,
\cite{A}, \cite[Remark 3.2 and Section 5]{w},
\cite[p. 524]{t},
or has led to new problems, in particular,  
the problem of generalizing to the surreal numbers some 
infinitary operations on ordinals
\cite[p. 256]{w}, 
 \cite[Section 5.4]{t}.  See also Problem \ref{probgam} here
and subsequent comments. 
As another example, in \cite{infnp} some possibly new techniques
are developed for transferring results about sums
to results about products. In this connection, but limited to 
finitary operations, see also \cite{HP}.   

In the special case of $ \omega$ arguments,
the operation $\sumh $ is defined by taking the supremum
of the partial Hessenberg natural sums.
In this special case, $\sumh $
 shares some good associativity
properties, but  generally fails badly, as far as strict monotonicity
is concerned \cite[p. 251]{w}. In fact, it is elementary to see that
there is no everywhere defined infinitary operation on ordinals which
is strictly monotone on each argument. See Remark \ref{no} below.
Note that the countable case of $\sumh$ has been denoted by $\nsum$
 in \cite{w}.

\subsection{A nearly strictly monotone operation} \labbel{nearl}
 
In this paper we introduce an $ \omega$-ary
operation $\sums $ which is  strictly monotone in 
 all significant cases, and is the smallest
such operation (Theorem \ref{simpth}).
Recall that, as we mentioned, there is
no  infinitary operation on ordinals which is strictly monotone
on each argument.
The motivations for introducing $\sums $
are the same as  Hessenberg's.
Indeed, Hessenberg called his sum   ``natural''
because, besides being 
commutative and
monotone, it is cancellative\footnote{\emph{\dots jeder 
der beiden Summanden durch den anderen eindeutig bestimmt ist, 
Eigenschaften, die die Bezeichnung dieser Summe als der ``nat\"urlichen''  rechtfertigen.} \cite[p. 105]{Hes}.}.
 Now note that a monotone 
operation on the ordinals
is cancellative if and only if it is strictly monotone.
Hence our operation $\sums$ is  the best
possible infinitary approximation of Hessenberg project.
 A proof for the existence of such an infinitary operation
is quite easy and is presented in Theorem \ref{thm}.6.a.
The main theme of this paper is
 to describe explicitly the smallest such operation.
See Definition \ref{simpldef} and Theorem \ref{simpth}.
We briefly hint to some differences
between $\sums$  and 
$\sumh$ in Remark \ref{meglvw}.  
Some auxiliary variants of  
$\sums$ are also introduced, denoted by different
superscripts: see Definition \ref{s1s2} and Remark \ref{snote}.

We provide further arguments
suggesting the naturalness of $\sums$. This operation
has two  order-theoretical characterizations, described in Sections \ref{order}
and \ref{rank}
 and a  
 game-theoretical characterization,
presented in Section \ref{game} (Appendix I). 
Some additional remarks are contained in Section \ref{fur}.   
The order-theoretical characterization of $\sums$ presented in Section \ref{order}
fits well with other similar characterizations of ``natural''
operations. As well-known \cite{Car,neumer}, the
Hessenberg natural sum of a finite set of ordinals is the largest
order-type which can be realized as the disjoint union of 
the summands, preserving the order.
 Similar results hold for infinitary ``natural''
operations \cite{w,t,infnp},
subject to some regularity conditions on the
union.   The condition appropriate for $\sums$ will be described in
Definition \ref{ormf} and called \emph{order respecting modulo finite},
for short, \emph{ormf}.  

As far as the characterization in 
Section \ref{rank} is concerned, 
the fact that the Hessenberg natural sum $\+$ 
is the smallest strictly monotone operation on the ordinals
means exactly that $\+$ has the following inductive definition: 
\begin{equation}\labbel{supd} 
\alpha \+ \beta =
\sup  \{ S( \alpha' \+ \beta) ,  S(  \alpha \+ \beta' ) \mid
\alpha' <  \alpha, \beta' < \beta  \},
  \end{equation}    
where $S$ denotes the successor function.
In turn, \eqref{supd}  means that 
$\alpha \+ \beta$ is the rank of the pair
$(\alpha, \beta)$  in the well founded ordered class 
${\On} \times {\On}$, with componentwise order.
Similarly, we show in Theorem \ref{thmgamr} 
that $\sums _{i < \omega} \alpha_{i}$ 
is equal to the rank of the sequence $( \alpha_i) _{i < \omega} $ 
in an appropriate well-founded order.

 In another direction, and at first glance, the game-theoretical characterization
of $\sums$ 
could appear to be an essentially new phenomenon.
However, it should be mentioned that the standard 
game-theoretical operations on the surreals, when restricted
to the ordinals, are the Hessenberg natural sum and product, so that, again,
our game-theoretical characterization fits well with former results
and methods. 
In the above context, a natural problem arises: the problem of generalizing
$\sums$  to the class of surreal numbers.

Finally, let us mention that there  exists another operation $\sumn$ 
which, in a sense,  retains as much strict monotonicity as possible.
See Theorem \ref{thm}.6.a (A). 
 However, $\sumn$ seems to behave
in a rather weird fashion; see Remark \ref{snote}(c).

In the following subsections we present some more details about the above-hinted
notions.

\subsection{A primer about finitary ordinal operations} \labbel{primer} 

Transfinite ordinals take their name from the fact
that they are a class of indexes which can be used
for the purpose of  carrying over
iterated transfinite constructions.
This fact immediately entails a very natural 
and by now standard definition 
for the sum $+$ of two ordinals: intuitively, $\alpha + \beta $
is the index set appropriate for an iteration performed
 $\alpha$ times followed by an iteration
performed $\beta$ times.
From an order-theoretical point of view,
$\alpha + \beta $ is the order-type
of a copy of $\beta$ juxtaposed after a copy of $\alpha$.
Of course, the ordinal sum has also a classical and
well-known inductive
definition \cite{Bac}  and, when restricted to natural numbers, 
it coincides with the usual sum.

The above-described ordinal sum $+$ is associative, right-cancellative,
right-distri\-butive and weakly monotone on both arguments.
While the above comments suggest that this ``classical'' ordinal sum $+$ is a
very ``natural'' operation, it does not share all desirable
algebraic properties: it
is neither commutative nor left cancellative,
nor strictly monotone on the left.
To remedy the above shortcomings,
Hessenberg \cite[Ch. XX]{Hes} introduced another
ordinal operation $\+$ that he called ``natural''
(``nat\"urliche''). Hessenberg natural sum 
can be computed by expressing summands in Cantor
normal form and then summing ``linearly'';
see Definition \ref{natsumdef} below for 
explicit details. 
Alternative characterizations are recalled in
Proposition \ref{minnat}, some subsequent comments and in Theorem \ref{carr}.  
 See \cite{Bac,Sier} and our  review \cite{Lrev} of \cite{A}
for further details and historical remarks.

Carruth \cite{Car} characterized
the Hessenberg natural sum (denoted by $\sigma( \alpha, \beta )$ in \cite{Car})
 as the smallest  binary operation on the ordinals
which is commutative, associative, strictly monotone and for which
$0$ is a neutral element. Most of the above assumptions are redundant,
since the Hessenberg natural sum is the smallest binary
 operation on the ordinals which is 
 strictly monotone on both arguments, as shown by Toulmin \cite{T}.
We are going to 
give a proof of this result in Proposition \ref{minnat},
since it will be relevant for our discussion later.
See \cite{HP} for other axiomatizations of the Hessenberg natural sum. 

 Of course,  smallest  is intended in the following
sense: an operation $\oplus $ is \emph{smaller} than another operation 
$\oplus'$ if $ \alpha \oplus \beta \leq \alpha \oplus' \beta  $,
for every $\alpha$ and $\beta$. The above definition makes sense
for every operation defined on a partially ordered class, but,
 except for some examples and problems, 
 here we will consider only operations defined
on the class of the ordinals (hence \emph{strictly monotone} necessarily means
\emph{strictly increasing}, so there is no ambiguity. 
We will always mean \emph{monotone} in the sense of
\emph{nondecreasing}, in other words, increasing,  not necessarily 
strictly).

As we mentioned,  Hessenberg sum
is indeed commutative and, moreover, cancellative;
this
is the reason why Hessenberg called it 
``natural''. 
Moreover, Hessenberg sum is strictly monotone on both sides.
The main shortcoming of the Hessenberg sum seems to be that
it is not continuous at limits, for example
$ \sup _{ n< \omega} (1\+n) = \omega  $,
while 
  $ 1\+\sup _{ n< \omega} n = \omega +1 $.
On the other hand, right-continuity of the classical sum $+$
seems to be a fundamental aspect in order to correctly carry 
out transfinite constructions.
It should be mentioned, however, 
that the Hessenberg natural sum, being 
strictly monotone on both arguments, 
is sometimes useful for double 
transfinite induction.

Though, of course, it is a subjective matter to consider some notion
``natural'' or not, the standard terminology in the literature 
uses the expression ``natural sum'' for  the Hessenberg  sum.
In order to avoid any possible misunderstanding, here we will always
use the expression  \emph{Hessenberg natural sum} 
written out in full. By the way, there are also
``classical'' and ``natural'' products, we will recall 
in Definition \ref{hprod}.6.b.
As above, the classical sum is right but not left distributive
with respect to the classical product, while distributivity
always holds for the ``natural'' operations. 
A short list of further mathematical applications
of the natural operations,  
sometimes even outside logic, has been
presented in \cite{infnp}.

\subsection{On the ``right''  choice of some infinitary ordinal operation} \labbel{infin} 
As we have mentioned at the beginning, in the present work we are concerned 
with infinitary generalizations of the Hessenberg natural sum.
As we have briefly recalled, such infinitary generalizations  have been 
occasionally considered with some applications 
\cite{BM,Ch,VW}. 
Jacobsthal product \cite{J}, which is in fact  an infinite Hessenberg natural sum in disguise
(see \cite{A}),
has found some applications, as well \cite{AB,A}.
Note that some authors of the mentioned references were not aware of
the original papers and rediscovered the notions.

The classical ordinal sum itself can be iterated transfinitely and,
as intuitively clear from the first paragraph in the previous subsection, this
transfinite ordinal sum satisfies a very strong form
of infinitary associativity \cite[p. 51]{Bac}.
Similarly, V. A. Chatyrko \cite{Ch} introduced a transfinite
iteration $\sumh $ of the Hessenberg natural sum  by taking 
suprema at limit stages.
This operation has proved useful, but we believe that it does not 
really fit with Hessenberg program.
Considering continuity at limit stages is not in the spirit
of Hessenberg proposal; actually, already Hessenberg  
binary ``natural'' sum
 is generally not continuous at limits. 
In fact, as a result, and as we have already mentioned,
$\sumh $ fails badly, as far as strict monotonicity is
concerned.

Hence it seems to be some good news 
that there is an operation which is strict monotone in a large
number of cases. Though the present paper is not explicitly
devoted to the analysis of applications
of $\sums$, some possible applications are 
 hinted in Example \ref{meglvw}. 

Given the above discussions, the reader
will have no difficulty to convince herself
that there exists no infinitary operation
satisfying at the same time all the possible desirable properties.
See 
\cite[Ch. 5]{Wo} for more arguments.
See also, e.~g., \cite{A},  \cite[p. 370]{VW} 
and Remarks \ref{no},  \ref{perforza} and \ref{limitat}   here.
Thus the choice of some appropriate infinitary operation
must necessarily depend on the context and on the desired
applications. Of course, unless one chooses to adhere
to strict finitism, in which case probably\footnote{We say
``probably'' since, for example, the semiring  of ordinals $< \omega ^ \omega  $ 
with natural operations is isomorphic to the semiring of polynomials
over $ \mathbb N$. Hence, even if considering just a finite set of natural
numbers, one can consider some indeterminate $x$ as sharing
some properties of an ``infinite'' object.} the present paper will
 be completely useless! \smiley

\section{The finitary Hessenberg natural sum and some obstacles to infinitary generalizations} \labbel{fobs} 

 We will always consider the ordinal $0$
to be neither limit, nor successor, that is, the only one of his kind. 
Recall that every nonzero ordinal $\alpha$  can be expressed uniquely in
\emph{Cantor normal form} as
$\alpha  =  \omega ^ {\xi_h} r_h +  \dots+ \omega ^ {\xi_0}r_0$ 
with $\xi_h > \xi_{h-1}> \dots > \xi_0$ ordinals  
and $r_h, \dots, r_0$ strictly positive natural numbers.

\begin{definition} \labbel{natsumdef}   
Explicitly, the \emph{Hessenberg natural sum}
of two ordinals is defined as follows. 
If $\alpha  =
 \omega ^ {\xi_h} r_h + 
\dots
+ \omega ^ {\xi_0}r_0$  
and
$ \beta  =
 \omega ^ { \rho _k} s_k + 
\dots
+ \omega ^ { \rho _0}s_0$  are expressed in 
Cantor normal form,
the summands of $\alpha \+ \beta $
in normal form are  
  \begin{itemize}  
 \item  
$ \omega ^ {\xi_i} r_i $, for those $i$ such that  
 $\omega ^ {\xi_i}$ does not appear in the expression defining
$\beta$,
\item
$ \omega ^ { \rho _j} s_j $,
for those $j$ such that  
 $\omega ^ { \rho _j}$ does not appear in the expression defining
$\alpha$, and
\item
$ \omega ^ {\xi_i} (r_i + s_i) $
for those $i$ such that  
 $\omega ^ {\xi_i}$ does  appear in both the expressions defining
$\alpha$ and $\beta$.
 \end{itemize}
The above definition applies to the ordinal $0$, when
 considered as an empty sum.
Otherwise, set $0 \+ \beta  = \beta  $
and $ \alpha \+ 0 = \alpha $ explicitly.    
 \end{definition}

 As we mentioned in the 
introduction, Hessenberg natural sum is not continuous at limits.
However, the following partial form
of continuity will play an important role
in the present note.

\begin{remark} \labbel{lim}  
If, in the  notation from Definition \ref{natsumdef}, both $\alpha$ and $\beta$ 
are limit ordinals and $\xi_0 \leq \rho _0$, then 
$\alpha \+ \beta = \sup \{ \alpha' \+ \beta ,  \mid  
\alpha' < \alpha \} $. 
  \end{remark}

\begin{proposition} \labbel{minnat}
 The Hessenberg natural sum $\+$  is the smallest binary operation on the ordinals which is 
 strictly monotone on both arguments. 
 \end{proposition}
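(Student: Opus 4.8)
The claim has two halves: (i) the Hessenberg natural sum $\+$ is strictly monotone on both arguments, and (ii) any binary operation $\oplus$ on the ordinals that is strictly monotone on both arguments satisfies $\alpha \+ \beta \le \alpha \oplus \beta$ for all $\alpha, \beta$. Strict monotonicity of $\+$ (half (i)) is immediate from the explicit Cantor-normal-form description in Definition \ref{natsumdef}: increasing one summand, say replacing $\alpha$ by $\alpha' > \alpha$, strictly increases the linear sum of coefficients grouped by exponent, because the normal-form comparison of ordinals is lexicographic in the exponents and then in the coefficients. So the real content is half (ii), the minimality.

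For minimality my plan is to prove, by a double transfinite induction on the pair $(\alpha,\beta)$ ordered by the componentwise well-founded order on $\On \times \On$, that $\alpha \oplus \beta \ge \alpha \+ \beta$ for every strictly monotone $\oplus$. Fix such an $\oplus$. The key engine is the inductive characterization of $\+$ recorded in equation \eqref{supd}, namely
\[
\alpha \+ \beta = \sup\{\, S(\alpha' \+ \beta),\ S(\alpha \+ \beta') \mid \alpha' < \alpha,\ \beta' < \beta \,\}.
\]
Granting this identity for the moment, the induction runs as follows. For any $\alpha' < \alpha$, strict monotonicity of $\oplus$ in the left argument gives $\alpha' \oplus \beta < \alpha \oplus \beta$, hence $S(\alpha' \oplus \beta) \le \alpha \oplus \beta$; and by the induction hypothesis applied to the componentwise-smaller pair $(\alpha',\beta)$ we have $\alpha' \+ \beta \le \alpha' \oplus \beta$, so $S(\alpha' \+ \beta) \le S(\alpha' \oplus \beta) \le \alpha \oplus \beta$. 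The symmetric argument on the right argument handles $S(\alpha \+ \beta')$. Taking the supremum over all such $\alpha'$ and $\beta'$ and using \eqref{supd} yields $\alpha \+ \beta \le \alpha \oplus \beta$, completing the induction.

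The main obstacle is therefore to establish the inductive formula \eqref{supd} itself for $\+$ as defined via Cantor normal forms. This is where I expect the genuine work to lie, and I would prove it by cases on whether $\alpha$ and $\beta$ are zero, successor, or limit ordinals. The successor and zero cases are direct computations from Definition \ref{natsumdef}, using that $\+$ adds $1$ to the lowest-degree coefficient when passing to a successor. The limit-involved cases are exactly where Remark \ref{lim} does the work: that partial-continuity statement lets me express $\alpha \+ \beta$ as a supremum of $\alpha' \+ \beta$ (or symmetrically) when the relevant lowest exponents compare favorably, and a short case analysis on the two lowest exponents $\xi_0, \rho_0$ reduces the general limit case to an application of Remark \ref{lim} after possibly swapping the roles of $\alpha$ and $\beta$ using commutativity of $\+$. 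Once \eqref{supd} is in hand, the minimality induction above is routine, and combining it with half (i) gives that $\+$ is the smallest strictly monotone binary operation, as asserted.
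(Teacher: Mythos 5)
Your proposal is correct, but it inverts the logical organization of the paper's proof, so the comparison is worth spelling out. The paper proves minimality directly: it fixes a strictly monotone $\oplus$ and shows $\alpha \+ \beta \leq \alpha \oplus \beta$ by induction on the \emph{value} of $\alpha \+ \beta$, splitting into the cases where $\alpha \+ \beta$ is zero, successor, or limit; the successor case uses the successor decomposition of $\alpha$ or $\beta$, and the limit case uses Remark \ref{lim} exactly as you propose. The identity \eqref{supd} (equivalently \eqref{sup}) is then recorded afterwards as a known consequence, and the paper explicitly remarks that its argument is ``essentially a restructuring of the proof of the equivalence of these two definitions''---which is precisely your route run in the opposite direction: you establish \eqref{supd} first, by the same zero/successor/limit case analysis with Remark \ref{lim} and commutativity to arrange $\xi_0 \leq \rho_0$, and then derive minimality by a short induction on pairs ordered componentwise (an ordering the paper itself mentions, just after the proposition, as the elegant way to run the recursion). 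What your factorization buys is a cleaner separation of concerns: all the normal-form work is isolated in the single identity \eqref{supd}, the minimality induction becomes two lines, and \eqref{supd} directly exhibits $\alpha \+ \beta$ as the rank of $(\alpha,\beta)$ in $\On \times \On$, which is the viewpoint the paper needs later for Theorem \ref{thmgamr}. What the paper's inlined version buys is that it never has to state \eqref{supd} as a separate lemma, and it makes visible a small refinement that your uniform use of strict monotonicity of $\oplus$ hides: in the limit case only \emph{weak} monotonicity of $\oplus$ is needed. The one caveat is that your proof of \eqref{supd} is sketched rather than carried out, but the sketch names the right tools and the missing details are exactly the computations the paper performs, so there is no gap in substance.
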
 

 \begin{proof}
It is well-known that $\+$ is strictly monotone.

On the other hand, let $\oplus$ be any operation
on the ordinals which is strictly monotone. We will prove by induction
on the value of $\alpha \+ \beta $ that 
$ \alpha \+ \beta \leq \alpha \oplus \beta  $.

The base step $\alpha \+ \beta =0$ is straightforward, since 
$0$ is the smallest ordinal.

Suppose that  $\alpha \+ \beta $ is a successor ordinal, hence
either $\alpha$ or $\beta$ is a successor ordinal,
say $\alpha= S(\alpha ')$, where 
$S$ denotes \emph{successor}.
Since 
$\alpha' \+ \beta < S (\alpha' \+ \beta)= \alpha \+ \beta $,
we get
$\alpha' \+ \beta \leq \alpha' \oplus \beta < \alpha \oplus \beta$,
where the first inequality follows from the inductive
assumption and the second inequality follows from strict monotonicity
of $\oplus$. But then $\alpha \+ \beta =  S (\alpha' \+ \beta) \leq  \alpha \oplus \beta$.
The case when $\alpha$ is limit and $\beta$ is successor is treated
in a symmetric  way.

Suppose now that $\alpha \+ \beta $ is limit,
hence both $\alpha$ and $\beta$ are limit
(unless either $\alpha$ or $\beta$ is $0$, but in this
case the result is
straightforward).

Let  $\alpha  =
 \omega ^ {\xi_h} r_h + 
\dots
+ \omega ^ {\xi_0}r_0$  
and
$ \beta  =
 \omega ^ { \rho _k} s_k + 
\dots
+ \omega ^ { \rho _0}s_0$  
 be the expressions of $\alpha$ and $\beta$ 
in Cantor normal form.
Suppose 
that $\xi_0 \leq \rho_0 $,
the other case is treated in a symmetrical way.
By Remark \ref{lim},   
$\alpha \+ \beta = \sup \{ \alpha' \+ \beta ,  \mid  
\alpha' < \alpha \} $. 
By strict monotonicity of $\+$, we have that
$\alpha' \+ \beta  < \alpha \+ \beta $,
for  $\alpha' < \alpha $, 
hence we can apply the inductive hypothesis, getting
 $\alpha' \+ \beta \leq \alpha' \oplus \beta  $,
for  $\alpha' < \alpha $.
By the above equations and by  monotonicity of 
$\oplus$, we get 
$\alpha \+ \beta = 
\sup \{ \alpha' \+ \beta  \mid  
\alpha' < \alpha  \} 
\leq
\sup \{ \alpha' \oplus \beta  \mid  
\alpha' < \alpha  \} 
\leq
\alpha \oplus \beta$.

Note that in the limit case we have  used only weak monotonicity
of $\oplus$.
 \end{proof}

The existence of a smallest strictly monotone operation
on the ordinals can be proved directly. 
 If $\Gamma$  is a set of ordinals, 
let $\sup ^+ \Gamma $ be defined as 
$\sup \Gamma$,
if $\Gamma$ has no maximum,
and as 
$(\sup \Gamma) + 1$, if  $\Gamma$ has a maximum
(the definition is intended in the sense that $\sup^+ \emptyset =0$).
In other words, 
$\sup ^+ \Gamma $
is the smallest ordinal strictly larger than all the
elements of $\Gamma$.

Then strict monotonicity of some operation $\oplus$ on the ordinals 
implies
$\alpha \oplus \beta \geq
\sup ^+ \{ \alpha' \oplus \beta, \alpha \oplus \beta' \mid
\alpha' <  \alpha, \beta' < \beta  \} $.
But it is clear that the condition
\begin{equation}\labbel{sup} \tag{NS} 
\alpha \oplus \beta =
\sideset{}{^+}\sup  \{ \alpha' \oplus \beta, \alpha \oplus \beta' \mid
\alpha' <  \alpha, \beta' < \beta  \}
  \end{equation}    
gives a recursive definition 
of some operation $\oplus$,
which is therefore the smallest strictly monotone operation.
We can perform the induction in the definition 
\eqref{sup} by recursion on
$(\max \{ \alpha, \beta  \},\min \{ \alpha, \beta  \} )$,
ordered lexicographically.  
More elegantly, we can perform the recursion  on pairs
of ordinals, ordered componentwise, that is, 
$( \alpha, \beta ) \leq ( \alpha', \beta ')  $ 
if and only if $\alpha \leq \alpha '$ and $\beta \leq \beta '$.
We can also perform the induction this way since this ordering on pairs
is well-founded.
Put in another way, the Hessenberg natural sum of two ordinals
$\alpha$ and $\beta$ is the rank of the pair
$(\alpha, \beta)$ in the well-founded  partial order 
${\On} \times {\On}$.

In fact, it is known 
(and it also follows from the above arguments) that
equation \eqref{sup} 
gives an alternative definition 
of the Hessenberg natural sum; e.~g., Conway \cite{C};
this is also implicit from
 the proof of  de Jongh and  Parikh \cite[Theorem 3.4]{dJP}. 
The proof of Proposition \ref{minnat}
is then essentially a restructuring of the proof
of the equivalence of these two definitions of the Hessenberg natural sum. 

Since $\+$ turns out to be associative, there is no
difficulty in providing a generalization of 
Proposition \ref{minnat} holding for
 $n$-ary operations, with
$n$  finite.

On the other hand, the above ideas, as they stand, cannot be used in order to provide
an infinitary generalization of the Hessenberg natural sum, as we show in the following
 elementary remark.
  
Recall that we compare ordinal operations
in the following way: 
an operation $\oplus $ is \emph{smaller} than another operation 
$\oplus'$ if $ \alpha \oplus \beta \leq \alpha \oplus' \beta  $,
for every $\alpha$ and $\beta$. The above definition makes sense
for every operation defined on a partially ordered class,
but, except for some examples and problems,   
 here we will consider only operations defined
on the class of the ordinals (hence \emph{strictly monotone} necessarily means
\emph{strictly increasing}, so there is no ambiguity. 
We will always mean \emph{monotone} in the sense of
\emph{nondecreasing}, in other words, increasing,  not necessarily 
strictly).

\begin{remark} \labbel{no}    
 There is no strictly increasing (on each argument) and
everywhere defined infinitary operation on
 $ \omega$-indexed sequences of ordinals.
This is witnessed, for example, by the following sequences:
\begin{align*}      
&(2,2,2,2,2,\dots),
\\ 
&(1,2,2,2,2,\dots),
\\ 
&(1,1,2,2,2,\dots),
\\ 
&(1,1,1,2,2,\dots),
\dots
 \end{align*}
Any strictly increasing infinitary operation, when applied
to the above sequences, would produce a descending chain of ordinals.  

 Actually, the argument shows that there is 
no everywhere defined strictly increasing function
which associates  ordinals to infinite sequences
of elements taken from a partially ordered set
having at least two comparable elements.

(b)
As above, assume that we have an everywhere 
defined infinitary operation on
 $ \omega$-indexed sequences of ordinals.

If we impose the quite natural requirement that
a summand with value $0$ does not influence the infinitary sum, we get
that the sequences $(2,2,2,2,2,\dots)$   and $(0,2,2,2, \allowbreak 2,\dots)$  
should receive the same value, so that strict monotonicity is lost.
In passing, note also that if just weak monotonicity holds, 
then $(1,2,2,2,2,\dots)$  will receive the same value, too.

This argument has little to do with ordinals in particular:
it applies to every partially ordered set with an element
$0$ and  further comparable elements. 

We do not even need  the
full assumption that the infinitary operation
is everywhere defined:
it is enough to assume that it is defined for
just one constant sequence in which the same nonzero element ($2$
in the above example) repeats.

(c) Even more generally, assume that $P$ is a partially ordered
set and we have a partially defined operation $S$ 
on countable sequences of elements of $P$.

Assume further that $0$ is a \emph{neutral element} for  $S$.
This means that, for every sequence $\sigma$ with $0$ at the $i$th place,
$S( \sigma ) $ is defined if and only if 
$S( \tau  ) $ is defined, where $\tau$ is the subsequence of $\sigma$ 
obtained by removing the $0$ at the $i$th place
(so the summand at the $i+1$th place shifts to the $i$th place).
Moreover, we require that, if defined, both sequences are given
the same value.

Under the above assumptions,
if the sum of some sequence 
$(a_0, a_1, a_2, \allowbreak \dots)$ is defined
and  $0 < a_0 \leq a_1 \leq a_2 \leq \dots $, 
then $S$ cannot be
both monotone on each argument and strictly monotone
on the first argument.
Indeed, since $0$ is neutral,
 $(a_0, a_1, a_2, \dots)$ is evaluated in the same way as
$(0,a_0, a_1, a_2, \dots)$,
contradicting the required monotonicity 
properties\footnote{Note an asymmetry here: we may indeed
have strict monotonicity on each argument when
$a_0 \geq a_1 \geq a_2 \geq \dots $. Consider, for example,
real numbers with the classical notion of converging
series. Of course, series are not everywhere defined.}.

(d) In addition to the assumptions of (c),
assume further that the infinitary sum is \emph{invariant under permutations},
that is, the sum of the elements of some sequence is defined
if and only if the sum of any permutation of the elements
of the sequence is defined, and, if this is the case, the two
sums are equal.

Suppose that the sum of some sequence is defined
and  $0 < a_{i_0} \leq a_{i_1} \leq a_{i_2} \leq \dots $, 
for certain elements of the sequence, where $i_0, i_1, i_2, \dots$ 
are distinct indexes, appearing in any order.
Then the sum is not strictly monotone on each argument.
Argue as in (c), replacing $a_{i_0}$ with $0$,
replacing   $a_{i_1}$ with $a_{i_0}$ and so 
on\footnote{Assuming only that the sum $S$ is invariant under permutations,
if some sum is defined and two comparable elements, call them $1$ and $2$, 
both occur infinitely many times in a sequence whose sum is defined,
then $S$ is not strictly monotone. Just replace one occurrence of $1$ by $2$;
the resulting sum should be the same, since $S$ is invariant and $1$ and $2$ 
both occur infinitely many times.}.
   
\end{remark}

 See Remark \ref{perforza} below and
\cite[Ch. 5]{Wo} for further elaborations. 

We now return to the special case of
some everywhere defined infinitary sums
of sequences of ordinal numbers.
By the  arguments in Remark \ref{no},
if some infinitary sum is weakly monotone, invariant under permutations,
$0$ is a  neutral element, 
and some ordinal $\alpha$ appears infinitely many times in the sequence,
then all the occurrences of ordinals $< \alpha$ 
do not influence the outcome  of  the sum. 
Moreover,  if $\beta \leq \alpha _i$, for infinitely many $i < \omega$,
then an occurrence of 
$\beta$ does not influence the infinitary sum.  

In view of Remark \ref{no}, if we want to generalize the Hessenberg natural sum
to sequences of length $ \omega$, 
we have to relax the assumption of strict monotonicity\footnote{Of
course, there is also the possibility of working in some
class larger than the ordinals, but in the present work we will 
generally confine ourselves
to infinitary extensions of the Hessenberg natural
 sum which lie within the class of the ordinals.
Another possibility is to consider sums which are only partially
defined, that is, defined only for some special class
of sequences. The examples in Remark \ref{no}
provide negative results even in such more general settings. 
  We refer again to 
\cite{Wo} for a discussion of incompatibilities
for desirable properties of infinitary sums.},   
while, of course,
we would like to retain weak monotonicity.
The hope remains to maintain strict monotonicity
when limited to 
elements of the sequence 
which are  bounded above just by finitely many other
elements of the sequence. It turns out that this
can be actually accomplished and that the resulting
operation satisfies some good and natural properties.

\section{Some preliminary definitions and a lemma} \labbel{prel} 

 Unexplained notions and notations are from \cite{Bac,Je,w,Sier}.
From now on, operations are total, that is, everywhere defined.

\begin{remark} \labbel{nobis}    
Recall from Remark \ref{no}
that there is no strictly increasing (on each argument) and
everywhere defined infinitary operation on
 $ \omega$-indexed sequences of ordinals, as
 witnessed by the following sequences:
\begin{equation*}      
(2,2,2,2,2,\dots),\ \ 
(1,2,2,2,2,\dots),\ \ 
(1,1,2,2,2,\dots),\ \ 
(1,1,1,2,2,\dots),
\dots
 \end{equation*}
 \end{remark}

\begin{definition} \labbel{spec}   
Turning to formal definitions, we compare infinitary operations
on $ \omega$-indexed sequences of ordinals by saying that
$\osum$ is \emph{smaller} than $\osum'$ if 
$\osum _{i < \omega} \alpha_{i} \leq
\osum' _{i < \omega} \alpha_{i}$,
for every sequence $( \alpha_i) _{i < \omega} $ 
of ordinals.

If $\bm \alpha = ( \alpha_i) _{i < \omega} $ is a sequence of ordinals, 
we let 
$ \varepsilon _{ \bm  \alpha} $  be the least ordinal $\varepsilon$ 
such that $\{ i < \omega \mid \varepsilon \leq  \alpha _i\}$ is finite.
Thus there are only finitely many 
\emph{e-special elements} of the sequence which
are $ \geq  \varepsilon _{ \bm  \alpha} $, and, for every
 $\varepsilon' <   \varepsilon _{ \bm  \alpha}  $,
there are infinitely many elements of the sequence
which are  $\geq\varepsilon'$. 

It will be notationally convenient to introduce another invariant.
If $  \bm  \alpha = ( \alpha_i) _{i < \omega} $ is a sequence of ordinals, 
we let 
$ \zeta _{ \bm  \alpha} $  be the least ordinal $\zeta$ 
such that $\{ i < \omega \mid \zeta <  \alpha _i\}$ is finite.
Here the definition involves a strict inequality.
The summands $ \alpha _i$
such that $ \zeta _{ \bm  \alpha}  <  \alpha _i$ will be called   
\emph{z-special 
elements} (relative to the sequence under consideration).
\end{definition}

\begin{remark} \labbel{determ}
 Let $\bm \alpha $ be a sequence of ordinals
and $\varepsilon_ {\bm \alpha} $, $\zeta_ {\bm \alpha} $ be
defined as above. 

(a) By definition, $\varepsilon_ {\bm \alpha}  > 0$, for every
infinite sequence of ordinals. 
    
(b) Note that $\zeta_ {\bm \alpha} $ is determined by $\varepsilon_ {\bm \alpha} $. Indeed, if 
$\varepsilon_ {\bm \alpha} $ is a successor ordinal, then 
$\zeta_ {\bm \alpha} $ is the predecessor 
of $ \varepsilon_ {\bm \alpha} $
(and there are infinitely many $ i < \omega$ such that
$\alpha_i = \zeta_ {\bm \alpha} $). 

(c) On the other hand, if $\varepsilon_ {\bm \alpha} $ is a limit ordinal, then 
$\zeta_ {\bm \alpha} =\varepsilon_ {\bm \alpha} $.

(d) In particular $\zeta_ {\bm \alpha}  \leq \varepsilon_ {\bm \alpha} $ always,
and every z-special element is e-special.

(e) In detail, if
$\varepsilon_ {\bm \alpha} $ is successor, then
the sets of z-special and e-special 
elements coincide.

(f)  
If $\varepsilon_ {\bm \alpha} $ is limit, then
the e-special not z-special  elements
 of the sequence are those $\alpha_i$ 
which are exactly $\varepsilon_ {\bm \alpha} $ (possibly, there is
no such element; in any case, the set of such elements is finite). 

(g) Note also that if $\zeta_ {\bm \alpha} $ is limit,
then there might be possibly infinitely many
$\alpha_i$ equal to $\zeta_ {\bm \alpha} $ (in this case
$ \varepsilon _{\bm \alpha }=\zeta_{\bm \alpha }+1$), but, possibly,
there might be only finitely many, possibly, none,
such elements (and in this case
$ \varepsilon _{\bm \alpha }=\zeta_{\bm \alpha }$).

(h) In particular, if  $\zeta_ {\bm \alpha} $ is limit, the value
of $ \varepsilon _ {\bm \alpha} $ is not determined by $\zeta_ {\bm \alpha} $.
On the other hand, if $\zeta_ {\bm \alpha} $ is successor,
then necessarily $ \varepsilon _{\bm \alpha }=\zeta_{\bm \alpha }+1$.
\end{remark}

Recall that we assume that operations are everywhere defined.  

\begin{definition} \labbel{defop}    
We say  that some 
infinitary operation $\osum$
on $ \omega$-indexed sequences of ordinals is
\emph{(weakly) monotone} if,
for all sequences  $\bm \alpha = ( \alpha_i) _{i < \omega} $,
$\bm \beta  = ( \beta _i) _{i < \omega} $ 
of ordinals, 
  \begin{equation}
\labbel{m}
\text{
 $\osum _{i < \omega } \alpha_i \leq \osum_{i < \omega } \beta _i  $,
   whenever 
 $\alpha_i \leq \beta_i $ for every $  i < \omega$.} 
\end{equation}   

We say that $\osum$
is
\emph{strictly monotone on  e-special elements}      
 if, for all $\bm \alpha $,
$\bm \beta   $,
\begin{equation} \labbel{s2}  \tag{2.2e}
\begin{aligned}   
&\text{$\osum _{i < \omega }\alpha_i \allowbreak <
\allowbreak \osum_{i<\omega} \beta_i  $,
 whenever 
$\alpha_i \leq \beta_i $, for every $  i < \omega$,
and there}
\\
&\text{is at least one $ \bar{\imath} \in \omega$
such that  
$ \varepsilon_ {\bm \alpha}  \leq  \beta _{\bar{\imath}}  $
and $ \alpha _{\bar{\imath}}  
<  \beta _{\bar{\imath}} $,} 
\end{aligned}
\end{equation}

equivalently, assuming monotonicity (\ref{m}),
\begin{equation} \labbel{s2'}    \tag{2.3e}
\begin{aligned} 
&\text{$\osum _{i < \omega } \alpha_i <  \osum_{i < \omega } \beta _i  $, 
whenever}  
\\
&\text{$(\alpha_i) _{i < \omega } $ and
$( \beta _i) _{i < \omega} $ 
are sequences of ordinals
which differ}  
\\
&\text{only at  the $\bar{\imath} ^{th}$-place,
$ \varepsilon_ {\bm \alpha}  \leq   
\beta _{\bar{\imath}} $
and $\alpha _{\bar{\imath}}  
<  \beta _{\bar{\imath}} $.} 
 \end{aligned}
\end{equation} 

Moreover, we say that $\osum$ is
\emph{strictly monotone on  z-special elements}   if,
 for all $\bm \alpha $,
$\bm \beta   $,
\begin{equation}\labbel{s3} 
\begin{aligned}
&\text{$\osum _{i < \omega } \alpha_i 
\allowbreak  < \allowbreak  \osum_{i < \omega } \beta _i  $,
 whenever $\alpha_i \leq \beta_i $, for every $  i < \omega$,
and there} 
\\
&\text{is at least one $ \bar{\imath} \in \omega$
such that  
$ \zeta_ {\bm \alpha}  <  \beta _{\bar{\imath}}  $
and $ \alpha _{\bar{\imath}}  
<  \beta _{\bar{\imath}} $,}
 \end{aligned}
\end{equation} 

equivalently, assuming monotonicity (\ref{m}),
\begin{equation} \labbel{s3'}    
\begin{aligned}
&\text{$\osum _{i < \omega } \alpha_i <  \osum_{i < \omega } \beta _i  $, 
whenever $(\alpha_i) _{i < \omega } $ and
$( \beta _i) _{i < \omega} $ are sequences of}
\\
&\text{ordinals
which differ only at the $\bar{\imath} ^{th}$-place and
$ \zeta_ {\bm \alpha} \leq 
\alpha _{\bar{\imath}}   <  
\beta _{\bar{\imath}} $}.
 \end{aligned}\end{equation} 
\end{definition}

\begin{remark} \labbel{primo}
(a) Assuming \eqref{m}, we have that
\eqref{s2}  and \eqref{s2'} 
 above are indeed  equivalent.
First, \eqref{s2} plainly implies  \eqref{s2'}. On the other hand,
under the assumptions in \eqref{s2}, pick a single 
$ \bar{\imath}$ satisfying the assumption
and let
  $\gamma _{\bar{\imath}} =  
 \beta   _{\bar{\imath}}  $
and
 $\gamma_i = \alpha  _i$, if $ i \not=\bar{\imath}$. 
Then 
$\osum _{i < \omega } \alpha_i 
< ^{\eqref{s2'}}
  \osum_{i < \omega } \gamma  _i  
\leq ^{ \text{\eqref{m}}}
  \osum_{i < \omega } \beta  _i  
$, where the superscript in $  < ^{\eqref{s2'}} $
means that we are applying Clause \eqref{s2'} etc. 

(b) An  analogous remark applies to 
\eqref{s3} and \eqref{s3'}.
Just observe that, assuming  
\eqref{m}, 
the conditions 

\ \ \ (i) $ \zeta_ {\bm \alpha}  <  \beta _{\bar{\imath}}  $
and $ \alpha _{\bar{\imath}}  
<  \beta _{\bar{\imath}} $, 

\ \ \  (ii) $ \zeta_ {\bm \alpha} \leq 
\alpha _{\bar{\imath}}   <  
\beta _{\bar{\imath}} $ 

\noindent are interchangeable both in \eqref{s3} and \eqref{s3'}.
Indeed, (ii)  implies (i), so that if, say, 
\eqref{s3} holds under the assumption (i),
then \eqref{s3} holds under the assumption (ii). On the other hand, 
if (i) holds, set
$  \alpha  ^* _{\bar{\imath}} =   \max \{ \zeta_ {\bm \alpha} ,
  \alpha  _{\bar{\imath}}  \}$ and 
 $ \alpha  ^*_i = \alpha  _i$, if $ i \not=\bar{\imath}$. 
Then (ii) holds with respect to  $ \alpha ^*_i$;
moreover,  $\osum _{i < \omega } \alpha  _i 
\leq  \osum_{i < \omega } \alpha ^* _i  $
by \eqref{m}. If
\eqref{s3} holds under the assumption (ii),
then $\osum _{i < \omega } \alpha^*  _i 
<  \osum_{i < \omega } \beta  _i  $, hence also
$\osum _{i < \omega } \alpha  _i 
<  \osum_{i < \omega } \beta  _i  $, so that 
 \eqref{s3} holds under the assumption (i).

Now we get that \eqref{s3} and \eqref{s3'}
are equivalent arguing as in (a).

(b*) There is a subtle difference between clauses 
\eqref{s2} and \eqref{s3}.
By  Remark \ref{determ}(b), 
if $\varepsilon_ {\bm \alpha} $ is a successor ordinal, then
$\varepsilon_ {\bm \alpha}  = \zeta_ {\bm \alpha}  + 1$, hence in this case 
$ \varepsilon _ {\bm \alpha}    \leq  \beta _{\bar{\imath}}  $
if and only if 
 $ \zeta_ {\bm \alpha}   <  \beta _{\bar{\imath}}  $,
thus if $\varepsilon_ {\bm \alpha} $ is a successor ordinal,
then \eqref{s2} and \eqref{s3} give the same condition.
On the other hand, we will see in Remark \ref{snote} that, in general, 
\eqref{s2} and \eqref{s3} are not equivalent.  

(c) However, \eqref{s2} does imply \eqref{s3};
indeed, if $\varepsilon_ {\bm \alpha} $ is a limit ordinal,
then $\zeta_ {\bm \alpha}  =\varepsilon_ {\bm \alpha} $,
by  Remark \ref{determ}(c). 
If the assumptions in \eqref{s3} hold, namely, 
$ \zeta_ {\bm \alpha}  <  \beta _{\bar{\imath}}  $
and $ \alpha _{\bar{\imath}}  
<  \beta _{\bar{\imath}} $, then 
$    \varepsilon_ {\bm \alpha} = \zeta_ {\bm \alpha} 
 <  \beta _{\bar{\imath}}  $, hence we can apply
\eqref{s2} in order to get 
$\osum _{i < \omega } \alpha_i <  \osum_{i < \omega } \beta _i  $.
\end{remark}

\begin{remark} \labbel{e}
If some infinitary operation $\osum$
satisfies \eqref{s2}, then  
$\osum$ is \emph{strictly monotone on the characteristic
$\varepsilon$},
namely, for every pair $\bm \alpha = ( \alpha_i) _{i < \omega} $,
$\bm \beta  = ( \beta _i) _{i < \omega} $ 
of sequences of ordinals, 
if $\varepsilon_ {\bm \alpha } < \varepsilon_ {\bm \beta } $ 
and 
$\alpha_i \leq \beta _i$, for every $i < \omega$,
then   
$\osum _{i < \omega } \alpha_i <  \osum_{i < \omega } \beta _i  $.
 
Indeed, by the definition of $\varepsilon_ {\bm \beta }$,
for every $\varepsilon' < \varepsilon_ {\bm \beta }$,
 there are infinitely many $i < \omega$ such that 
$ \varepsilon ' \leq \beta_i$,
in particular, there 
are infinitely many $i < \omega$ such that 
$ \varepsilon_ {\bm \alpha  } \leq \beta_i$,
since $\varepsilon_ {\bm \alpha  }< \varepsilon_ {\bm \beta }$,
by assumption. On the other hand,   
by the definition of $\varepsilon_ {\bm \alpha }$,
there are only finitely many $\alpha_i$
which are $\geq\varepsilon_ {\bm \alpha  }$.
Thus there is at least one ${\bar{\imath}} < \omega$
(in fact, infinitely many ${\bar{\imath}}$) such that 
$\alpha_{\bar{\imath}}< \beta _{\bar{\imath}}$ and   
$ \varepsilon_ {\bm \alpha  } \leq \beta_{\bar{\imath}}$.
By \eqref{s2}, $\osum _{i < \omega } \alpha_i <  \osum_{i < \omega } \beta _i  $.

Similarly,
 if some infinitary operation $\osum$
satisfies \eqref{s3}, then  
$\osum$ is \emph{strictly monotone on $\zeta$}.
\end{remark}   

As we are going to see in the proof of the next theorem, 
it is not difficult to show  that there is
at least 
one (and hence also the smallest) monotone $ \omega$-ary operation on ordinals
which is both weakly monotone
and
strictly monotone on  e-special elements
(hence also strictly monotone on z-special elements,
by Remark \ref{primo}(c)).

\begin{thm26a} \labbel{thm}
  \begin{enumerate}[(A)] 
   \item   
 There is the smallest $ \omega$-ary operation  $\sumn$ 
on  ordinals among
those operations $\osum$ which are both weakly monotone
and
strictly monotone on  e-special elements.

Namely, there is the smallest operation satisfying Clauses 
\eqref{m} and \eqref{s2} in Definition \ref{defop}.

\item
 There is the smallest operation  $\sums$ 
on $ \omega$-indexed sequences of ordinals among
those operations $\osum$ which are both weakly monotone
and
strictly monotone on  z-special elements.
That is,  there is the smallest operation satisfying Clauses 
\eqref{m} and \eqref{s3} in Definition \ref{defop}.
\end{enumerate}

Both $\sumn$ and 
$\sums$ are invariant under permutations.  
\end{thm26a}

Before we go on, some remarks are in order.  

We have stated Theorem \ref{thm}.6.a
in the above form for simplicity, but the statements could cause some foundational
issues, since we are dealing with operations which are proper classes.
However, the issues can be solved in a number of standard ways,
for example, working
in some set theory in which quantification over classes is allowed,
or restricting the domain of all the operations 
under consideration to sequences whose members are taken 
from a sufficiently large
\emph{set} of ordinals. Even if we are 
supposed to be working in a ``pure'' 
theory of sets, we point out that we are 
eventually going to provide an
explicit definition of 
$\sums$, hence the corresponding statement in Theorem \ref{thm}.6.a
can be reformulated as theorem schemata---one schema for 
each operation $\osum$ satisfying \eqref{m} and \eqref{s3}---and 
asserting that $\sums$ is smaller than 
such a  $\osum$.

We first present a simple but indirect proof of 
Theorem \ref{thm}.6.a. The operation $\suml$ introduced in the following
proof is extremely much larger than necessary, 
in particular, extremely much larger  than the minimal 
operations given by Theorem \ref{thm}.6.a. One of
these operations
will be explicitly described in Definition \ref{simpldef}.
Moreover, $\suml$ appears to be a really weird operation;
however, it 
greatly simplifies the proof.

\begin{proof}
If 
$  \bm  \alpha = ( \alpha_i) _{i < \omega} $ is a sequence of ordinals, let
\begin{equation*} 
\suml_{i < \omega}  \alpha_i = \aleph_{  \varepsilon  _{ \bm  \alpha}} \+
\alpha _{i_1} \+ \dots \+ 
\alpha _{i_h}
 \end{equation*}     
where $\{ \alpha _{i_1}, \dots,
\alpha _{i_h}\}$  is the (possibly empty, finite by definition)
set of e-special elements of $\bm  \alpha$.
If there is no e-special element we simply put 
$\suml_{i < \omega}  \alpha_i = \aleph_{ \varepsilon  _{ \bm  \alpha} }$.

We now show that 
$\suml$ is
(weakly) monotone.
Suppose that  $\bm  \beta $ is larger than $\bm  \alpha$.
If $  \varepsilon  _{ \bm  \alpha} =   \varepsilon  _{ \bm  \beta } $,
then  
$\suml \bm  \alpha \leq \suml \bm  \beta $
by monotonicity of the Hessenberg natural sum.

Otherwise, 
$ \varepsilon_ {\bm \alpha} <  \varepsilon_ {\bm \beta} $.
In this case let 
 $\alpha_{j_0}, \dots $
enumerate the set of those elements of
$\bm  \alpha$ such that 
$ \varepsilon _ {\bm \alpha}  \leq \alpha_{j_0} < \varepsilon _ {\bm \beta} $
and let 
 $\alpha_{k_0}, \dots $
enumerate the set of those elements such that 
$ \varepsilon _ {\bm \beta}  \leq \alpha_{k_0}  $
(again, one or possibly both sets might be empty).
In particular, 
 $\alpha_{j_0}, \dots, \alpha_{k_0}, \dots  $
enumerates the set of the e-special elements with respect to  
$\bm  \alpha$. Hence
\begin{equation*}   
\begin{aligned}   
\suml _{i < \omega} \alpha_{i} 
=
\aleph_{ \varepsilon_ {\bm \alpha}  } \+
  \alpha_{j_0} \+ \dots \+  \alpha  _{j_h} \+  
\alpha  _{k_0} \+ \dots \+ \alpha  _{k_\ell}
<
\aleph_{ \varepsilon_ {\bm \beta}  } \+
\beta   _{k_0} \+ \dots \+ \beta   _{k_\ell} \leq 
\suml _{i < \omega} \beta _{i}
\end{aligned}     
  \end{equation*}
since the Hessenberg natural sum of a finite sequence of ordinals
$< \aleph_{ \varepsilon_ {\bm \beta}  }$ is still
$< \aleph_{ \varepsilon_ {\bm \beta}  }$; moreover, 
 $\alpha  _{k_0} \leq \beta   _{k_0}$, \dots, by assumption.
There might be further elements $\beta_j$ which contribute to the
sum   $\suml _{i < \omega} \beta _{i}$; anyway the last inequality holds,
 since $\beta   _{k_0}$ does contribute to  $\suml _{i < \omega} \beta _{i}$,
because $ \varepsilon _ {\bm \beta}  \leq \alpha_{k_0}  \leq \beta _{k_0}  $
and similarly for $\beta _{k_1}  $, \dots 

By strict monotonicity of the Hessenberg natural sum, $\suml$ is
strictly monotone on e-special elements, using \eqref{s2'}. Note that,
under the assumptions in \eqref{s2'}, 
it might happen that $a_{ \bar{\imath}}$  
  is not e special but $b_{ \bar{\imath}}$ is
e-special.
As we remarked
in Remark \ref{determ}(a),  $\varepsilon_ {\bm \beta} >0$,
hence, in the above situation,    $b_{ \bar{\imath}} 
\geq \varepsilon_ {\bm \beta} > 0$, so that  $b_{ \bar{\imath}}$
does contribute to a strict increment of the value of  $\suml$.
Since we have showed that $\suml$ is strictly monotone on e-special elements,
it is also strictly monotone on z-special elements,
by Remark \ref{primo}(c). 

We have proved that 
there is at least one operation which is strictly monotone on e-special elements
and strictly monotone on z-special elements.
This is enough to prove the theorem.
Indeed, under the usual foundational caution,
once we can prove that there exists at least one operation satisfying,
say, Clauses  \eqref{m} and \eqref{s2} in Definition  \ref{defop}, 
we get that there is necessarily 
the  smallest 
operation satisfying the clauses.
Indeed, if $ \{ \osum^f  \mid f \in F \}  $
is a nonempty family of operations satisfying \eqref{m} and \eqref{s2},
then, for every sequence $( \alpha_i) _{i < \omega} $,  we can define
$ \osum _{i < \omega}  \alpha_i  =
 \inf \{\osum ^f_{i < \omega}  \alpha_i   \mid f \in F \} $
and $\osum$ satisfies \eqref{m} and \eqref{s2}, too
(the strict inequality is preserved, since there is no
infinite decreasing sequence of ordinals).

Since Clauses 
\eqref{m} - \eqref{s3} do not depend on the ordering of the sequence,
by symmetry, the operations, being the minimal ones, 
are invariant under permutations. 
\end{proof}

\begin{def26b} \labbel{hprod}   
We will also use a ``natural'' product $\x$. It is frequently named
after Hessenberg, but many authors, e.~g. \cite{Car}, attribute it to 
Hausdorff.   In order to avoid any possible misattribution,
we will call it the \emph{H-natural product}. 
Recall that the H-natural product on ordinals can be defined 
by setting $ \omega ^{ \xi_i} \x \omega ^{ \rho _j} = \omega ^{ \xi_i \+  \rho _j} $
and extending the product on normal forms by distributivity.
Of course, we define  the product to be  $0$ if some factor is $0$.    
We will need the following lemma. 
 \end{def26b}

\begin{lemma} \labbel{lemdelkatze}
If  $  \varepsilon $, $  \delta $
are ordinals and $n< \omega$,
then $   (\varepsilon \x \omega ) \+ ( \delta  \x n) \leq
( \varepsilon + \delta ) \x \omega $.  
 \end{lemma}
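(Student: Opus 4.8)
The plan is to work throughout with Cantor normal forms and to exploit that the finite factor $n$ cannot raise exponents, whereas the factor $\omega$ on the right-hand side does. First I would dispose of the degenerate cases: if $\delta=0$ or $n=0$ the right-hand side is $\ge \varepsilon\x\omega$ by weak monotonicity of $\x$, while the left-hand side equals $\varepsilon\x\omega$; if $\varepsilon=0$ the claim reduces to $\delta\x n\le\delta\x\omega$, again by monotonicity of $\x$. So assume $\varepsilon,\delta>0$ and $n\ge 1$, and let $\mu$ denote the leading exponent of $\delta$ in Cantor normal form.

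The two computational facts I would record at the outset are: multiplying on the right by $\omega=\omega^{1}$ increments every exponent of a normal form by $\+1=+1$ while leaving the coefficients unchanged (so $\gamma\x\omega=\sum_p\omega^{\eta_p+1}t_p$ if $\gamma=\sum_p\omega^{\eta_p}t_p$), whereas multiplying on the right by the finite $n$ leaves every exponent unchanged and multiplies each coefficient by $n$. In particular all exponents occurring in $\delta\x n$ are $\le\mu$, so the \emph{key bound} is
\[
\delta\x n<\omega^{\mu+1}.
\]
This is exactly where finiteness of $n$ is used; for $n$ replaced by $\omega$ the inequality of the lemma fails (take $\varepsilon=1$, $\delta=\omega$, where $\varepsilon\x\omega\+\delta\x\omega=\omega^{2}+\omega>\omega^{2}=(\varepsilon+\delta)\x\omega$).

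Next I would split $\varepsilon\x\omega$ at the level $\mu+1$: write $\varepsilon\x\omega=H+L$, where $H$ collects the terms of exponent $\ge\mu+1$ and $L$ those of exponent $\le\mu$ (both possibly $0$; note no exponent of $\varepsilon\x\omega$ can fall strictly between). Since $H$ lies entirely above $L$, this ordinary sum coincides with the natural sum, so by commutativity and associativity of $\+$,
\[
(\varepsilon\x\omega)\+(\delta\x n)=H\+\bigl(L\+(\delta\x n)\bigr).
\]
Now $L$ and $\delta\x n$ both have all exponents $\le\mu$, hence so does $L\+(\delta\x n)$, giving $L\+(\delta\x n)<\omega^{\mu+1}$; and as every exponent of $H$ is $\ge\mu+1$, the displayed natural sum collapses to an ordinary sum, whence
\[
(\varepsilon\x\omega)\+(\delta\x n)=H+\bigl(L\+(\delta\x n)\bigr)\le H+\omega^{\mu+1}
\]
by weak monotonicity of ordinary addition on the right. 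It is essential that $L$ is joined to $H$ by \emph{ordinary} addition here: $L$ records exactly the terms of $\varepsilon$ that get absorbed when forming $\varepsilon+\delta$, and the naive estimate $(\varepsilon\x\omega)\+\omega^{\mu+1}$ would instead retain them and can exceed the right-hand side.

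It then remains to show $H+\omega^{\mu+1}\le(\varepsilon+\delta)\x\omega$, and here I would avoid a full case analysis of $\varepsilon+\delta$ by using only the coarse estimate $\varepsilon+\delta\ge\varepsilon_{\ge\mu}+\omega^{\mu}$, where $\varepsilon_{\ge\mu}$ is the truncation of $\varepsilon$ to exponents $\ge\mu$. This holds because the leading block of $\delta$ contributes at least one copy of $\omega^{\mu}$ at that level (its leading coefficient being $\ge1$), while all terms of $\varepsilon$ of exponent $<\mu$ are absorbed. Applying the monotone operation $\x\omega$ and using its increment-by-one description yields $(\varepsilon_{\ge\mu}+\omega^{\mu})\x\omega=H+\omega^{\mu+1}$, so $(\varepsilon+\delta)\x\omega\ge H+\omega^{\mu+1}$, closing the chain. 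The main obstacle is the bookkeeping around the level $\mu$: one must handle uniformly the subcase in which $\mu$ already occurs as an exponent of $\varepsilon$ (so that $H$ carries a coefficient $c\ge1$ at $\omega^{\mu+1}$) and the subcase in which it does not, and one must check that the absorbed low terms of $\varepsilon$ are precisely those kept in $L$; the estimate $\varepsilon+\delta\ge\varepsilon_{\ge\mu}+\omega^{\mu}$ is what lets me sidestep these subcases cleanly.
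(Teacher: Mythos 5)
Your proof is correct, and it is organized genuinely differently from the paper's. Both arguments live entirely in Cantor normal form and pivot on the leading exponent of $\delta$ (your $\mu$, the paper's $\rho_k$), and your key bound $L \+ (\delta \x n) < \omega^{\mu+1}$ is the same estimate the paper extracts in its main case, where the ``junk'' term $A$ together with $\omega^{\rho_k}(s_k n + r_{\ell-1}+1)$ is squeezed below $\omega^{\rho_k}\omega$. The structural difference is this: the paper computes $\varepsilon+\delta$ \emph{exactly}, which forces a three-way case split according to where $\rho_k$ sits among the exponents of $\varepsilon$ (namely $\xi_\ell \geq \rho_k > \xi_{\ell-1}$, or $\rho_k > \xi_h$, or $\xi_0 \geq \rho_k$), and in the main case it finishes with an explicit chain of natural-sum inequalities term by term. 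You never compute $\varepsilon+\delta$ at all; you only lower-bound it by the single intermediate ordinal $\varepsilon_{\geq\mu}+\omega^{\mu}$, verify the clean identity $(\varepsilon_{\geq\mu}+\omega^{\mu})\x\omega = H+\omega^{\mu+1}$, and then transport the bound through weak monotonicity of $\x$. This collapses the paper's three cases into one uniform argument: when $\rho_k>\xi_h$ your $H$ is empty, and when $\xi_0\geq\rho_k$ your $L$ is empty, which are exactly the paper's ``much easier'' remaining cases. What the paper's route buys is complete self-containment -- it uses nothing beyond the normal-form arithmetic of $\+$ and $\x$; what your route buys is uniformity, brevity, and a sharp isolation of where finiteness of $n$ enters (your key bound, whose failure for $n$ replaced by $\omega$ you illustrate with the same counterexample $\varepsilon=1$, $\delta=\omega$ that the paper records immediately after the lemma), at the modest price of invoking weak monotonicity of the H-natural product, a standard fact the paper does not need here, though it is in the spirit of facts used elsewhere in the text.
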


 \begin{proof}
The conclusion is straightforward  if either 
$\varepsilon=0$ or $\delta=0$.

 Otherwise,  
let  $ \varepsilon   =
 \omega ^ {\xi_h} r_h + 
\dots
+ \omega ^ {\xi_0}r_0$  
and
$ \delta  =
 \omega ^ { \rho _k} s_k + 
\dots
+ \omega ^ { \rho _0}s_0$  
 be expressed
in Cantor normal form.
First suppose that 
$ \xi _\ell \geq \rho_k > \xi _{ \ell -1} $
for some $\ell$ with $ h \geq \ell \geq 1$.
Then 
$\varepsilon' = \varepsilon + \delta =
 \omega ^ {\xi_h} r_h + 
\dots
+ \omega ^ {\xi_\ell}r_\ell +
 \omega ^ { \rho _k} s_k + 
\dots
+ \omega ^ { \rho _0}s_0
$ and
$ \omega ^ { \rho _k} \geq \omega ^ {\xi_{\ell-1}+1}$; moreover,
$ \omega ^ { \rho _k} > \omega ^ { \rho _{k-1}} s _{k-1}n
 \+ \dots \+  \omega ^ { \rho _{0}} s_0 n 
\+  \omega ^ {\xi_{\ell-2}+1} r _{\ell-2}  \+ \dots \+  \omega ^ {\xi_0+1}r_0$.
Let $A$ denote the expression on the right
in the latter inequality; note
that $A$  might be an empty sum.
 Hence 
\begin{align*}
 (\varepsilon \x \omega ) \+ ( \delta  \x n)   
&=
  \omega ^ {\xi_h+1} r_h \+ 
\dots
\+ \omega ^ {\xi_0+1}r_0
\+ \omega ^ { \rho _k} s_k n \+ 
\dots
\+ \omega ^ { \rho _0}s_0 n 
\\
  &=  \omega ^ {\xi_h+1} r_h \+ 
\dots
\+ \omega ^ {\xi_\ell+1}r_\ell 
\+ \omega ^ { \rho _k} s_k n  
\+  \omega ^ {\xi_{\ell-1}+1}r_{\ell-1} \+ A 
\\
  & <\omega ^ {\xi_h+1} r_h \+ 
\dots
\+ \omega ^ {\xi_\ell+1}r_\ell 
\+ \omega ^ { \rho _k} s_k n  
\+  \omega ^ { \rho _k}r_{\ell-1} \+ 
  \omega ^ { \rho _k}
\\
 &= \omega ^ {\xi_h+1} r_h \+ 
\dots
\+ \omega ^ {\xi_\ell+1}r_\ell 
\+ \omega ^ { \rho _k} (s_k n +  r_{\ell-1} + 1)
\\
  &< \omega ^ {\xi_h+1} r_h \+ 
\dots
\+ \omega ^ {\xi_\ell+1}r_\ell 
\+ \omega ^ { \rho _k} \omega 
\\
  &\leq   \omega ^ {\xi_h+1} r_h \+ 
\dots
\+ \omega ^ {\xi_\ell+1}r_\ell 
\+ \omega ^ { \rho _k +1} s_k 
\+ \dots \+ 
\omega ^ { \rho _{0} +1} s_{0}
=
\varepsilon ' \x \omega, 
  \end{align*}
since $s_k >0$. 

The remaining cases are much easier.
If $ \rho_k > \xi _h $, then
$ \varepsilon ' =\varepsilon + \delta = \delta $,
$ \rho_k \geq \xi _h +1$, hence
$\varepsilon \x \omega  < \delta (r_h + 1)
\leq  \delta \x (r_h + 1)$. 
If $ \xi _0 \geq \rho_k $,
then
$ \varepsilon ' = \varepsilon + \delta = \varepsilon \+ \delta  $.
In both cases the conclusion  is straightforward. 
 \end{proof}    

Note that if $\varepsilon ' = \varepsilon + \delta $,
then it is not necessarily the case that 
$   (\varepsilon \x n ) \+ ( \delta  \x \omega ) \leq
\varepsilon' \x \omega $.  
For example, take $\varepsilon=1$
and $\delta =  \omega $,
then
$ (\varepsilon  \x n) \+ ( \delta  \x \omega) =
\omega^2 + n >
\omega ^2 =
\varepsilon' \x \omega $,
for $n>0$.    A fortiori, it is
not necessarily the case that 
$   (\varepsilon \x  \omega ) \+ ( \delta  \x \omega ) \leq
\varepsilon' \x \omega $. 
Note that if $\delta>0$,
then the proof shows that the inequality in Lemma \ref{lemdelkatze}
is strict (we will not need this in what follows).

\section{A strictly monotone operation} \labbel{simp}

Rather than the Cantor normal form, 
in what follows it will be convenient to use the 
(additive) normal form. Simply, the 
additive normal form is obtained from the Cantor normal form
by factoring out the integer coefficients and, conversely,
from the additive normal form we obtain the Cantor normal form
by grouping together all summands corresponding to the same
power of $ \omega$. 
In detail, every nonzero ordinal 
$ \zeta $ 
can be written uniquely
 in \emph{normal form} as $ \omega ^ {\xi_s}  + 
\dots
+ \omega ^ {\xi_1}
+ \omega ^ {\xi_0}$, with
$\xi_s \geq \xi _{s-1} \geq \dots \geq \xi_0  $. 

If $\zeta >0$ is written in normal form as above, 
we let $\hat \zeta =  \omega ^ {\xi_s}  + 
\dots
+ \omega ^ {\xi_1}$, namely, we remove
the last summand.
The convention is intended in the sense that if 
$\zeta=\omega ^ {\xi_0}$ has only one summand in 
normal form, then $\hat \zeta = 0$. 
In the next definition we will use $\hat \zeta $ only in the case
when $\zeta$ is a limit ordinal, thus $\xi_0 >0$;
actually, in most cases already
 $\xi_0 $ will be limit.

\begin{definition} \labbel{simpldef}
Suppose that 
$\bm \alpha = ( \alpha_i) _{i < \omega} $ 
is a sequence of ordinals and recall the definitions of 
$ \zeta_ {\bm \alpha}   $ and 
$ \varepsilon_ {\bm \alpha}  $
from Definition \ref{spec}.
Enumerate the finite  (possibly empty) set
of e-special elements of  
$( \alpha_i) _{i < \omega} $ as 
$\alpha _{i_1}, \dots,  \alpha _{i_h} $.
Let $\alpha _{i_1}^\heartsuit$
be the unique ordinal such that   
$ \zeta_ {\bm \alpha} + \alpha _{i_1}^\heartsuit = \alpha _{i_1}$ 
and similarly for $ \alpha _{i_2}^\heartsuit$ \dots\  
 If $\zeta_ {\bm \alpha} >0$, let $\alpha _{i_1}^\diamondsuit$
be the unique ordinal such that   
$\hat \zeta_ {\bm \alpha}  + \alpha _{i_1}^\diamondsuit = \alpha _{i_1}$ 
and similarly for $ \alpha _{i_2}^\diamondsuit$ \dots
 
Recall that, by the very definition, 
 there are 
only a finite number of e-special elements, and,
by Remark \ref{determ}(d),  they are 
all $\geq \zeta_ {\bm \alpha}  $. In particular, if $h \geq 1$, the ordinals  
 $\alpha _{i_1}^\heartsuit, 
 \alpha _{i_2}^\heartsuit, \dots,
\alpha _{i_1}^\diamondsuit, \alpha _{i_2}^\diamondsuit, \dots$ 
actually exist.

If $\zeta_ {\bm \alpha} >0$, 
express $\zeta_ {\bm \alpha} $ in additive normal form as
$\zeta_ {\bm \alpha}  =
 \omega ^ {\xi_s}  + 
\dots
+ \omega ^ {\xi_1}
+ \omega ^ {\xi_0}$.  
Note that 
$ \hat \zeta_ {\bm \alpha} + \omega ^ {\xi_0} +  \alpha _{i_1}^\heartsuit
= \zeta_ {\bm \alpha} + \alpha _{i_1}^\heartsuit = \alpha _{i_1}=
\hat \zeta_ {\bm \alpha}  + \alpha _{i_1}^\diamondsuit$,
hence  
\begin{equation}\labbel{cuorqua}   
 \omega ^ {\xi_0} + \alpha _{i_1}^\heartsuit = 
\alpha _{i_1}^\diamondsuit
   \end{equation} 
since ordinal addition is left cancellative.

Under the above assumptions, 
define $\sums _{i < \omega} \alpha_{i}$ as 
\begin{equation} \labbel{prima} \tag{S1}         
\begin{aligned}
\sums _{i < \omega} \alpha_{i} &=
(\hat \zeta_ {\bm \alpha}  \x \omega)  \+ \omega ^ {\xi_0}
 \+ \alpha _{i_1}^\diamondsuit \+ \dots \+ 
\alpha _{i_h}^\diamondsuit 
\\&
=
 (\omega ^ {\xi_s+1}  + 
\dots
+ \omega ^ {\xi_1+1}
  + \omega ^ {\xi_0})
 \+ \alpha _{i_1}^\diamondsuit \+ \dots \+ 
\alpha _{i_h}^\diamondsuit 
\end{aligned}
 \end{equation}     
if $\zeta_ {\bm \alpha} >0$, the last exponent $\xi_0$ in the normal form 
of $\zeta_ {\bm \alpha} $ is a limit ordinal and there are only a finite number
of elements of $\bm \alpha$ which are $\geq \zeta_ {\bm \alpha}  $, namely,
$\zeta_ {\bm \alpha} = \varepsilon_ {\bm \alpha}  $.  
Note that there might be a finite number of $\alpha_i$
such that $\alpha_i = \zeta_ {\bm \alpha}$ and in this case 
$\alpha _{i}^\diamondsuit$ does contribute to the sum. 

Otherwise, let
\begin{equation}\labbel{seconda}     \tag{S2}    
\begin{aligned}   
\sums _{i < \omega} \alpha_{i} &=
(\zeta_ {\bm \alpha}  \x \omega ) \+ \alpha _{i_1}^\heartsuit \+ \dots \+ \alpha _{i_h}^\heartsuit 
\\
&=
 (\omega ^ {\xi_s+1}  + 
\dots
+ \omega ^ {\xi_1+1}
  + \omega ^ {\xi_0+1})
\+ \alpha _{i_1}^\heartsuit \+ \dots \+ \alpha _{i_h}^\heartsuit 
\end{aligned}     
  \end{equation}

If $\zeta_ {\bm \alpha} =0$, we apply \eqref{seconda}, namely,
 $\sums _{i < \omega} \alpha_{i}=
\alpha _{i_1}^\heartsuit \+ \dots \+ \alpha _{i_h}^\heartsuit=
\alpha _{i_1}\+ \dots \+ \alpha _{i_h}$.

The definitions are intended in the sense that 
$\sums _{i < \omega} \alpha_{i} =
(\hat \zeta_ {\bm \alpha} \x \omega)  \+ \omega ^ {\xi_0}  $, respectively,
 $\sums _{i < \omega} \alpha_{i} =
\zeta_ {\bm \alpha} \x \omega $ in case the set of e-special elements is empty.
The sum $\sums 0$ of the constantly $0$ 
sequence is 	$0$. 

Note that in case \eqref{seconda} there might be summands 
$\alpha _{i_p}$ such that $\alpha _{i_p}^\heartsuit=0$,
so that they do not contribute to the sum. We have maintained them
for uniformity with the case \eqref{prima}. See item (b)   
 in the following remark  for more details.
\end{definition}   

\begin{remark} \labbel{ze} 
(a) When expressing an ordinal in normal form,
we can equivalently use ordinal addition or Hessenberg natural addition, hence
in the formulas \eqref{prima} and \eqref{seconda}
we can everywhere replace the $+$ signs with
$\+$ signs (on the other hand, the H-natural product $\x$ cannot
generally  be replaced
by ordinal product).  

(b) By Remark \ref{determ}(e),
the sets of z-special and of  e-special 
elements are the same
when $\varepsilon_ {\bm \alpha} $ is successor.
 When $\varepsilon_ {\bm \alpha} $ is limit, summands with value 
$\varepsilon_ {\bm \alpha} $  are the only 
e-special not z-special elements.
Moreover, if $\varepsilon_ {\bm \alpha} $ is limit, 
then $\zeta_ {\bm \alpha} = \varepsilon_ {\bm \alpha}  $,
thus if $\alpha_i = \varepsilon_ {\bm \alpha}  $, then 
  $\alpha _{i}^\heartsuit =0$. 
This means that in case \eqref{seconda}
we can equivalently do  with only z-special elements, rather than
 with e-special elements (in any case, always
using the $\alpha _{i_j}^\heartsuit $s, not the $\alpha _{i_j}^\diamondsuit $s). 
In other words, there might be 
e-special elements which are not z-special (only when 
$\varepsilon_ {\bm \alpha} $ is limit) but 
in case \eqref{seconda} such elements  
do not contribute to the sum  $\sums _{i < \omega} \alpha_{i}$.
On the other hand, such elements 
do contribute in case \eqref{prima}, hence in case 
\eqref{prima}  we actually need
to deal with e-special elements.
 \end{remark}

 \begin{ex32a} \labbel{ex}
We first present some examples. 
For the sake of simplicity, we might write
$ \alpha_0 + \alpha_1 + \alpha_2 + \dots $
for $\sums _{i < \omega} \alpha_{i} $.

(a) For example, 
$ 1 + \omega + \omega^ \omega + 2 +2+2 + \dots =
 \omega^ \omega + \omega 3 $.
Here $\zeta_ {\bm \alpha} =2$, $\varepsilon_ {\bm \alpha} = 3$, 
the summand $1$ does not contribute to the sum since 
$1 < \varepsilon _ {\bm \alpha}  $,
the e-special elements are      
$\omega $ and $  \omega^ \omega$,
clause \eqref{seconda} applies, 
the infinite sequence  of $2$'s contributes 
 $ \omega 2$ to the sum, which adds to the contribution by $ \omega$
to get $ \omega 3$.
Here  $ \alpha _1^ \heartsuit = \omega $,
since $\zeta_ {\bm \alpha} + \omega   =
 2+ \omega = \omega = \alpha _1$.
Similarly, 
$ \alpha _2^ \heartsuit = \omega^ \omega  $.

(b) As another example, 
$ \omega + \omega 5 + \omega^ \omega + 
\omega 2 + \omega 2+ \omega 2 + \dots =
 \omega^ \omega + \omega^ 2  2 + \omega 3  $.
Here $\zeta_ {\bm \alpha} = \omega 2$, $\varepsilon_ {\bm \alpha} = \omega 2 +1$, 
$ \omega $ does not contribute to the sum,
the e-special elements are      
$ \omega 5$ and  $  \omega^ \omega$ and
$( \omega 5)^ \heartsuit = \omega3 $,
 $( \omega^ \omega )^ \heartsuit = \omega^ \omega $.

(c) As another sum to which \eqref{seconda} applies,  
$ \omega + \omega 5 + \omega^ \omega + 
1+  2 + 3+ 4 + 5+ \dots = \omega^ \omega + \omega^ 2  + \omega 4$.
Here $\zeta_ {\bm \alpha} = \varepsilon_ {\bm \alpha} =\omega $
and \eqref{seconda} applies, since the exponent of $ \omega$ 
is $1$, not a limit ordinal.  
The  e-special elements are      
$ \omega$, $ \omega 5$ and  $  \omega^ \omega$, but 
$ \omega$ does not contribute to the sum, since
$ \omega^ \heartsuit = 0 $.

(d) Now we check 
$  \omega^ \omega + \omega^ {\omega ^ \omega }+
\omega  + \omega^ 2+ \omega ^3 + \omega ^4+\omega ^5+\dots =
\omega^ {\omega ^ \omega }+
 \omega^ \omega  2 $.
Here $\zeta_ {\bm \alpha} = \varepsilon_ {\bm \alpha}  =\omega ^ \omega $
and \eqref{prima} applies, since the exponent of $ \omega ^ \omega $ 
is a limit ordinal.  
The  e-special elements are      
$\omega^ \omega $ and  $  \omega^ {\omega ^ \omega }$;
 $\omega^ \omega $ does contribute to the sum, since
$\hat \zeta_ {\bm \alpha} =0$.  

(e) Finally, if $\alpha_0 =\omega^ \omega + \omega $,
$\alpha_1 =\omega^ \omega +  \omega^ \omega $,
$\alpha_2 =\omega^ \omega + \omega^ \omega + \omega  $,
$ \alpha _3  =  \omega^ {\omega ^ \omega } +2$ and
$ \alpha _i =\omega^ \omega   + \omega^ i $, for $i \geq 4$, then
$\sums _{i < \omega} \alpha_{i} =  
\omega^ {\omega ^ \omega }+
 \omega^ {\omega+1} +  \omega^ \omega  3 + \omega + 2$.
Here $\zeta_ {\bm \alpha} = \varepsilon_ {\bm \alpha}  =\omega ^ \omega +
\omega ^ \omega$, thus $\alpha_0$ is not e-special,    
 \eqref{prima} applies again,
$\hat \zeta_ {\bm \alpha}  =\omega ^ \omega $,
$ \alpha_1^ \diamondsuit = \omega^ \omega  $,
$ \alpha_2^ \diamondsuit = \omega^ \omega  + \omega $,
 $ \alpha_3^ \diamondsuit= \omega^ {\omega ^ \omega }+ 2$.
 
Further examples will be presented in the course of the
proof of Theorem \ref{oc}.
\end{ex32a}

\begin{example} \labbel{meglvw}
(a) Recall from the introduction that 
V\"a\"an\"anen and  Wang \cite{VW}
used an infinitary Hessenberg natural sum
in order to define some notion of ``size''
or complexity of an $\mathcal L _{ \omega _{1}, \omega }$-formula.
In more detail,  they set inductively
$ s ( \varphi ) =1$ if $\varphi$  is an atomic formula and, say,
$s( \varphi \wedge \psi ) = s( \varphi) \+ s(\psi ) $ and 
$s \left( \bigvee _{i <  \omega } \varphi _i \right) =
\sumh _{i <  \omega } s(\varphi _i)  $.
Recall that $\sumh _{i <  \omega }$ is computed by taking the supremum
of the partial finite Hessenberg natural sums.    
 
In particular, according to \cite{VW},
if each $\varphi _{ij} $ is atomic, then 
the three infinitary formulas $ \psi_0 =\bigvee _{i <  \omega } \varphi _{i,0} $,
$\psi_1 = \bigvee _{i <  \omega } (\varphi _{i,0}  \wedge \varphi _{i,1})$ and
$\psi_3 = \bigvee _{i <  \omega } (\varphi _{i,0}  \wedge \varphi _{i,1} 
\wedge \dots \wedge \varphi _{i,i})$
 all have the same ``size'' $ \omega$.

On the other hand let us  define $s^*$ in a similar way, but setting 
$s^* \left( \bigvee _{i <  \omega } \varphi _i \right) =
\sums _{i <  \omega } s^*(\varphi _i)  $, instead.
We still get $s^*(\psi_0) = \omega $, but
$s^*(\psi_1) = \omega + \omega $ and 
$s^*(\psi_1) = \omega^2 $.
Thus, using $ \sums $, we get a much finer evaluation of ``sizes'',
since, under any reasonable idea of complexity,
$\psi_1$ and  $\psi_2$ are more compex than $\psi_0$.

(b) Similarly, we can consider a well-founded tree
without infinite branches and such that every node
has at most $ \omega$ children. Then set inductively $s^*(n)=1$,
for every terminal node $n$, and  
$s^*(n)=1 \+ \left(  \sums_ m s^*(m)\right) $,
for $n$ nonterminal, 
where $m$ ranges over the children of $n$.  
Then the \emph{size of the tree}  is defined to be the size 
$s^*(r)$ 
of the root $r$.
It is conceivable that the above definition will have further
applications, besides the related notion used in \cite{VW}.
 \end{example}

\begin{theorem} \labbel{simpth}
The operation $\sums$ 
 is both weakly monotone and 
strictly monotone on z-special elements.
It is the smallest operation which is
strictly monotone on z-special elements.

The operation $\sums$ is invariant
under insertion of any number of $0$'s.

If $\alpha_i = \alpha $, for every $i < \omega$,
then   $\sums _{i < \omega} \alpha_{i} =
\alpha  \x \omega $.
\end{theorem}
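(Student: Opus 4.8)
The plan is to establish the five assertions roughly in order of increasing difficulty, isolating the two monotonicity claims and the minimality claim as the substantial ones. I would begin with two structural observations that underlie everything. First, the invariants are weakly monotone: if $\alpha_i\le\beta_i$ for all $i$, then the threshold sets $\{i:\zeta<\alpha_i\}$ and $\{i:\varepsilon\le\alpha_i\}$ can only grow, so $\zeta_{\bm\alpha}\le\zeta_{\bm\beta}$ and $\varepsilon_{\bm\alpha}\le\varepsilon_{\bm\beta}$. Second, when two e-special (or z-special) elements share a common base, left cancellation of ordinal addition turns $\alpha_i\le\beta_i$ into $\alpha_i^\heartsuit\le\beta_i^\heartsuit$ (and likewise for ${}^\diamondsuit$). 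With these in hand the constant-sequence formula is a direct computation: for $\alpha>0$ one gets $\zeta_{\bm\alpha}=\alpha$ and $\varepsilon_{\bm\alpha}=\alpha+1$, so $\varepsilon_{\bm\alpha}$ is a successor, there are no e-special elements, clause \eqref{seconda} applies, and $\sums_{i<\omega}\alpha_i=\zeta_{\bm\alpha}\x\omega=\alpha\x\omega$ (the case $\alpha=0$ being trivial). Invariance under insertion of $0$'s is just as quick: since $\varepsilon_{\bm\alpha}>0$ always (Remark \ref{determ}(a)), a value $0$ belongs to no set $\{i:\varepsilon\le\alpha_i\}$ with $\varepsilon>0$ and to no set $\{i:\zeta<\alpha_i\}$, so $\zeta_{\bm\alpha}$, $\varepsilon_{\bm\alpha}$, the finite multiset of e-special elements, and the choice between \eqref{prima} and \eqref{seconda} are all unchanged; hence so is $\sums$, which depends only on this data and is permutation-invariant by Theorem \ref{thm}.6.a.

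For strict monotonicity on z-special elements I would reduce to the single-coordinate form \eqref{s3'}, legitimate by Remark \ref{primo}(b). The point of the reduction is that changing one coordinate perturbs each threshold set by at most one index, hence preserves finiteness; thus $\zeta_{\bm\alpha}=\zeta_{\bm\beta}$, $\varepsilon_{\bm\alpha}=\varepsilon_{\bm\beta}$, and $\bm\alpha,\bm\beta$ fall under the same clause. Writing $\zeta$ for the common value, the hypothesis $\zeta\le\alpha_{\bar{\imath}}<\beta_{\bar{\imath}}$ forces $\beta_{\bar{\imath}}>\zeta$, so $\beta_{\bar{\imath}}$ is e-special and its reduced contribution $\beta_{\bar{\imath}}^\heartsuit$ (resp.\ $\beta_{\bar{\imath}}^\diamondsuit$) is strictly larger than that of $\alpha_{\bar{\imath}}$ (which may be $0$, if $\alpha_{\bar{\imath}}=\zeta$ in clause \eqref{seconda}); every other summand is fixed, so strict monotonicity of the finite Hessenberg natural sum $\+$ gives $\sums\bm\alpha<\sums\bm\beta$.

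Weak monotonicity \eqref{m} is the analytic core, and here I would work through the auxiliary operations obtained by applying the right-hand sides of \eqref{seconda} and \eqref{prima} unconditionally, say $\suma$ and $\sumb$ (the latter only where $\zeta_{\bm\alpha}>0$). The plan is: (i) show $\suma$ is weakly monotone; (ii) show $\sumb$ is weakly monotone on its domain; (iii) show $\sumb\bm\alpha\le\suma\bm\alpha$ always, which follows from \eqref{cuorqua} together with the estimate $\omega^{\xi_0}(n+1)<\omega^{\xi_0+1}$ (a special case of Lemma \ref{lemdelkatze}); and (iv) assemble the result for $\sums$. In (i) and (ii) the split is on the leading term: if $\zeta_{\bm\alpha}=\zeta_{\bm\beta}$ the reduced e-special elements increase and extra e-special elements of $\bm\beta$ only add summands, so monotonicity of $\+$ suffices; if $\zeta_{\bm\alpha}<\zeta_{\bm\beta}$, writing $\zeta_{\bm\beta}=\zeta_{\bm\alpha}+\delta$, the e-special elements of $\bm\alpha$ lying below $\zeta_{\bm\beta}$ have reduced parts bounded by $\delta$, so their finite Hessenberg sum is at most $\delta\x n$ and Lemma \ref{lemdelkatze} collapses $(\zeta_{\bm\alpha}\x\omega)\+(\delta\x n)$ into $\zeta_{\bm\beta}\x\omega$, while the finitely many e-special elements of $\bm\alpha$ reaching $\zeta_{\bm\beta}$ are matched with the corresponding elements of $\bm\beta$ on the right. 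For (iv), by (i) and (iii) the only case not immediate from $\suma\bm\alpha\le\suma\bm\beta$ is that $\bm\alpha$ uses \eqref{seconda} while $\bm\beta$ uses \eqref{prima}; but then $\varepsilon_{\bm\beta}=\zeta_{\bm\beta}$, and were $\zeta_{\bm\alpha}=\zeta_{\bm\beta}$, then (its last exponent being a limit) $\bm\alpha$ could avoid \eqref{prima} only with $\varepsilon_{\bm\alpha}=\zeta_{\bm\alpha}+1>\varepsilon_{\bm\beta}$, contradicting $\varepsilon_{\bm\alpha}\le\varepsilon_{\bm\beta}$; so necessarily $\zeta_{\bm\alpha}<\zeta_{\bm\beta}$, and a jump estimate places $\suma\bm\alpha$ below the leading block $\hat\zeta_{\bm\beta}\x\omega\+\omega^{\xi_0}$ of $\sumb\bm\beta$ — here the hypothesis that the last exponent $\xi_0$ of $\zeta_{\bm\beta}$ is a limit is exactly what forces any overflow of $\zeta_{\bm\alpha}$ beyond $\hat\zeta_{\bm\beta}$ to collapse below $\omega^{\xi_0}$ after multiplication by $\omega$.

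Finally, for minimality I would show $\sums\bm\alpha\le\osum\bm\alpha$ for every $\osum$ satisfying \eqref{s3}, by induction on the ordinal $v=\sums\bm\alpha$. To force $\osum\bm\alpha\ge v$ it suffices to prove $\osum\bm\alpha>\gamma$ for each $\gamma<v$, and for this I would produce a sequence $\bm\alpha'\le\bm\alpha$ with $\gamma\le\sums\bm\alpha'<v$ admitting a z-special increment back up to $\bm\alpha$: the inductive hypothesis gives $\osum\bm\alpha'\ge\sums\bm\alpha'\ge\gamma$, and \eqref{s3} then gives $\osum\bm\alpha>\osum\bm\alpha'\ge\gamma$. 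The combinatorial content is that the values $\sums\bm\alpha'$, as $\bm\alpha'$ ranges over sequences below $\bm\alpha$ obtained by a single z-special lowering, are cofinal in $v$; equivalently, $\sums$ satisfies the rank-style equation $\sums\bm\alpha=\sup^+\{\sums\bm\alpha':\bm\alpha'\text{ a z-predecessor of }\bm\alpha\}$. I expect the main obstacle to be precisely the construction of these approximating predecessors for the leading block: realizing $\zeta_{\bm\alpha}\x\omega$ (or $\hat\zeta_{\bm\alpha}\x\omega\+\omega^{\xi_0}$ in clause \eqref{prima}) as a genuine supremum of predecessor-values requires lowering infinitely many coordinates toward $\zeta_{\bm\alpha}$ in a limit-sensitive way, and it is this point — rather than the finite-tail increments, which are handled by finitely many applications of \eqref{s3} — that the rank/game reformulation and the alternative argument streamline.
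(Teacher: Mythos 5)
Your handling of weak monotonicity, strict monotonicity on z-special elements, invariance under inserting $0$'s, and the constant-sequence formula is correct and is essentially the paper's own argument: the auxiliary operations you call $\suma$ and $\sumb$ are exactly those of Definition \ref{s1s2}, your items (i)--(iv) are the paper's Lemma \ref{lemmo}, the key collapsing estimate is Lemma \ref{lemdelkatze}, and your case assembly and your one-coordinate argument for \eqref{s3'} match parts (I) and (II) of the paper's proof. (One small caution: you should not invoke Theorem \ref{thm}.6.a for permutation invariance of the explicitly defined $\sums$ --- identifying that explicit operation with the minimal operation of Theorem \ref{thm}.6.a is part of what is being proved --- but this is harmless, since invariance under inserting $0$'s follows directly from the fact that $\sums _{i< \omega } \alpha _i$ depends only on $\zeta_{\bm\alpha}$, $\varepsilon_{\bm\alpha}$ and the multiset of e-special values, none of which is changed by inserting $0$'s.)

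The minimality claim, however, contains a genuine gap. Your induction on $v=\sums_{i<\omega}\alpha_i$ has the right shape (it parallels the paper's Lemma \ref{ranklem}), but it rests entirely on the statement you yourself label ``the combinatorial content'': that the values $\sums_{i<\omega}\alpha'_i$, as $\bm\alpha'$ ranges over sequences $\le\bm\alpha$ for which the pair $(\bm\alpha',\bm\alpha)$ satisfies the hypotheses of \eqref{s3}, are cofinal in $\sums_{i<\omega}\alpha_i$; equivalently, the rank-style equation $\sums\bm\alpha=\sup^+\{\sums\bm\alpha'\}$. You assert this, name it as the main obstacle, and leave it unproved. This is not a routine step that a reader can fill in: it is where the paper does most of its work. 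In the paper it is the inequality $\sums_{i<\omega}\alpha_i\le v(\bm\alpha)$ of Theorem \ref{thmgamr}, whose proof requires the full ormf mixed-sum characterization of Theorem \ref{oc} (the basic constructions (a)--(c$'$) plus the induction with cases (i)--(v)); in the alternative direct proof of Appendix II it is the limit step, which requires constructing the approximating sequences explicitly in five separate cases according to where the smallest S-summand comes from, and verifying, via (extensions of) Remark \ref{lim}, that after lowering infinitely many coordinates the new invariants $\zeta$, $\varepsilon$, the clause selection, and the surviving $\heartsuit$/$\diamondsuit$ contributions recombine so that the resulting values of $\sums$ genuinely converge to $\sums_{i<\omega}\alpha_i$ from below while staying $\ge\gamma$. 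None of these constructions appears in your proposal, so what you have for minimality is a correct reduction to an unproved --- and in fact the hardest --- statement.
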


\begin{definition} \labbel{s1s2}
The proof of Theorem \ref{simpth}
will be simplified if we establish some properties of the two operations
defined by the conditions \eqref{prima} and \eqref{seconda}
considered alone. In detail, let 
$\suma$ be the operation defined by \eqref{seconda},
for every sequence of ordinals,
and let 
$\sumb$ be defined by \eqref{prima},
for all sequences $\bm \alpha$ of ordinals
with $\zeta_ {\bm \alpha}>0$. 
  
If  $\bm \alpha = ( \alpha_i) _{i < \omega} $
and $\bm \beta  = ( \beta _i) _{i < \omega} $  
are sequences of ordinals, let us write
$\bm \alpha \leq \bm \beta $ to mean that
$\alpha_i \leq \beta _i $ for every $i < \omega$.
Note that, if this is the case,
 then 
$\zeta_ {\bm \alpha} \leq \zeta_ {\bm \beta}$ and
$ \varepsilon_ {\bm \alpha} \leq \varepsilon_ {\bm \beta}$.
\end{definition}  

\begin{lemma} \labbel{lemmo}
Let $\bm \alpha = ( \alpha_i) _{i < \omega} $
and $\bm \beta  = ( \beta _i) _{i < \omega} $  
be sequences of ordinals.
  \begin{enumerate}[(i)]   
 \item 
If $\bm \alpha \leq \bm \beta $,
then 
$\suma _{i < \omega} \alpha_{i} \leq \suma _{i < \omega} \beta _{i} $,
that is, $\suma$ is weakly monotone.
\item
If $\bm \alpha \leq \bm \beta $
and $ \varepsilon_ {\bm \alpha} =
\varepsilon_ {\bm \beta} >1$, 
then 
$\sumb _{i < \omega} \alpha_{i} \leq \sumb _{i < \omega} \beta _{i} $.
\item
If $\zeta_ {\bm \alpha} >0$, then
$\sumb _{i < \omega} \alpha_{i} \leq \suma _{i < \omega} \alpha  _{i} $.
\item
Suppose that  $\bm \alpha \leq \bm \beta $,
$ \zeta_ {\bm \alpha}  < \zeta_ {\bm \beta}$, 
 $ \zeta_ {\bm \beta} = \omega ^ {\rho_s}  + 
\dots
+ \omega ^ { \rho _1}
+ \omega ^ {\rho_0}$ in normal form with 
$\rho_0$ a limit ordinal
and there are only a finite number
of elements of $\bm \beta $ which are $\geq \zeta_ {\bm \beta}$. 
Then 
$\suma _{i < \omega} \alpha_{i} \leq \sumb _{i < \omega} \beta   _{i} $.
  \end{enumerate} 
 \end{lemma}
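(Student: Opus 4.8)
The common toolkit will be the following. The natural sum $\+$ is commutative, associative, monotone and strictly monotone, hence cancellative ($\mu\+\nu\le\mu\+\nu'$ iff $\nu\le\nu'$); ordinary left addition is strictly monotone in its right argument, so the hearts $\alpha^\heartsuit$, $\alpha^\diamondsuit$ (the ordinals with $\zeta_{\bm\alpha}+\alpha^\heartsuit=\alpha$, resp.\ $\hat\zeta_{\bm\alpha}+\alpha^\diamondsuit=\alpha$) are monotone in $\alpha$ and antitone in the subtrahend. Two absorption facts attached to the limit exponent will be used repeatedly: if $\gamma<\omega^{\rho_0}$ and $\rho_0$ is a limit ordinal then $\gamma\x\omega<\omega^{\rho_0}$ (each exponent of $\gamma$ is $<\rho_0$, so each exponent of $\gamma\x\omega$ is still $<\rho_0$), and a $\+$-sum of finitely many ordinals all $<\omega^{\rho_0}$ is again $<\omega^{\rho_0}$. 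The quantitative engine is Lemma \ref{lemdelkatze}, always applied in the shape $(\zeta\x\omega)\+(\delta\x n)\le(\zeta+\delta)\x\omega$. Finally, by Remark \ref{ze} only the z-special elements carry nonzero $\heartsuit$-hearts, so $\suma\bm\alpha$ depends solely on $\zeta_{\bm\alpha}$ and the finitely many $\alpha_{i_j}^\heartsuit$, and $\sumb\bm\alpha$ solely on $\hat\zeta_{\bm\alpha}$, $\omega^{\xi_0}$ and the $\alpha_{i_j}^\diamondsuit$.

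For (i) set $\zeta=\zeta_{\bm\alpha}\le\zeta'=\zeta_{\bm\beta}$. When $\zeta=\zeta'$ the bases $\zeta\x\omega$ coincide, the z-special set of $\bm\alpha$ is contained in that of $\bm\beta$, and $\alpha_i^\heartsuit\le\beta_i^\heartsuit$ at each common index by cancellativity, so termwise monotonicity of $\+$ finishes it. When $\zeta<\zeta'$, split the z-special indices of $\bm\alpha$ into those with $\alpha_i\le\zeta'$ (hearts $\le\delta$, where $\zeta+\delta=\zeta'$) and those with $\alpha_i>\zeta'$ (for which $\alpha_i^\heartsuit=\delta+\alpha_i^{\heartsuit\prime}$, the prime denoting the heart relative to $\zeta'$); using $\delta+x\le\delta\+x$, gathering all the $\delta$'s and applying Lemma \ref{lemdelkatze} with $\varepsilon=\zeta$ absorbs the whole base part into $\zeta'\x\omega$, while the residual $\alpha_i^{\heartsuit\prime}$ are dominated by the $\beta_i^{\heartsuit\prime}$, giving $\suma\bm\alpha\le\suma\bm\beta$. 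Part (ii) is quick: $\zeta_{\bm\alpha}$ is determined by $\varepsilon_{\bm\alpha}$ (Remark \ref{determ}), so $\varepsilon_{\bm\alpha}=\varepsilon_{\bm\beta}$ forces $\zeta_{\bm\alpha}=\zeta_{\bm\beta}$ and hence equal bases $(\hat\zeta\x\omega)\+\omega^{\xi_0}$; one then compares the $\diamondsuit$-hearts termwise exactly as in the equal-$\zeta$ case above, the hypothesis $\varepsilon>1$ serving only to guarantee $\zeta>0$ so that $\sumb$ is defined. For (iii), on the single sequence $\bm\alpha$ insert \eqref{cuorqua}, namely $\alpha_{i_j}^\diamondsuit=\omega^{\xi_0}+\alpha_{i_j}^\heartsuit$, and $\zeta\x\omega=(\hat\zeta\x\omega)\+\omega^{\xi_0+1}$; cancelling the common summand $\hat\zeta\x\omega$ reduces \eqref{prima}${}\le{}$\eqref{seconda} to $\omega^{\xi_0}\+(\omega^{\xi_0}+\alpha_{i_1}^\heartsuit)\+\dots\+(\omega^{\xi_0}+\alpha_{i_h}^\heartsuit)\le(\omega^{\xi_0}\x\omega)\+\alpha_{i_1}^\heartsuit\+\dots\+\alpha_{i_h}^\heartsuit$, which follows from $\omega^{\xi_0}\+(\omega^{\xi_0}\x h)=\omega^{\xi_0}\x(h+1)\le\omega^{\xi_0}\x\omega$ and monotonicity of $\+$.

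The genuine difficulty is (iv), where the asymmetric form \eqref{prima} of $\sumb\bm\beta$ must be matched against \eqref{seconda} for $\bm\alpha$ across the strict jump $\zeta<\zeta'$. Write $\zeta=\zeta_{\bm\alpha}$, $\zeta'=\zeta_{\bm\beta}=\hat\zeta_{\bm\beta}+\omega^{\rho_0}$, let $m$ be the number of z-special indices of $\bm\alpha$, and split them into \emph{type I} (those $i$ with $\beta_i\ge\zeta'$, so $i$ is e-special for $\bm\beta$) and \emph{type II} (those with $\alpha_i\le\beta_i<\zeta'$). The plan is to bound each heart so that the contributions land under $\hat\zeta_{\bm\beta}\x\omega$ rather than under the too-large $\zeta'\x\omega$: with $\delta_1$ the ordinal satisfying $\zeta+\delta_1=\hat\zeta_{\bm\beta}$, one checks $\alpha_i^\heartsuit\le\delta_1\+\beta_i^\diamondsuit$ for type I and $\alpha_i^\heartsuit\le\delta_1\+y_i$ with $y_i<\omega^{\rho_0}$ for type II. Gathering the $m$ copies of $\delta_1$ together with the base and applying Lemma \ref{lemdelkatze} with $\varepsilon=\zeta$, $\delta=\delta_1$ gives $(\zeta\x\omega)\+(\delta_1\x m)\le(\zeta+\delta_1)\x\omega=\hat\zeta_{\bm\beta}\x\omega$; the residual $y_i$ $\+$-sum to something $<\omega^{\rho_0}$, hence fit beneath the explicit $\omega^{\rho_0}$ of \eqref{prima}, and the type I terms $\beta_i^\diamondsuit$ are dominated by the full $\diamondsuit$-sum of $\bm\beta$, yielding $\suma\bm\alpha\le\sumb\bm\beta$. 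The step to be handled with the most care, and which I expect to be the main obstacle, is the regime $\zeta>\hat\zeta_{\bm\beta}$ (so $\delta_1=0$): there the base $\zeta\x\omega$ already exceeds $\hat\zeta_{\bm\beta}\x\omega$, and instead of Lemma \ref{lemdelkatze} one writes $\zeta\x\omega=(\hat\zeta_{\bm\beta}\x\omega)\+(\gamma\x\omega)$ with $\hat\zeta_{\bm\beta}+\gamma=\zeta$, $\gamma<\omega^{\rho_0}$, and invokes the limit character of $\rho_0$ to get $\gamma\x\omega<\omega^{\rho_0}$, so that this excess too is swallowed by the single $\omega^{\rho_0}$; simultaneously $\zeta\ge\hat\zeta_{\bm\beta}$ collapses the type I estimate to the clean $\alpha_i^\heartsuit\le\beta_i^\diamondsuit$ and pushes every type II heart below $\omega^{\rho_0}$. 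It is exactly the hypothesis that $\rho_0$ is a limit ordinal that keeps all these absorbed pieces strictly below the $\omega^{\rho_0}\x\omega=\omega^{\rho_0+1}$ level, which is what would otherwise break the inequality.
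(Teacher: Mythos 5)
Your proposal is correct. For (i)--(iii) it is essentially the paper's own proof: the same splitting of the special indices against $\zeta_{\bm\beta}$, the same $\delta$-bounds on the hearts, the same application of Lemma \ref{lemdelkatze}, and in (iii) the same computation via \eqref{cuorqua} and $\omega^{\xi_0}\x(h+1)\le\omega^{\xi_0+1}$ (cancelling the common summand $\hat\zeta_{\bm\alpha}\x\omega$ first rather than carrying it along is cosmetic). The genuine structural difference is in (iv): the paper first forces the regime $\zeta_{\bm\alpha}\ge\hat\zeta_{\bm\beta}$ by replacing $\bm\alpha$ with an auxiliary sequence $\bm\alpha^*$ (raising to $\hat\zeta_{\bm\beta}$ every summand trapped below it) and appealing to the already proved part (i), and then runs a single computation; you instead treat the regime $\zeta_{\bm\alpha}\le\hat\zeta_{\bm\beta}$ head-on, feeding the offset $\delta_1$ (with $\zeta_{\bm\alpha}+\delta_1=\hat\zeta_{\bm\beta}$) into Lemma \ref{lemdelkatze} so that the base together with the gathered $\delta_1$'s lands under $\hat\zeta_{\bm\beta}\x\omega$, while your second regime reproduces the paper's main computation (your $\gamma$ is the paper's $\hat\delta$). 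Your route makes (iv) self-contained and independent of (i), at the cost of a second case; the paper's reduction trades that case for a dependence on (i). Two harmless inaccuracies that do not affect correctness: when $\zeta_{\bm\alpha}>\hat\zeta_{\bm\beta}$ the ordinal $\delta_1$ simply does not exist (it is not ``$=0$''), and the identity $\zeta_{\bm\alpha}\x\omega=(\hat\zeta_{\bm\beta}\x\omega)\+(\gamma\x\omega)$ should officially be claimed only as an inequality $\le$, obtained from $\zeta_{\bm\alpha}\le\hat\zeta_{\bm\beta}\+\gamma$ and distributivity of $\x$ over $\+$, which is all the argument needs (equality does happen to hold here, since $\gamma<\omega^{\rho_0}$ makes the normal form of $\zeta_{\bm\alpha}$ the concatenation of those of $\hat\zeta_{\bm\beta}$ and $\gamma$).
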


\begin{proof}
(i)
By Remark \ref{ze}(b), here in (i) we can do working just
with z-special elements. 
Let us divide the finite set 
of those indexes $i$  such that 
$ \zeta_ {\bm \alpha} < \alpha_i$
into two sets:
let $\alpha_{j_1}, \dots $
enumerate those elements such that 
$ \zeta_ {\bm \alpha} < \alpha_{j_1} \leq \zeta_ {\bm \beta} $ 
and let 
 $\alpha_{k_1}, \dots $
enumerate those elements such that 
$ \zeta_ {\bm \beta}  < \alpha_{k_1}  $.
One or possibly both sets might be empty,
in particular, the first set is necessarily empty when
$ \zeta_ {\bm \alpha} = \zeta_ {\bm \beta} $. 
Let $\delta$  be the 
unique ordinal
such that $ \zeta_ {\bm \alpha} + \delta = \zeta_ {\bm \beta} $.
In the case of the $\alpha_j$s  we have 
$ \zeta_ {\bm \alpha} + \alpha   _{j_1}^\heartsuit= \alpha   _{j_1} \leq \zeta_ {\bm \beta} 
= \zeta_ {\bm \alpha} + \delta $,
hence 
$ \alpha   _{j_1}^\heartsuit \leq \delta $. 
 In the  case of the $\alpha_k$s, we have 
$ \zeta_ {\bm \alpha} + \alpha   _{k_1}^\heartsuit= \alpha   _{k_1} \leq \beta  _{k_1} = \zeta_ {\bm \beta} +  \beta _{k_1}^\heartsuit
= \zeta_ {\bm \alpha}  + \delta  +  \beta _{k_1}^\heartsuit$, 
hence 
$\alpha   _{k_1}^\heartsuit \leq  \delta  +  \beta _{k_1}^\heartsuit$
and similarly for the other indexes. 
Here $\alpha   _{k_1}^\heartsuit $ 
is computed with respect to $\zeta_ {\bm \alpha}$, while   
$  \beta _{k_1}^\heartsuit$
is computed with respect to $\zeta_ {\bm \beta} $.
Then 
\begin{equation*}     
\begin{aligned} 
\suma _{i < \omega } \alpha_i &=
 \alpha^\heartsuit  _{j_1} \+ \dots \+  \alpha^\heartsuit  _{j_h} \+  
\alpha^\heartsuit  _{k_1} \+ \dots \+ \alpha^\heartsuit  _{k_\ell} \+
 ( \zeta_ {\bm \alpha}  \x  \omega )
 \\
&\leq (\delta \x  h) \+  (\delta  +  \beta _{k_1}^\heartsuit) \+ \dots \+
(\delta  +  \beta _{k_ \ell}^\heartsuit) \+ 
 ( \zeta_ {\bm \alpha}  \x  \omega )
 \\
&\leq (\delta \x  h) \+  \delta  \+  \beta _{k_1}^\heartsuit \+ \dots \+
\delta  \+  \beta _{k_ \ell}^\heartsuit \+ 
 ( \zeta_ {\bm \alpha}  \x  \omega )
\\
&=  \beta _{k_1}^\heartsuit \+ \dots\+
\beta _{k_ \ell}^\heartsuit  \+  ( \zeta_ {\bm \alpha}  \x  \omega )
\+ (\delta \x (h + \ell))
\\
&\leq ^{\text{Lemma \ref{lemdelkatze}}} 
\beta _{k_1}^\heartsuit \+ \dots \+
\beta _{k_ \ell}^\heartsuit \+  (\zeta_ {\bm \beta}   \x  \omega )
\leq 
\suma _{i < \omega } \beta _i 
\end{aligned}
  \end{equation*}   
where the last inequality follows from the assumption
that
$ \zeta_ {\bm \beta}  < \alpha_{k_1} $, hence 
$ \zeta_ {\bm \beta}  < \beta _{k_1} $, 
thus $\beta _{k_1}$ does contribute to the sum 
defining  $\suma _{i < \omega } \beta _i $ 
and similarly for the other indexes. 
Of course, there might be further indexes $i$ such that 
$\beta _{i}^\heartsuit$ 
contributes to $\suma _{i < \omega } \beta _i $,
but the inequality is  satisfied anyway. 

(ii) 
If $ \alpha _{i}$ is not e-special
wrt $\bm \alpha $, 
 then $ \alpha _i$
does not contribute
to the sum defining 
$\sumb _{i < \omega} \alpha_{i} $, 
while $\beta _{i} $ possibly  contributes. 
Note that the sum is defined, since
$ \varepsilon_ {\bm \alpha} >1$,
hence $ \zeta_ {\bm \alpha} >0$. 

On the other hand, if
$ \alpha _{i}$ is  e-special wrt $\bm \alpha $,
then $ \beta  _{i}$ is  e-special
wrt $\bm \beta  $,
since
$ \alpha  _{i} \leq \beta   _{i}$
and
$ \varepsilon_ {\bm \alpha}=
 \varepsilon_ {\bm \beta}  $.
Again since $ \alpha  _{i} \leq \beta   _{i}$,
 we have
$\alpha ^\diamondsuit_{i}   \leq  
\beta ^\diamondsuit_{i} $.
By (weak) monotonicity of the Hessenberg natural sum we get 
$\sumb _{i < \omega} \alpha_{i} \leq
\sumb _{i < \omega} \beta _{i} $, since, as observed in
Remark \ref{determ}(b)(c), for every sequence,
$\zeta$ is determined by $\varepsilon$,
hence  $\varepsilon_ {\bm \alpha} =
 \varepsilon_ {\bm \beta}  $
implies $ \zeta_ {\bm \alpha} = \zeta_ {\bm \beta} $.

(iii) We get the desired inequality
by observing that 
\begin{equation*}      
\begin{aligned} 
\sumb _{i < \omega} \alpha_{i} &=
 (\omega ^ {\xi_s+1}  + 
\dots
+ \omega ^ {\xi_1+1}
  + \omega ^ {\xi_0})
 \+ \alpha _{i_1}^\diamondsuit \+ \dots \+ 
\alpha _{i_h}^\diamondsuit 
\\
&= ^{\eqref{cuorqua}} (\omega ^ {\xi_s+1}  + 
\dots
+ \omega ^ {\xi_1+1}
  + \omega ^ {\xi_0})
 \+ (\omega ^ {\xi_0} + \alpha _{i_1}^\heartsuit)
 \+ \dots \+ 
(\omega ^ {\xi_0} + \alpha _{i_h}^\heartsuit)
\\
&\leq
(\omega ^ {\xi_s+1}  + 
\dots
+ \omega ^ {\xi_1+1}
  + \omega ^ {\xi_0}(h+1))
 \+  \alpha _{i_1}^\heartsuit
 \+ \dots \+ 
 \alpha _{i_h}^\heartsuit 
\\
&<
(\omega ^ {\xi_s+1}  + 
\dots
+ \omega ^ {\xi_1+1}
  + \omega ^ {\xi_0+1})
 \+  \alpha   _{i_1}^\heartsuit
 \+ \dots \+ 
 \alpha   _{i_h}^\heartsuit 
\leq 
 \suma _{i < \omega} \alpha  _{i}.
 \end{aligned} 
 \end{equation*}

(iv) 
First, we observe that we may assume that 
$\zeta_ {\bm \alpha} \geq \hat \zeta_ {\bm \beta}$,
since otherwise we can introduce another
sequence  $\bm \alpha^*$ by setting $\alpha_i^*= \hat \zeta_ {\bm \beta} $, if
$\alpha_i \leq \hat \zeta_ {\bm \beta} \leq \beta_i $ and
$\alpha_i^*= \alpha _i$, otherwise.
If $\zeta_ {\bm \alpha} < \hat \zeta_ {\bm \beta}$, then
infinitely many summands of 
$\bm \alpha^*$ are equal to $\hat \zeta_ {\bm \beta}$,
since  
infinitely many summands of 
$\bm \beta $ are  $>\hat \zeta_ {\bm \beta}$
and only finitely many summands of 
$\bm \alpha  $ are  $>\zeta_ {\bm \alpha} $,
hence only finitely many summands of 
$\bm \alpha  $ are $> \hat \zeta_ {\bm \beta}$.
Thus $\zeta_{\bm \alpha^*}= \hat \zeta_ {\bm \beta}$. 
 By the already proved item (i),
$\suma _{i < \omega} \alpha_{i} \leq \suma _{i < \omega} \alpha ^* _{i} $,
hence if we prove 
$\suma _{i < \omega} \alpha ^* _{i} \leq \sumb _{i < \omega} \beta   _{i}$,
we also get
$\suma _{i < \omega} \alpha _{i} \leq \sumb _{i < \omega} \beta  _{i}$.
 
So let us assume that 
$\zeta_ {\bm \alpha} \geq \hat \zeta_ {\bm \beta}$.
Let $\delta$ be the unique ordinal
such that 
$ \zeta_ {\bm \beta}  = 
 \zeta_ {\bm \alpha}   + \delta $
and let $\hat \delta $
be the unique ordinal
such that 
$ \hat \zeta_ {\bm \beta} + \hat \delta  = 
 \zeta_ {\bm \alpha}  $. 
Moreover,  divide the finite set 
of the elements 
$ \alpha_i$  which  are e-special wrt 
$\bm \alpha$ into the set
 $\alpha_{j_0}, \dots $
of those elements such that 
$ \alpha_{j_0} < \zeta_ {\bm \beta} $ 
and the set 
 $\alpha_{k_0}, \dots $
of those elements such that 
$ \zeta_ {\bm \beta}  \leq \alpha_{k_0}  $.
Since $\zeta_ {\bm \alpha} \geq \hat \zeta_ {\bm \beta}$,
we get $\delta \leq \omega ^ { \rho _0}$, recalling that
$ \omega ^ { \rho _s}  + 
\dots
+ \omega ^ { \rho _0}$   is the normal form of 
$\zeta_ {\bm \beta} $. 
Since   $ \alpha_{j_0} < \zeta_ {\bm \beta} $
and $\zeta_ {\bm \alpha} + \alpha_{j_0}^\heartsuit = \alpha_{j_0}$,
we have  $ \alpha_{j_0}^\heartsuit <\delta \leq \omega ^ { \rho _0}$
and similarly for the other indexes.
From  
 $ \hat \zeta_ {\bm \beta} + \hat \delta  = 
 \zeta_ {\bm \alpha}  $ and 
$ \zeta_ {\bm \alpha} <  \zeta_ {\bm \beta}$, we get
$\hat \delta < \omega ^ { \rho _0}$, hence
$\hat \delta \x \omega  < \omega ^ { \rho _0}$,
since $ \rho _0$ is limit.
Summing up,
$ \alpha_{j_0}^\heartsuit \+ \dots \+
\alpha_{j_h}^\heartsuit \+ \hat \delta \x \omega  < \omega ^ { \rho _0}$.

Since $ \zeta_ {\bm \beta}  \leq \alpha_{k_0}  \leq \beta _{k_0} $,
$\beta _{k_0} $ is e-special wrt  $\bm \beta $,
by the finiteness assumption (compare Remark \ref{determ}(g)).
Since $ \hat\zeta_ {\bm \beta} \leq \zeta_ {\bm \alpha} 
< \zeta_ {\bm \beta}  \leq \alpha_{k_0}  \leq \beta _{k_0} $,
 $\hat \zeta_ {\bm \beta} + \beta _{k_0} ^\diamondsuit = \beta _{k_0}$
and $\zeta_ {\bm \alpha} + \alpha_{k_0}^\heartsuit = \alpha_{k_0}$,
 we have $\alpha_{k_0}^\heartsuit \leq \beta _{k_0} ^\diamondsuit$ 
and similarly for the other indexes. Hence
\begin{equation*}     
\begin{aligned} 
\suma _{i < \omega } \alpha_i &=
 \alpha^\heartsuit  _{j_0} \+ \dots \+  \alpha^\heartsuit  _{j_h} \+  
\alpha^\heartsuit  _{k_0} \+ \dots \+ \alpha^\heartsuit  _{k_\ell} \+
 ( \zeta_ {\bm \alpha}  \x  \omega )
 \\
&\leq
 \alpha^\heartsuit  _{j_0} \+ \dots \+  \alpha^\heartsuit  _{j_h} \+  
\alpha^\heartsuit  _{k_0} \+ \dots \+ \alpha^\heartsuit  _{k_\ell} \+
 (\hat \zeta_ {\bm \beta}   \x  \omega ) \+ ( \hat \delta \x \omega )
\\
&< 
\beta _{k_0}^\diamondsuit \+ \dots \+
\beta _{k_ \ell}^\diamondsuit \+  
(\hat \zeta_ {\bm \beta}   \x  \omega  \+  \omega ^ { \rho _0})
\leq 
\sumb _{i < \omega } \beta _i 
\end{aligned}
  \end{equation*}   
since $  \zeta_ {\bm \alpha} =
\hat \zeta_ {\bm \beta} + \hat \delta  \leq
 \hat \zeta_ {\bm \beta} \+ \hat \delta $, then using
 distributivity of the H-natural product with respect to  the Hessenberg natural sum.
As usual, there might be more 
$\beta_i^\diamondsuit$ contributing to  
$\sumb _{i < \omega } \beta _i $, but the last inequality still holds. 
 \end{proof}

 \begin{proof}[Proof of Theorem \ref{simpth}]
It is straightforward that $\sums$ is invariant
under insertions of  $0$'s.
The last statement is immediate from the definition,
since, under the assumption, $\zeta_ {\bm \alpha}= \alpha $,
$\varepsilon_ {\bm \alpha} = \alpha +1$, thus  case \eqref{seconda} applies.
Moreover, there is no e-special element.

We now prove the first statement. 
By Remark \ref{primo}(b)
it is enough to prove that
$\sums$ satisfies 
\eqref{m} and \eqref{s3'} in Definition \ref{defop}.

(I) Let us first prove weak monotonicity \eqref{m}.
Suppose that $ \alpha _i \leq \beta _i$, for every $i < \omega$. 
The proof is divided in various cases.

If both $\sums _{i < \omega } \alpha  _i $
and $\sums _{i < \omega } \beta _i $
are given by \eqref{seconda}, then the result
is a consequence of Lemma \ref{lemmo}(i).    

If  $\sums _{i < \omega } \alpha  _i $
is given by \eqref{seconda}
and $\sums _{i < \omega } \beta _i $
is given by \eqref{prima},
then necessarily $ \zeta_ {\bm \alpha} < \zeta_ {\bm \beta} $,
since if \eqref{prima} applies to $\bm \beta $,
then only finitely many elements of $\bm \beta $
are $\geq \zeta_ {\bm \beta} $, by assumption, hence 
only finitely many elements of $\bm \alpha  $
are $\geq \zeta_ {\bm \beta} $. If 
by contradiction
$ \zeta_ {\bm \alpha} = \zeta_ {\bm \beta} $,
then \eqref{prima} should be used in computing  
$\sums _{i < \omega } \alpha  _i $, contradicting our assumption.
Hence  $ \zeta_ {\bm \alpha} < \zeta_ {\bm \beta} $
and the desired inequality follows from
Lemma \ref{lemmo}(iv), since the hypotheses there are satisfied, 
if  \eqref{prima} is used in order
to evaluate $\sums _{i < \omega } \beta _i $.

If both $\sums _{i < \omega } \alpha  _i $
and $\sums _{i < \omega } \beta _i $
are given by \eqref{prima} and 
$ \varepsilon_ {\bm \alpha}=
\varepsilon_ {\bm \beta} $,
 the result
is Lemma \ref{lemmo}(ii).    
So, let us assume that
$ \varepsilon_ {\bm \alpha} <
\varepsilon_ {\bm \beta} $.
We claim that also
$ \zeta_ {\bm \alpha} < \zeta_ {\bm \beta} $.
Indeed, 
by Remark \ref{determ}(b)(c)(d),
$\zeta_ {\bm \alpha}  \leq \varepsilon_ {\bm \alpha}  $ and 
$\varepsilon_ {\bm \beta} \leq \zeta_ {\bm \beta} +1$, 
so that if $ \varepsilon_ {\bm \alpha} <
\varepsilon_ {\bm \beta} $
and, by contradiction,  $ \zeta_ {\bm \alpha} = \zeta_ {\bm \beta} $, we have
$\zeta_ {\bm \alpha} = \zeta_ {\bm \beta} =\varepsilon_ {\bm \alpha} <
 \varepsilon_ {\bm \beta} $.
 Then, by Remark \ref{determ}(b),
 there are infinitely many elements of
$\bm \beta  $ which are equal to 
 $\zeta_ {\bm \beta}$, but this contradicts the assumption that
 $\sums _{i < \omega } \beta _i $
is evaluated using
\eqref{prima}.
Hence $ \zeta_ {\bm \alpha} < \zeta_ {\bm \beta} $ and
we get 
$\sums _{i < \omega } \alpha  _i =
\sumb _{i < \omega } \alpha  _i \leq
\suma _{i < \omega } \alpha  _i  \leq  
\sumb _{i < \omega } \beta _i =
 \sums _{i < \omega } \beta _i $
by Lemma \ref{lemmo}(iii)(iv). 
 
Finally, 
if  $\sums _{i < \omega } \alpha  _i $
is given by \eqref{prima}
and $\sums _{i < \omega } \beta _i $
is given by \eqref{seconda},
then 
$\sums _{i < \omega } \alpha  _i =
 \sumb _{i < \omega } \alpha  _i \leq 
\suma _{i < \omega } \alpha  _i \leq 
\suma _{i < \omega } \beta _i =
\sums _{i < \omega } \beta _i $
by Lemma \ref{lemmo}(iii)(i). 

(II) We now prove \eqref{s3'}.
If $\bm \alpha = ( \alpha_i) _{i < \omega} $
and $\bm \beta  = ( \beta _i) _{i < \omega} $  
are sequences which differ only at place $\bar{\imath}$, then 
$\zeta_ {\bm \alpha} = \zeta_ {\bm \beta}$ and case 
\eqref{prima}, resp., \eqref{seconda} applies to
$\bm \alpha$ if and only if it applies to $\bm \beta $. 
Suppose that $ \zeta_ {\bm \alpha}  \leq 
\alpha _{\bar{\imath}}   <  
\beta _{\bar{\imath}} $.
If  $\zeta_ {\bm \alpha}  = 
\alpha _{\bar{\imath}} $ and 
$  \varepsilon _ {\bm \alpha} = \zeta_ {\bm \alpha} +1$, 
then $\alpha_i$  is not e-special, but $\beta_i$  is
e-special, and $\beta_i$  does contribute to the sum. 
Otherwise, both $\alpha_i$ and $\beta_i$  are e-special and,
according to the case which applies,
$\alpha ^\heartsuit_{\bar{\imath}}   <  
\beta ^\heartsuit_{\bar{\imath}} $ 
or $\alpha ^\diamondsuit_{\bar{\imath}}   <  
\beta ^\diamondsuit_{\bar{\imath}} $. 
In any  case, \eqref{s3'} follows from strict monotonicity of the
(finitary) Hessenberg natural sum.
\renewcommand{\qedsymbol}{$\Box$ (to be continued)}
\end{proof}

The proof of Theorem \ref{simpth} will be completed
in Section \ref{rank}.

\begin{proposition} \labbel{ops2}
The operation $\suma$ is both (i) monotone and (ii) strictly 
monotone on z-special elements.
 \end{proposition}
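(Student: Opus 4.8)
The plan is to treat the two assertions in turn, relying on what has already been established for $\suma$. Claim (i) requires no fresh work at all: weak monotonicity of $\suma$ is exactly the content of Lemma \ref{lemmo}(i), so I would simply invoke it.

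For (ii) I would first reduce, via Remark \ref{primo}(b) together with the monotonicity just recorded, to verifying the one-place condition \eqref{s3'}. So I take sequences $\bm\alpha$ and $\bm\beta$ agreeing off a single index $\bar{\imath}$, with $\zeta_{\bm\alpha} \leq \alpha_{\bar{\imath}} < \beta_{\bar{\imath}}$. The first observation is that altering a single entry changes each tail set $\{i : \zeta < \alpha_i\}$ (and likewise each set $\{i : \varepsilon \leq \alpha_i\}$) by at most one index, hence preserves finiteness; so $\zeta_{\bm\alpha} = \zeta_{\bm\beta}$ and $\varepsilon_{\bm\alpha} = \varepsilon_{\bm\beta}$. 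Consequently the factor $\zeta_{\bm\alpha}\x\omega$ in \eqref{seconda} is common to the two sums, and every $\heartsuit$-summand arising from an index $j\neq\bar{\imath}$ is identical on both sides.

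The crux is then the comparison at the varying index. Since $\beta_{\bar{\imath}} > \alpha_{\bar{\imath}} \geq \zeta_{\bm\alpha} = \zeta_{\bm\beta}$, the element $\beta_{\bar{\imath}}$ is z-special, hence e-special by Remark \ref{determ}(d), and it contributes a summand $\beta_{\bar{\imath}}^\heartsuit$ with $\beta_{\bar{\imath}}^\heartsuit > 0$ (because $\zeta_{\bm\beta} + \beta_{\bar{\imath}}^\heartsuit = \beta_{\bar{\imath}} > \zeta_{\bm\beta}$). On the $\bm\alpha$ side I would distinguish two subcases. If $\zeta_{\bm\alpha} < \alpha_{\bar{\imath}}$, then $\alpha_{\bar{\imath}}$ is e-special and $\zeta_{\bm\alpha} + \alpha_{\bar{\imath}}^\heartsuit = \alpha_{\bar{\imath}} < \beta_{\bar{\imath}} = \zeta_{\bm\beta} + \beta_{\bar{\imath}}^\heartsuit$, whence $\alpha_{\bar{\imath}}^\heartsuit < \beta_{\bar{\imath}}^\heartsuit$ since ordinal addition is strictly increasing in its right argument. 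If instead $\zeta_{\bm\alpha} = \alpha_{\bar{\imath}}$, then $\alpha_{\bar{\imath}}^\heartsuit = 0$ and, as noted in Remark \ref{ze}(b), $\alpha_{\bar{\imath}}$ contributes $0$ to \eqref{seconda} whether or not it is e-special; again $\alpha_{\bar{\imath}}^\heartsuit = 0 < \beta_{\bar{\imath}}^\heartsuit$. In either subcase the summand at $\bar{\imath}$ strictly increases while all the remaining summands and the factor $\zeta_{\bm\alpha}\x\omega$ stay fixed, so strict monotonicity of the finitary Hessenberg natural sum $\+$ delivers $\suma_{i<\omega}\alpha_i < \suma_{i<\omega}\beta_i$, which is \eqref{s3'}.

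I do not anticipate a genuine obstacle, since the argument is a streamlined version of part (II) of the proof of Theorem \ref{simpth}: because $\suma$ applies \eqref{seconda} unconditionally, the case analysis over which of \eqref{prima}, \eqref{seconda} governs each side disappears entirely. The only point demanding care is the boundary subcase $\alpha_{\bar{\imath}} = \zeta_{\bm\alpha}$, where the $\heartsuit$-value of $\alpha_{\bar{\imath}}$ collapses to $0$; this is harmless once one observes that $\beta_{\bar{\imath}}$ lies strictly above $\zeta_{\bm\beta}$ and therefore still contributes a nonzero term.
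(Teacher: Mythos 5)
Your proof is correct and takes essentially the same approach as the paper, whose entire proof is the citation ``From Lemma \ref{lemmo}(i) and the arguments in part (II) in the proof of Theorem \ref{simpth}.'' Your write-up simply makes that citation explicit, correctly observing that for $\suma$ the part (II) argument collapses to the \eqref{seconda}-only case, with the boundary subcase $\alpha_{\bar{\imath}}=\zeta_{\bm\alpha}$ handled exactly as the paper handles it.
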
 

 \begin{proof}
From Lemma \ref{lemmo}(i) and
the arguments in part (II) 
in the proof of Theorem \ref{simpth}.  
 \end{proof}

\section{An order-theoretical characterization in terms of mixed sums} \labbel{order}

Let $I$ be a possibly infinite set.
An ordinal $\gamma$ is a \emph{mixed sum}
of some family $(\alpha_i) _{i \in I} $ 
of ordinals
 if there are 
pairwise disjoint
subsets $(A_i) _{i \in I} $ of $\gamma$
such that 
$\bigcup _{i \in I} A_i = \gamma  $
and, for every $i \in I$,
$A_i$ has order-type $\alpha_i$,
with respect  to the order induced on $A_i$ 
by $\gamma$.    
 The idea of a finite mixed sum of ordinals occurs implicitly as early 
as in 1942 in \cite{Car}; see the following theorem.
The first occurrence of the expression ``mixed sum''
we are aware of  appears in \cite{Du},
with a somewhat restricted meaning.
See \cite[Section 4]{t} for more references.   
Note that some authors use the expression \emph{shuffled 
sum}  for what we call here a mixed sum.  
Finite mixed sums provide still another characterization
of the Hessenberg natural sum.

\setcounter{theorem}{-1}
\begin{theorem} \labbel{carr}
\emph{(Carruth \cite{Car}, Neumer \cite{neumer})} 
For every $ n < \omega$ and ordinal numbers
 $\alpha_0, \dots, \alpha _n$,  
the largest mixed
sum of $( \alpha_i) _{i \leq n} $ exists and is 
$\alpha_0 \+ \alpha _1 \+ \dots \+ \alpha _n $.
 \end{theorem}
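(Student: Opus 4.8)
The plan is to prove the two inequalities separately: first that every mixed sum of $(\alpha_i)_{i\le n}$ is at most $\alpha_0 \+ \cdots \+ \alpha_n$, so that the natural sum is an upper bound for all mixed sums, and then that $\alpha_0 \+ \cdots \+ \alpha_n$ is itself realizable as a mixed sum, so that the bound is attained and is therefore the largest mixed sum.

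For the upper bound I would argue by induction on the ordinal $\gamma$. Suppose $\gamma$ is a mixed sum witnessed by a partition $\gamma = \bigcup_{i\le n} A_i$ with $A_i$ of order-type $\alpha_i$. Fix any $\delta < \gamma$; viewing $\delta$ as the initial segment $\{\eta \mid \eta < \delta\}$ of $\gamma$, the sets $A_i \cap \delta$ partition $\delta$, and each is an initial segment of $A_i$, hence of some order-type $\alpha_i' \le \alpha_i$. Since $\delta$ itself lies in exactly one block $A_j$, the segment $A_j \cap \delta$ is a \emph{proper} initial segment of $A_j$, so $\alpha_j' < \alpha_j$. Thus $\delta$ is a mixed sum of $(\alpha_i')_{i\le n}$, and the inductive hypothesis gives $\delta \le \alpha_0' \+ \cdots \+ \alpha_n'$. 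By strict monotonicity of $\+$ on each argument (Proposition \ref{minnat}), together with $\alpha_i' \le \alpha_i$ for all $i$ and $\alpha_j' < \alpha_j$, we get $\alpha_0' \+ \cdots \+ \alpha_n' < \alpha_0 \+ \cdots \+ \alpha_n$, whence $\delta < \alpha_0 \+ \cdots \+ \alpha_n$. As $\delta < \gamma$ was arbitrary, $\gamma \le \alpha_0 \+ \cdots \+ \alpha_n$.

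For realizability I would exhibit an explicit partition using Cantor normal forms. Write each $\alpha_i = \sum_{\xi} \omega^\xi c_{i,\xi}$ with $c_{i,\xi} < \omega$ and finitely many nonzero; then by Definition \ref{natsumdef}, $\alpha_0 \+ \cdots \+ \alpha_n = \sum_\xi \omega^\xi C_\xi$, where $C_\xi = \sum_{i \le n} c_{i,\xi}$ and the exponents $\xi$ are listed in strictly decreasing order. Read $\gamma_0 := \alpha_0 \+ \cdots \+ \alpha_n$ as the ordered concatenation, over decreasing $\xi$, of blocks $B_\xi$ of order-type $\omega^\xi C_\xi$, each block consisting of $C_\xi$ consecutive copies of $\omega^\xi$. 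Inside $B_\xi$ I would assign $c_{i,\xi}$ of these copies to $A_i$. Since within a block the copies allotted to $A_i$ retain their induced order, $A_i \cap B_\xi$ has order-type $\omega^\xi c_{i,\xi}$ regardless of how the copies are interleaved; and because the blocks are concatenated in decreasing order of $\xi$, the full set $A_i$ has order-type $\sum_\xi \omega^\xi c_{i,\xi} = \alpha_i$, its Cantor normal form. The $A_i$ are pairwise disjoint with union $\gamma_0$, so $\gamma_0$ is a mixed sum of $(\alpha_i)_{i\le n}$. Combined with the upper bound, $\gamma_0$ is the largest mixed sum, as claimed.

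The main obstacle is not the upper bound, which is a clean strict-monotonicity induction, but making the realizability bookkeeping rigorous: one must check that each interleaved piece $A_i \cap B_\xi$ genuinely has order-type $\omega^\xi c_{i,\xi}$ (using that a finite number of $\omega^\xi$-copies, in their induced order, sum to $\omega^\xi c_{i,\xi}$) and that assembling the contributions across decreasing $\xi$ reproduces exactly the Cantor normal form of each $\alpha_i$. An alternative to the direct $(n+1)$-ary construction would be to realize only the binary case $\alpha \+ \beta$ and then bootstrap to arbitrary $n$ via associativity of $\+$ and the observation that a mixed sum of mixed sums is again a mixed sum; I expect the direct normal-form shuffle to be cleaner, since it treats all summands uniformly.
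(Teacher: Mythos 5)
Your proposal is correct and takes essentially the same approach as the paper: the upper bound by restricting the witnessing partition to an initial segment, noting one block strictly shrinks, and applying strict monotonicity of $\+$ (your direct induction on $\gamma$ is the paper's minimal-counterexample argument recast), and realizability by cutting each summand into normal-form blocks and concatenating all blocks in decreasing order of length. The only cosmetic difference is that you work with Cantor normal form and distribute $c_{i,\xi}$ copies of $\omega^\xi$ per block, whereas the paper uses the additive (coefficient-free) normal form.
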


\begin{proof} (Sketch)
We sketch a proof for later use.
If $\alpha_0= \omega ^ {\xi_h}  + 
\dots + \omega ^ {\xi_0}$   in additive normal form,
consider $\alpha_0$ as the union of the ``blocks''
$[0, \omega ^ {\xi_h})$, 
$[\omega ^ {\xi_h}, \omega ^ {\xi_h}+\omega ^ {\xi_{h-1}})$, \dots,
 $[\omega ^ {\xi_h}  + 
\dots + \omega ^ {\xi_1}, \omega ^ {\xi_h}  + 
\dots + \omega ^ {\xi_1} +  \omega ^ {\xi_0})$, of respective order-types 
$\omega ^ {\xi_h}$, $\omega ^ {\xi_{h-1}}$, 
\dots, $\omega ^ {\xi_0}$. Here we are using the standard interval
notation, e.~g., $[\omega ^ {\xi_h}, \omega ^ {\xi_h}+\omega ^ {\xi_{h-1}})=
\{ \, \gamma < \alpha _0   \mid  \omega ^ {\xi_h} \leq \gamma < 
\omega ^ {\xi_h} + \omega ^ {\xi_{h-1}} \, \} $.

Represent each $\alpha_i$ as a union of blocks as above and 
arrange all such blocks in decreasing order, with respect to length.
We get exactly the order-type of $\alpha_0  \+ \dots \+ \alpha _n $.
Thus $\alpha_0  \+ \dots \+ \alpha _n $ can be realized as  a mixed sum
of the $\alpha_i$s. 

Suppose by contradiction that some ordinal $\gamma$ strictly larger
 than $\alpha_0  \+ \dots \+ \alpha _n $ can be realized as a mixed sum
of $( \alpha_i) _{i \leq n} $.
Choose a counterexample with $\gamma$ minimal,
 thus $\gamma$ is the disjoint union
of $A_0$, \dots, $A_n$, such subsets have order-type, respectively,
$\alpha_0$, \dots, $\alpha_n$, and     
$ \gamma ' = \alpha_0  \+ \dots \+ \alpha _n < \gamma $.
Thus $\gamma' = (A_0 \cap \gamma ') \cup \dots \cup (A_n \cap \gamma ')$,
hence $\gamma'$ is a mixed sum of the ordinals 
$\alpha'_0$, \dots, $\alpha'_n$, where $\alpha'_0$
is the order-type of  $A_0 \cap \gamma '$, etc.
In particular,
$\alpha'_0 \leq \alpha_0$, etc. 
Since $\gamma' < \gamma = A_0 \cup \dots \cup A_n$, we have
$\gamma' \in A_i$, for some $i \leq n$, hence $\alpha'_i < \alpha _i$,
since $\alpha'_i$ is the order-type of  $A_i \cap \gamma '$.
By strict monotonicity of the Hessenberg natural sum,
$\alpha'_0  \+ \dots \+ \alpha' _n < \alpha_0  \+ \dots \+ \alpha _n 
= \gamma '$. This contradicts the minimality of $\gamma$. 
\end{proof}

\begin{prob40a} \labbel{refer}   
An anonymous
referee asked the following interesting problem. 
Suppose that $\gamma$ is realized as a mixed sum of 
 $\alpha_0, \alpha _1, \dots$. Study the possible topologies
induced by the topology of $\gamma$ on  
$\alpha_1, \alpha _2, \dots$ through $ A_0, A _1, \dots$
as subspaces of $\gamma$.
\end{prob40a}

There is no immediate generalization of Theorem \ref{carr}
for mixed sums of an infinite family of ordinals.
In fact, every countable ordinal is a mixed sum of a countable family of $1$s.  
On the other hand, some infinitary sums allow a characterization
in terms of mixed sums satisfying some special properties
\cite{w,t}. We show that the same applies to $\sums$. 

\begin{definition} \labbel{ormf}    
Suppose that $\gamma$ is a mixed sum of
$(\alpha_i) _{i \in I} $ as realized by some
family $(A_i) _{i \in I} $ of subsets of $\gamma$
as above, that is, the $A_i$ are 
pairwise disjoint,
$\bigcup _{i \in I} A_i = \gamma  $
and each 
$A_i$ has order-type $\alpha_i$. For every $i \in I$,
let $h_i$ be the order-preserving bijection 
from $A_i$ to $\alpha_i$.
  
Let us say that some mixed sum is \emph{order respecting modulo finite},
\emph{ormf}, for short, if, for every $i \in I$   and $\eta \in A_i$,
the set 
\begin{equation}\labbel{or}  
    \{ \, j \in I \mid  \text{there is }\theta \in A_j 
\text{ such that } \theta < \eta  
\text{ and }  h_j ( \theta  ) \geq h_i( \eta  ) \,\}
 \end{equation}  
is finite.
Essentially, this means that in $\gamma$  any element $ \eta $ with some rank
$\rho=h_i( \eta  )$, with respect to the set $A_i$ to which 
$ \eta $ belongs, 
is allowed to be larger than an arbitrary number of elements with smaller rank,
with respect to their relative  $A_j$s. On the contrary,
$ \eta  $ is allowed to be larger  than a set of elements with 
larger or equal rank taken only from a finite set of $A_j$s. 

It is 
interesting to compare the above notion
with a parallel notion used in \cite{w}.
In \cite{w} we defined a mixed sum to be \emph{left-finite}
if  $  \{ \, j \in I \mid  \text{there is }\theta \in A_j 
\text{ such that }   \theta  < \eta   \,\}$ is finite.
In this latter definition we take into account also those
$\theta$ such that $ h_j ( \theta  ) \leq h_i( \eta  )$; in particular, every 
left-finite realization   is ormf.
\end{definition}

The next lemma is almost immediate from those parts of Theorem \ref{simpth}
proved so far 
and the definition of an ormf mixed sum.
Recall that an ordinal is the set of all the smaller ordinals, hence
if $A \subseteq \gamma $ and $\eta < \gamma $, then
$A \cap \eta = A \cap [0, \eta ) =
\{ \theta \in A \,  \mid  \theta < \eta  \, \} $.
We sometimes maintain the interval notation, as  in 
$A \cap [0, \eta )$ for clarity. 

\begin{lemma} \labbel{orf}
Suppose that $\gamma \geq \sums _{i < \omega} \alpha _{i}$
and $\gamma$  is an order respecting modulo finite mixed sum of
$( \alpha _i) _{i < \omega} $ realized by  $(A_i)_{i < \omega} $.
Suppose further that $\eta < \gamma $ and, for every $i < \omega$, 
let $ \beta _i$
be the order-type of   
$B_i = A_i \cap [0, \eta )  $.

Then   $\sums _{i < \omega} \beta  _{i}
< \sums _{i < \omega} \alpha _{i} \leq \gamma $.
 \end{lemma}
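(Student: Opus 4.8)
The plan is to reduce the statement to the strict monotonicity of $\sums$ on z-special elements, which is among the parts of Theorem \ref{simpth} already established. First I would record the two routine facts about the truncation. Since $B_i = A_i \cap [0,\eta)$ is a downward-closed, hence initial, subset of $A_i$, its order-type $\beta_i$ satisfies $\beta_i \leq \alpha_i$, so that $\bm \beta \leq \bm \alpha$ in the sense of Definition \ref{s1s2}. Moreover, because $\eta < \gamma = \bigcup_{i<\omega} A_i$, there is a unique index $\bar{\imath}$ with $\eta \in A_{\bar{\imath}}$; then $B_{\bar{\imath}}$ consists precisely of the elements of $A_{\bar{\imath}}$ lying below $\eta$, so $\beta_{\bar{\imath}} = h_{\bar{\imath}}(\eta) < \alpha_{\bar{\imath}}$, the inequality being strict because $\eta$ itself belongs to $A_{\bar{\imath}}$ but not to $B_{\bar{\imath}}$.

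The crux is to show that this same index $\bar{\imath}$ witnesses strict monotonicity on z-special elements, namely that $\zeta_{\bm \beta} < \alpha_{\bar{\imath}}$, and this is exactly the point where the ormf hypothesis is used. I would apply condition \eqref{or} of Definition \ref{ormf} at the pair $(\bar{\imath}, \eta)$: the set of indices $j$ for which some $\theta \in A_j$ has $\theta < \eta$ and $h_j(\theta) \geq h_{\bar{\imath}}(\eta) = \beta_{\bar{\imath}}$ is finite. Such a $\theta$ is exactly an element of $B_j$ whose rank in $A_j$ (equivalently, in $B_j$, since $B_j$ is an initial segment of $A_j$) is $\geq \beta_{\bar{\imath}}$; as the ranks occurring in $B_j$ fill the interval $[0,\beta_j)$, such a $\theta$ exists if and only if $\beta_j > \beta_{\bar{\imath}}$. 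Hence the ormf set equals $\{\,j < \omega \mid \beta_{\bar{\imath}} < \beta_j\,\}$ and is finite, which by the very definition of $\zeta_{\bm \beta}$ in Definition \ref{spec} forces $\zeta_{\bm \beta} \leq \beta_{\bar{\imath}}$.

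Combining the two bounds gives $\zeta_{\bm \beta} \leq \beta_{\bar{\imath}} < \alpha_{\bar{\imath}}$, so both requirements of clause \eqref{s3} in Definition \ref{defop} are met, taking $\bm \beta$ as the smaller and $\bm \alpha$ as the larger sequence: we have $\beta_i \leq \alpha_i$ for every $i < \omega$, together with an index $\bar{\imath}$ satisfying $\zeta_{\bm \beta} < \alpha_{\bar{\imath}}$ and $\beta_{\bar{\imath}} < \alpha_{\bar{\imath}}$. Since $\sums$ is strictly monotone on z-special elements by Theorem \ref{simpth}, this yields $\sums_{i<\omega} \beta_i < \sums_{i<\omega} \alpha_i$, and the remaining inequality $\sums_{i<\omega}\alpha_i \leq \gamma$ is the standing hypothesis.

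I expect the only genuinely substantive step to be the translation of the ormf condition \eqref{or} into the finiteness of $\{\,j \mid \beta_j > \beta_{\bar{\imath}}\,\}$; everything else is bookkeeping about initial segments and order-types. In carrying it out I would take care that the rank $h_j(\theta)$ of $\theta \in B_j$ computed inside $A_j$ agrees with its rank inside $B_j$, so that the passage from ``$h_j(\theta) \geq \beta_{\bar{\imath}}$ occurs in $B_j$'' to ``$\beta_j > \beta_{\bar{\imath}}$'' is faithful, with the non-strict $\geq$ of \eqref{or} correctly becoming the strict $>$ after passing to order-types.
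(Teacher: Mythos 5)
Your proof is correct and follows essentially the same route as the paper's: the truncation facts give $\bm \beta \leq \bm \alpha$ with $\beta_{\bar{\imath}} < \alpha_{\bar{\imath}}$ at the unique index $\bar{\imath}$ with $\eta \in A_{\bar{\imath}}$, the ormf condition applied at $(\bar{\imath},\eta)$ yields $\zeta_{\bm \beta} \leq h_{\bar{\imath}}(\eta) < \alpha_{\bar{\imath}}$, and clause \eqref{s3} (with the roles of the two sequences swapped) concludes. If anything, your translation of \eqref{or} into the finiteness of $\{\, j \mid \beta_j > \beta_{\bar{\imath}} \,\}$, hence $\zeta_{\bm \beta} \leq \beta_{\bar{\imath}}$ directly, is slightly more careful than the paper's wording, which passes through the stronger (and not strictly needed) claim $\varepsilon_{\bm \beta} \leq h_{\bar{\imath}}(\eta)$ before weakening it to the $\zeta$ bound actually used.
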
 

We will soon prove  that 
if $\gamma$  is an ormf mixed sum of
$( \alpha _i) _{i < \omega} $,
 then $\gamma \leq \sums _{i < \omega} \alpha _{i}$,
so that the only actually occurring case in the lemma
is when $\gamma = \sums _{i < \omega} \alpha _{i}$.
However, we need the more general statement
in the subsequent proof of Theorem \ref{oc}.

\begin{proof} 
Straightforwardly, $\beta_i \leq \alpha _i$, for every $i < \omega$.  
By the definition of a mixed sum,
there is a unique $\bar \imath  < \omega$ such that
 $\eta \in A_{\bar \imath }$,
in particular, the order-type of $B_{\bar \imath }$ is strictly less than
the order-type of $A_{\bar \imath }$, that is,
 $ \beta _{\bar \imath } < \alpha  _{\bar \imath }$.
Since the realization is ormf, 
there exist only a finite number of indexes $j$  such that
the order-type of $B_j$ is $\geq h _{\bar \imath }( \eta)$,
that is, $ \varepsilon _ {\bm \beta } \leq h _{\bar \imath }( \eta)$,
in particular, $\zeta _ {\bm \beta } \leq h _{\bar \imath }( \eta)$
(actually, $\zeta _ {\bm \beta } < h _{\bar \imath }( \eta)$,
except possibly when $h _{\bar \imath }( \eta)$ is limit). 
Since, by construction,  $ h _{\bar \imath }( \eta) < \alpha _{\bar \imath } $,
we get  $\zeta _ {\bm \beta } < \alpha _{\bar \imath } $.
 Thus  
 $\sums _{i < \omega} \beta  _{i}
< \sums _{i < \omega} \alpha _{i}$,
since $ \beta _{\bar \imath } < \alpha  _{\bar \imath }$
and $\sums$ is strictly monotone
on z-special elements \eqref{s3}, by the proved part of Theorem \ref{simpth}
(note that here $\alpha$  and $\beta$  are swapped). 
\end{proof}

\begin{theorem} \labbel{oc}
For every sequence $( \alpha _i) _{i < \omega} $ of ordinals, 
the ordinal $\sums _{i < \omega} \alpha _{i}$
is the largest mixed sum of $( \alpha _i) _{i < \omega} $
which is order respecting modulo finite.
 \end{theorem}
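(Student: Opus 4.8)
The plan is to establish the two halves of ``largest'' separately. Write $S:=\sums_{i<\omega}\alpha_{i}$. First I would show that $S$ is itself realizable as an order respecting modulo finite (ormf) mixed sum of $(\alpha_i)_{i<\omega}$ (the \emph{realization} half); then I would show that every ormf mixed sum $\gamma$ of $(\alpha_i)_{i<\omega}$ satisfies $\gamma\le S$ (the \emph{optimality} half). The optimality half is short and rests entirely on Lemma \ref{orf}, whereas the realization half is the substantial part and generalizes the block-shuffling construction in the proof of Theorem \ref{carr}.

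For optimality I would argue by induction on the ordinal $\gamma$, proving the statement simultaneously for all sequences: assuming that every length-$\omega$ ormf mixed sum whose value is $<\gamma$ is bounded by the $\sums$ of its summand-sequence, let $\gamma$ be an ormf mixed sum of $(\alpha_i)_{i<\omega}$ realized by $(A_i)_{i<\omega}$ and suppose for contradiction that $\gamma>S$. Since then $\gamma\ge S$, I may apply Lemma \ref{orf} with $\eta=S<\gamma$: letting $\beta_i$ be the order-type of $A_i\cap[0,S)$ gives $\sums_{i<\omega}\beta_i<S$. But $\bigcup_{i<\omega}(A_i\cap[0,S))=\gamma\cap[0,S)=S$, and the sets $A_i\cap[0,S)$ are initial segments of the $A_i$ carrying the restricted order-isomorphisms, so they realize $S$ as an ormf mixed sum of $(\beta_i)_{i<\omega}$ — ormf is inherited because the index set in \eqref{or} can only shrink under restriction to an initial segment. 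As $S<\gamma$, the induction hypothesis yields $S\le\sums_{i<\omega}\beta_i$, contradicting $\sums_{i<\omega}\beta_i<S$. Hence $\gamma\le S$.

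For realization I would construct the family $(A_i)_{i<\omega}$ in two layers. The upper layer is easy: by Theorem \ref{carr} the finite natural sum of the remainders $\alpha_{i_1}^\heartsuit\+\dots\+\alpha_{i_h}^\heartsuit$ (resp.\ $\alpha_{i_1}^\diamondsuit\+\dots\+\alpha_{i_h}^\diamondsuit$ in case \eqref{prima}) is realizable as a finite mixed sum, and I would place it as a terminal segment on top of the bulk, assigning its pieces to the e-special summands. The lower layer must tile the ``bulk'' ordinal — $\zeta_{\bm\alpha}\x\omega$ in case \eqref{seconda}, $(\hat\zeta_{\bm\alpha}\x\omega)\+\omega^{\xi_0}$ in case \eqref{prima} — by the bottom-parts of the $\alpha_i$ (each $\le\zeta_{\bm\alpha}$, resp.\ $\le\hat\zeta_{\bm\alpha}$ for the e-special ones), preserving order-types. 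Writing $\zeta_{\bm\alpha}=\omega^{\xi_s}+\dots+\omega^{\xi_0}$, the block $\omega^{\xi_j+1}$ of $\zeta_{\bm\alpha}\x\omega$ splits as $\bigcup_{n<\omega}[\omega^{\xi_j}n,\omega^{\xi_j}(n+1))$ into $\omega$ sub-blocks of type $\omega^{\xi_j}$; the plan is to feed these sub-blocks to the infinitely many bottom-parts that reach level $\omega^{\xi_j}$, arranging the several blocks of one bottom-part along a single ``diagonal'' in decreasing order of size, exactly as in Theorem \ref{carr}, so that each bottom-part keeps its order-type and the blocks collectively tile the bulk.

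The main obstacle is to make the lower-layer assignment an \emph{exact} tiling that is simultaneously ormf, and to reconcile the two cases. Exactness uses that, for every $\zeta'<\zeta_{\bm\alpha}$, infinitely many $\alpha_i$ are $\ge\zeta'$ (by the definitions of $\zeta_{\bm\alpha}$ and $\varepsilon_{\bm\alpha}$), so each block level is met by infinitely many bottom-parts and no gaps remain. For ormf, the decisive computation is that an element $\eta$ sitting in a sub-block cut from $\omega^{\xi_{j_0}+1}$ has rank, inside its own bottom-part, at least $\omega^{\xi_s}+\dots+\omega^{\xi_{j_0+1}}$ and below $\omega^{\xi_s}+\dots+\omega^{\xi_{j_0}}$; any $\theta<\eta$ lying in a strictly larger block has strictly smaller rank, so the indices witnessing \eqref{or} come only from the same block $\omega^{\xi_{j_0}+1}$ in the finitely many earlier sub-blocks, giving finiteness. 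The genuinely delicate point is case \eqref{prima}, where $\varepsilon_{\bm\alpha}=\zeta_{\bm\alpha}$ is a limit with only finitely many $\alpha_i$ reaching $\zeta_{\bm\alpha}$: then the top level of the shuffle cannot be filled $\omega$ times, so the bulk must end in a single block $\omega^{\xi_0}$ rather than $\omega^{\xi_0+1}$. I would accommodate this by truncating the topmost layer, splitting the e-special elements at $\hat\zeta_{\bm\alpha}$ instead of $\zeta_{\bm\alpha}$, and invoking \eqref{cuorqua} to match the resulting order-type with the expression \eqref{prima} for $S$; the ormf check for the truncated top is identical to the generic one.
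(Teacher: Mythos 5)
Your optimality half is correct, and it is essentially the paper's own argument: restrict the given ormf realization of $\gamma$ to the initial segment $[0,S)$, invoke Lemma \ref{orf}, and close by well-foundedness (the paper phrases this as a minimal counterexample on $\sums_{i<\omega}\alpha_i$ rather than induction on $\gamma$, but the two are interchangeable). The realization half, however, has a genuine error: placing a realization of $\alpha_{i_1}^\heartsuit\+\dots\+\alpha_{i_h}^\heartsuit$ as a \emph{terminal} segment atop the bulk produces a set whose order-type is the \emph{ordinal} sum $(\zeta_{\bm\alpha}\x\omega)+(\alpha_{i_1}^\heartsuit\+\dots\+\alpha_{i_h}^\heartsuit)$, not the \emph{natural} sum required by \eqref{seconda}. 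Whenever some heart has a normal-form exponent exceeding $\xi_0+1$ (the typical situation, since e-special elements can be arbitrarily large compared with $\zeta_{\bm\alpha}$), part of the bulk is absorbed and the realized ordinal is strictly smaller than $\sums_{i<\omega}\alpha_i$. Concretely, for the paper's example $(1,\omega,\omega^\omega,2,2,2,\dots)$ one has $\zeta_{\bm\alpha}=2$, bulk $2\x\omega=\omega 2$, hearts $\omega$ and $\omega^\omega$, and $\sums_{i<\omega}\alpha_i=\omega^\omega+\omega 3$; your construction yields a perfectly good ormf mixed sum, but of order-type $\omega 2+(\omega^\omega+\omega)=\omega^\omega+\omega$, so it does not witness the theorem.

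The hearts cannot form a single top layer at all: their normal-form blocks must be interleaved with the bulk blocks in decreasing order of size (in the example above, the block $\omega^\omega$ must be placed \emph{before} the bulk, and only the block $\omega$ after it), with a tie-breaking convention when a heart block and a bulk block share an exponent. The paper organizes this as a finite induction on the number of S-summands in the normal form of $\sums_{i<\omega}\alpha_i$: peel off the \emph{smallest} S-summand --- which may come either from $\zeta_{\bm\alpha}$ or from some $\alpha_{i_p}^\heartsuit$/$\alpha_{i_p}^\diamondsuit$ --- realize the remaining, shorter sum by the inductive hypothesis, and append only that smallest block on top, rechecking ormf at each step. A secondary problem is your uniform ``diagonal'' tiling of the bulk: when $\xi_j$ is a \emph{limit} ordinal attained by infinitely many bottom-parts (so \eqref{seconda} applies and the bulk contains $\omega^{\xi_j+1}$), cutting summands into increasing pieces and distributing them diagonally realizes only $\omega^{\xi_j}$, not $\omega^{\xi_j+1}$; there one must concatenate whole copies as in the paper's case (a), reserving the diagonal splitting for successor exponents as in case (c). The paper makes exactly this non-uniformity point in the remark inside its case (c$'$), and this is why its proof needs the case analysis rather than one uniform scheme.
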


 \begin{proof}
We first show that $\sums _{i < \omega} \alpha_{i}$
can be realized as an  ormf mixed sum of the $\alpha_i$.
Recall that \emph{ormf}  is a shorthand for  order respecting modulo finite.
The proof will be obtained by  combining three 
basic cases, which 
we now describe.

(a)  We first consider the case when, for some ordinal $\xi$,
the sequence $ \bm \alpha  = ( \alpha_i) _{i < \omega} $ 
is such that $\alpha_i = \omega ^ \xi$
for infinitely many $i < \omega$, and
 $\alpha_i <  \omega ^ \xi $
for the remaining indexes (possibly, an empty set).
In this case $\zeta_ {\bm \alpha } =  \omega ^ \xi$,
\eqref{seconda} applies and   
$\sums _{i < \omega} \alpha_{i} = \omega ^ {\xi +1}$.
In order to realize $\omega ^ {\xi +1}$, simply put a copy
of each $\alpha_i$ one after the other, in the order
given by the indexing of the sequence.

In more detail, recall that we denote intervals of ordinals in the 
usual way, e.~g., $[ \alpha _1, \alpha _2 ) = 
\{ \, \varepsilon \mid 
\alpha _1 \leq \varepsilon < \alpha _2   \, \}$.
Thus we set 
 $A_0= [0, \alpha _0)$,  $A_1= [ \alpha _0, \alpha _0 + \alpha _1)$,
$A_2= [\alpha _0 +  \alpha_1 , \alpha _0 +  \alpha_1 + \alpha _2)$, etc.  

The (order-types of) those $\alpha_i  $ which are $< \omega ^ \xi $
are absorbed by some subsequent $\alpha_j$
with order-type $ \omega ^ \xi $, since there are infintely
many such $\alpha_j$.
We thus get the order-type of an $ \omega$-indexed chain
of ``blocks'' each having order-type $ \omega ^ \xi $,
thus the full chain has order-type
$ \omega ^ \xi \omega = \omega ^ {\xi+1} $.  
The realization is clearly ormf, actually, left-finite in the sense from \cite{w}.

(b)   For some ordinal $\xi>0$,
the sequence $ \bm \alpha  = ( \alpha_i) _{i < \omega} $ 
is such that $\alpha_i < \omega ^ \xi$,
for every $i < \omega$, but, for every $\delta<  \omega ^ \xi$,
there is $i < \omega$ such that  
 $ \delta \leq \alpha_i $.
Furthermore, in the present case we assume that $\xi$ is a limit ordinal.
Again,  $\zeta_ {\bm \alpha } =  \omega ^ \xi$, 
but in this case
\eqref{prima} applies and   
$\sums _{i < \omega} \alpha_{i} = \omega ^ {\xi}$.
As in (a), if we put a copy
of each $\alpha_i$ one after the other in their order,
we realize $\omega ^ \xi$. Indeed, we have 
arbitrarily large $\alpha_i$s less than
the limit ordinal $\omega ^ \xi$, hence the order-type
of the full chain is $\geq \sup \{ \,  \delta \mid \, \delta < \omega ^ \xi \} 
= \omega ^ \xi$, since $\xi >0$, hence  $\omega ^ \xi$ is limit.
On the other hand, 
each finite initial segment of the 
chain has order-type  $< \omega ^ \xi$,
since $\omega ^ \xi$ is additively indecomposable,
hence the full chain cannot have order-type  $> \omega ^ \xi$.
Again, the realization is obviously ormf, actually,
 left-finite.

(c) The assumptions in the final case are
 as in (b), but in this case we assume that 
$\xi$ is a successor ordinal, instead, say, $\xi = \sigma +1$.
In detail, we also assume that
 $\alpha_i < \omega ^ \xi$,
for every $i < \omega$, but, for every $\delta<  \omega ^ \xi$,
there is $i < \omega$ such that  
 $ \delta \leq \alpha_i $.

In this case 
\eqref{seconda} applies and   
$\sums _{i < \omega} \alpha_{i} = \omega ^ {\xi +1}$.
Here a slightly more elaborate construction is needed.

We first exemplify the construction in the special case
$\xi =1$, say, for the sequence $(1,2,3, \dots, n, \dots)$.  
Set $A_0 = \{ 0 \} $, 
$A_1 = \{ 1 , \omega \} $,
 $A_2 = \{ 2 , \omega+1, \omega 2 \} $,
 $A_3 = \{ 3 , \omega+2, \omega 2+1, \omega 3 \} $, \dots\ 
Each $A_i$ has order-type $\alpha_i$ and 
$\bigcup _{i < \omega } A_i = \omega^2 = \omega ^ {\xi +1}$,
thus we obtain the result in this special case.   

In the general case here in (c), construct a chain by using
a copy of the initial segment of order-type
 $\omega ^ \sigma $ of each long enough $\alpha_i$
(note that in the previous paragraph, $\sigma=0$, $\omega ^ \sigma =1$);
if $\alpha_i < \omega ^ \sigma$, instead, take the whole of $\alpha_i$.
Since we have infinitely many indexes such that 
$\alpha_i \geq \omega ^ \sigma$, this initial chain
contributes to a set of order-type   
 $\omega ^ \sigma \omega =  \omega ^ {\sigma+1} = 
 \omega ^ \xi $.
Going on, 
append atop the above-described first chain,
a second chain constructed in a similar way.
In detail, merge a copy of the  segment 
 $[\omega ^ \sigma, \omega ^ \sigma+\omega ^ \sigma)  $
 of each long enough $\alpha_i$;
if $\alpha_i $ is not long enough, take the whole of the 
remaining part of $\alpha_i$; if $\alpha_i$ has already been exhausted when
constructing the first chain, simply do not consider it.
This construction adds a second chain of order-type
$ \omega ^ \xi$ to the first chain, hence we get 
$  \omega ^ \xi 2$ in total. 
Since there are arbitrarily large $\alpha_i$ less than
 $\omega ^ \xi$, we never fall short of elements,
that is, we can iterate the construction $ \omega$ times,
thus realizing $  \omega ^ \xi \omega =   \omega ^ {\xi+1} $.

Explicitly, if, say, $\alpha_0 = \omega ^ \sigma 3 + 10$,
then 
\begin{equation*}  
A_0 = [0, \omega ^ \sigma) \cup 
[\omega ^ {\sigma+1}, \omega ^ {\sigma+1}+\omega ^ \sigma) \cup
[\omega ^ {\sigma+1}2, \omega ^ {\sigma+1}2+\omega ^ \sigma)
\cup  [\omega ^ {\sigma+1}3, \omega ^ {\sigma+1}3+10).
\end{equation*}     
If $\alpha_1 \geq \omega ^ \sigma 5 $, then   
\begin{align*} 
A_1  &= [\omega ^ \sigma, \omega ^ \sigma 2) \cup 
[\omega ^ {\sigma+1}+\omega ^ \sigma,
 \omega ^ {\sigma+1}+\omega ^ \sigma 2) \cup
[\omega ^ {\sigma+1}2+\omega ^ \sigma,
 \omega ^ {\sigma+1}2+\omega ^ \sigma 2)
\\
&\cup  [ \omega ^ {\sigma+1}3+10 , \omega ^ {\sigma+1}3 +\omega ^ \sigma)
\cup [\omega ^ {\sigma+1}4, \omega ^ {\sigma+1}4+\omega ^ \sigma)
\cup \dots 
 \end{align*} 
 and so on.
In the informal terminology from the above paragraphs, 
the first two ``blocks'' of the first chain are 
$[0, \omega ^ \sigma)$ and  $[\omega ^ \sigma, \omega ^ \sigma 2) $,
the first two blocks of the second chain are  
  $[\omega ^ {\sigma+1}, \omega ^ {\sigma+1}+\omega ^ \sigma)$
and $[\omega ^ {\sigma+1}+\omega ^ \sigma,
 \omega ^ {\sigma+1}+\omega ^ \sigma 2) $, etc. 

In the above construction we apply in a significant way the condition in the
definition of an ormf realization.
For every $i < \omega$ and each element $\eta$ of, say, the second chain
(provided $\alpha_i$ were long enough), we have 
 $h_i(\eta) \geq \omega ^ \sigma$.
It is true that there are infinitely many indexes $j$  such that  
$\eta$ is $\geq$ some element $\theta \in A_j$
(thus the realization is not left-finite in the sense from \cite{w}),
but either $\theta$ belongs to the first chain,
hence  $h_i(\eta) \geq \omega ^ \sigma > h_j(\theta)$,
so that $j $ does not belong to the set in  \eqref{or},
or, by construction, $j \leq i$; then note that  there are only  a finite number
of indexes $\leq i$. Arguing in this way
for every element of the chain, we see that the realization is
ormf. This concludes the discussion of case (c).

(c$'$) We also discuss the variation of case (c) in which 
the sequence is allowed to contain a finite number of summands equal to 
$\omega ^ \xi$. In detail, we  assume that
 $\alpha_i = \omega ^ \xi$ for finitely many indexes,
and  $\alpha_i < \omega ^ \xi$ for the remaining indexes.
Moreover,  for every $\delta<  \omega ^ \xi$,
there are infinitely many $i < \omega$ such that  
 $ \delta \leq \alpha_i $.
Again, $\xi$ is assumed to be a successor ordinal,
hence case \eqref{seconda} applies; moreover, those summands 
$\alpha_{i_j}$ which are equal to  $\omega ^ \xi$ are 
e-special, but are such that $\alpha_{i_j}^\heartsuit =0$,
hence they do not contribute to 
 $\sums _{i < \omega} \alpha_{i}$. Thus, as in (c),
$\sums _{i < \omega} \alpha_{i} = \omega ^ {\xi +1}$. 

A realization is obtained (this is not the only possible way)
by putting all the summands of length  $\omega ^ \xi$
at the beginning (hence the ormf condition is preserved,
since there are only finitely many such elements)
and then realizing $ \omega ^ {\xi +1}$ as in (c) using the 
remaining elements. 
The elements of length  $\omega ^ \xi$ are absorbed by
$ \omega ^ {\xi +1}$, hence the total length of the realization
is $ \omega ^ {\xi +1}$.

 Another possibility 
is to work  essentially in the same way as in case (c); the only difference
is that summands  equal to  $\omega ^ \xi$ are divided
into infinitely many blocks, which nevertheless are cofinal in 
the realization of 
 $\sums _{i < \omega} \alpha_{i}$, hence they do not modify the outcome.
For example, in the case of the sequence $( \omega, 1, 2, \omega ,3,
4, 5, 6,  \dots, n, \dots)$,  
$A_0 = \{ 0, \omega, \omega 2, \omega 3, \dots \} $, 
$A_1 = \{ 1  \} $,
 $A_2 = \{ 2 , \omega+1\} $,
 $A_3 = \{ 3 , \omega+2, \omega 2 +1, \omega 3+1, \dots \} $,
 $A_4 = \{ 4 , \omega+3, \omega 2+2 \} $, \dots\ 

\emph{Remark.} In passing, we mention that the 
last variation in (c$'$) works also if there
are infinitely many summands equal to 
$\omega ^ \xi$. In other words, \emph{in the case when $\xi$ 
is successor}, if there
are infinitely many summands equal to 
$\omega ^ \xi$, we can equivalently apply  the construction
in (a) or the construction in (c$'$). 
This is not always the case; actually, when $\xi$ is limit
the construction in (c$'$) does not even make sense.
 Of course, we can devise a similar construction,
when $\xi$ is limit, say, $\alpha_i= \omega ^ \omega $, for every $i < \omega$,
and we may set 
$A_0 = [0, \omega ) \cup [ \omega ^2 , \omega ^2 2, )
\cup [ \omega ^3 , \omega ^3 2), \dots  $,
$A_1= [ \omega, \omega 2 ) \cup [ \omega ^2 2 , \omega ^2 3, )
\cup [ \omega ^3 2, \omega ^3 3) $ \dots, ie,
divide each summand in ``blocks'' of increasing
lengths $ \omega$, $ \omega^2$, $ \omega^3$ \dots\    
 But, in such a way, we only realize $ \omega^ \omega $,
not the wanted outcome $ \omega ^{ \omega +1} $.  

 The above remark   shows that the constructions in (a) - (c$'$)
cannot be made uniform.  

Having dealt with the above basic cases, the proof of the theorem 
can be roughly worked out along the lines of the proof of 
Theorem \ref{carr} \cite{Car,neumer,T}.
In any case, we will give full details.
If $\zeta_ {\bm \alpha }=0$,
then all but a finite number of the summands $\alpha_i$
are equal to $0$, hence the result is given
by Carruth  Theorem \ref{carr} itself.
The case when $\zeta _ {\bm \alpha }$ has just one summand
and there is no
special element follows from cases (a) - (c) above.
If   $\zeta _ {\bm \alpha }=   \omega ^ {\xi_s}  + 
\dots + \omega ^ {\xi_1}+ \omega ^ {\xi_0}$ has more than one summand,
we always have countably many elements as long as 
 $ \omega ^ {\xi_s}  + 
\dots + \omega ^ {\xi_1}$,
so we can realize each $\omega ^ {\xi_p +1} $ 
by the methods in (a). We can realize
$\omega ^ {\xi_0}$ or  $\omega ^ {\xi_0+1}$  as in (a) - (c$'$),
according to the corresponding case.
If there are e-special elements, their contributions
should be inserted in the appropriate place, in decreasing order according
to the length of each contribution, 
as in the proof of 
 \cite[Theorem 1, II]{Car}. 

 For example, if 
$  \redver{ \alpha _0 = \omega^7 +1} $,
$  \bluever{ \alpha _1 = \omega ^4+ \omega } $,
$  \greenver{ \alpha_2 = \omega ^3+ \omega ^2 }$, 
$  \alpha _i= \omega ^3+ (i-2) $, for 
$ i \geq 3$, with
$\zeta _ {\bm \alpha }= \omega ^3+ \omega $,
$  \redver{ \alpha_0^ \heartsuit = \omega^7 +1}$,
 $  \bluever{ \alpha_1^ \heartsuit = \omega^4 + \omega } $,
  $\greenver{\alpha_2^ \heartsuit = \omega^2}$, 
we can realize   $\sums _{i < \omega} \alpha _{i}
=   \redver{ \omega ^7} +   \bluever{ \omega ^4} +  \omega ^4 + 
\greenver{\omega^2} + \omega ^ 2+
  \bluever{ \omega } +  \redver{ 1}$ by
\begin{align*}  
\redver{A_0} & = \redver{[0, \omega^7 ) \cup \{\omega ^7 + \omega ^4 2 + \omega ^2 2 + \omega \} },
\\
 \bluever{A_1} &=  \bluever{ [\omega^7, \omega ^7 + \omega ^4 )
 \cup [\omega ^7 + \omega ^4 2 + \omega ^2 2,
\omega ^7 + \omega ^4 2 + \omega ^2 2 + \omega )}
\\ 
\greenver{ A_2} &=
\greenver{ [\omega ^7 + \omega ^4, \omega ^7 + \omega ^4 + \omega ^3 ) \cup 
[\omega ^7 + \omega ^4  2, \omega ^7 + \omega ^4 2 + \omega ^2)}
\\ 
A_3& = [ \omega ^7 + \omega ^4 + \omega ^3 , 
 \omega ^7 + \omega ^4 + \omega ^3 2) \cup
 \{ \omega ^7 + \omega ^4 2 + \omega ^2 \},
\\ 
A_4 &= [ \omega ^7 + \omega ^4 + \omega ^3 2, 
 \omega ^7 + \omega ^4 + \omega ^3 3) \cup 
\{ \omega ^7 + \omega ^4 2 + \omega ^2 +1, 
\omega ^7 + \omega ^4 2 + \omega ^2  + \omega \} , \dots  
\\ 
A_5 &= [ \omega ^7 + \omega ^4 + \omega ^3 3, 
 \omega ^7 + \omega ^4 + \omega ^3 4) \cup  
\\
& \phantom{=\ \ }
\{ \omega ^7 + \omega ^4 2 + \omega ^2 +2, 
\omega ^7 + \omega ^4 2 + \omega ^2  + \omega +1,
\omega ^7 + \omega ^4 2 + \omega ^2  + \omega 2  \},
\\
& \dots  
\end{align*} 
\begin{equation*}
\xymatrix{
 \ar[rr]^{ \omega ^7}_{\redver{A_0}} & & 
 \ar[rr]^{ \omega ^4}_{ \bluever{A_1}} &&
\ar[rr]^{ \omega ^4= \omega ^3 +\omega ^3 + \dots }
_{\greenver{A_2}\ A_3 \ A_4 \dots}
 &&
\ar[rr]^{ \omega ^2}_{\greenver{A_2}} &&
\ar[rr]^{ \omega ^2= \omega + \omega + \dots}_{A_3A_4A_5\dots\  A_4A_5\dots} &&
\ar[r]^{ \omega }_{ \bluever{A_1}}&\ar[r]^{1 }_{\redver{A_0}} &&
}
   \end{equation*}    

A technical detail needs to be mentioned. 
First, observe that if we write $\sums _{i < \omega} \alpha_{i}$
in normal form as  
 $ \omega ^ { \rho _k}  + 
\dots + \omega ^ { \rho _0}$,
some exponents might turn out to be equal, say, $\rho_\ell = \rho _{\ell+1}$.  
In case, say, $ \omega^{ \rho _\ell}$ can be obtained 
both from  some e-special element and from 
$\zeta _ {\bm \alpha }$, say
 $ \omega^{ \rho _\ell} =  \omega ^ {\xi_p+1} $, 
or $   \omega^{ \rho _\ell}= \omega ^ {\xi_0}$, then 
it is preferable, and sometimes necessary,
to realize $\sums _{i < \omega} \alpha_{i}$
by using first the interval coming from the e-special element,
and only after this, use the interval coming from
$\zeta _ {\bm \alpha }$ through the constructions in (a) - (c).
 
For example, if $\bm \alpha $ is
$ \omega ^ \omega , \omega , \omega ^2, \omega ^3, \dots, \omega ^n, \dots$,
we obtain an ormf, actually, left-finite realization of 
$\sums _{i < \omega} \alpha_{i}= \omega ^ \omega 2$
by setting
$A_0= [0, \omega ^ \omega )$,
 $A_1= [ \omega ^ \omega, \omega ^ \omega + \omega  )$,
 $A_2= [\omega ^ \omega + \omega, \omega ^ \omega + \omega + \omega ^2  )=
[\omega ^ \omega + \omega, \omega ^ \omega + \omega ^2  )$,
$A_3=[ \omega ^ \omega + \omega ^2,  \omega ^ \omega + \omega ^3)$, etc.
On the other hand, we have no way to put \emph{all}
the elements of $A_0$ atop all the elements of the $A_i$, for $i >0$,
if we want to maintain the ormf property. In any ormf realization, the
first element of $A_0$, corresponding to $0 \in  \omega^ \omega $,
must be larger than only finitely many elements coming from the
 $A_i$, for $i >0$. With finitely many $A_i$ with $i >0$
we can realize only ordinals strictly smaller than $ \omega^ \omega $,
so that the first element of $A_0$ must have  rank 
$< \omega^ \omega $ in the realization. Repeating the argument for every element of
$A_0$, we get that in an ormf realization, all the elements of $A_0$
must have rank $< \omega^ \omega $. 
We could have put, say,
$A_0= [ \omega ^2, \omega ^ \omega )$,
$A_1= [0, \omega)$,
$A_2= [ \omega ,  \omega^2 )$,
$A_3= [ \omega ^ \omega,  \omega ^ \omega + \omega ^3 )$, \dots,
or even intersperse $A_0$ with the other sets, but in any case 
we cannot go too far, and anyway we get no advantage.

Before giving a formal proof of the general case, we
present a few more examples which show that some subtleties
are necessary in the proof. The reader who, at this point, feels sufficiently
comfortable, might skip the examples and go to the
complete proof below. We first elaborate a bit on examples
similar to the example presented in (c$'$). 

\emph{Example} (1). 
First, consider the sequence
$ \bm \alpha  = ( \omega , 1, 2, 3, \dots)$.
We have $\zeta_{\bm \alpha } = \omega $
and $\alpha_0$ is the only e-special element, but 
$ \alpha _{0}^\heartsuit = 0 $,
hence $\sums _{i < \omega} \alpha _{i} = \omega ^2  $.  
As in the first example
presented in (c) above, we can use the summands from the second
place on, in order to realize $ \omega^2$.  
Where can we insert the elements of the first summand?
The first element of $\alpha_0$ cannot be inserted at the top, if we want
the ormf condition to be satisfied; actually,
the first element of $\alpha_0$ cannot be inserted after
we have used infinitely many elements from the
other summands, thus the first element of $\alpha_0$
must be inserted before we realize  a copy of $ \omega$ in
the realization of  $\omega ^2  $.
Similarly the second element of $\alpha_0$
must be inserted before we realize  a copy of $ \omega 2$. 
The argument shows that, since we want an ormf
realization, we cannot use $\alpha_0$ in order to provide
a realization longer than $ \omega^2$. 

In any case, in order to 
realize $ \omega^2$, the simplest thing to do is to insert all the elements of 
$\alpha_0$ at the beginning, as follows: 
$A_0= [0, \omega ) $,
$A_1=  \{  \omega \} $,
$A_2=  \{  \omega + 1, \omega 2 \} $,
$A_3=  \{  \omega + 2, \omega 2+ 1, \omega 3 \} $\dots \ 
However, we have some freedom on where to put 
the   elements of $\alpha_0$; for example,
$A_0= \{ 3 \} \cup [ \omega  , \omega 2  ) $,
$A_1=  \{ 1 \} $,
$A_2=  \{  2 + 1, \omega 2 \} $,
$A_3=  \{  4, \omega 2+ 1, \omega 3 \} $\dots\  
could have been an alternative possibility. 
We can also let the elements of $\alpha_0$ be cofinal in 
the realization, as in the example in case (c$'$).  

\emph{Example} (1a). 
Now consider the sequence
$ \bm \alpha  = ( \omega + \omega , 1, 2, 3, \dots)$.
We have $\zeta_{\bm \alpha } = \omega $
and $\alpha_0$ is the only e-special element, with 
$ \alpha _{0}^\heartsuit = \omega $,
thus $\sums _{i < \omega} \alpha _{i} = \omega ^2 + \omega $.  
As in (c) above, we can use the summands from the second
place on, in order to realize $ \omega^2$.  
Where can we insert the elements of the first summand
in this case?
As above, the first $ \omega$  elements
 of $\alpha_0$ cannot be inserted at the top, if we want
the ormf condition to be satisfied.
 On the other hand, the last $ \omega$ elements
of $\alpha_0$ can be inserted at the top, respecting ormf.
Thus we realize $ \omega ^2 + \omega$ by letting
$A_0= [0, \omega ) \cup [\omega ^2 , \omega ^2 + \omega)$,
$A_1=  \{  \omega \} $,
$A_2=  \{  \omega + 1, \omega 2 \} $,
$A_3=  \{  \omega + 2, \omega 2+ 1, \omega 3 \} $\dots 
\ Note that the last $ \omega$ elements of $\alpha_0$
(or, at least, an infinite cofinal tail  of them)
\emph{must} be inserted at the end, since otherwise they
are absorbed by (the copy of) $ \omega^2$ and we get a realization of 
$ \omega^2$, not of $ \omega ^2 + \omega$.
  Of course, as in Example (1),
we have some freedom on where to put 
the first $ \omega$ elements of $\alpha_0$; for example,
$A_0= \{ 3 \} \cup [ \omega  , \omega 2  ) \cup [\omega ^2 , \omega ^2 + \omega)$,
$A_1=  \{ 1 \} $,
$A_2=  \{  2 + 1, \omega 2 \} $,
$A_3=  \{  4, \omega 2+ 1, \omega 3 \} $\dots\  
could have been an alternative possibility. However,
we will not need this remark,
in what follows.

\emph{Example} (2). 
The case of  the sequence
$ \bm \alpha  = ( \omega ^2 + \omega  , 1, 2, 3, \dots)$
is simpler.
Again, $\zeta_{\bm \alpha } = \omega $,
 $\alpha_0$ is the only e-special element, but in this case 
$ \alpha _{0}^\heartsuit = \omega ^2 + \omega  = \alpha_0  $,
since $\zeta_{\bm \alpha } + \omega ^2 + \omega = 
\omega + \omega ^2 + \omega = \omega ^2 + \omega
= \alpha _0 $. 
In this case $\sums _{i < \omega} \alpha _{i} =
 \omega ^2  +\omega ^2  +  \omega  $,
we realize the summand $ \omega^2$  coming from   
$ \zeta_{\bm \alpha } \omega $ as usual, and put the first
block of length $ \omega^2$ from $\alpha_0$ at the beginning, 
and the last block of length $ \omega$ at the top.    
  
\smallskip 

We now present the  complete proof
in the general case.
So $\sums _{i < \omega} \alpha_{i} $ is either
\begin{align} \labbel{alli1}  
& \omega ^ {\xi_s+1}  \+ 
\dots \+ \omega ^ {\xi_1+1}
  \+ \omega ^ {\xi_0}
 \+ \alpha _{i_1}^\diamondsuit \+ \dots \+ 
\alpha _{i_h}^\diamondsuit 
&&\text{from \eqref{prima}, or}
\\ \labbel{alli2}  
  & \omega ^ {\xi_s+1}  \+ 
\dots
\+ \omega ^ {\xi_1+1}
  \+ \omega ^ {\xi_0+1}
\+ \alpha _{i_1}^\heartsuit \+ \dots \+ \alpha _{i_h}^\heartsuit  
&&\text{from \eqref{seconda}}  
 \end{align}
from Definition \ref{simpldef}, where we have used Remark \ref{ze}(a).

In both cases,
write $\sums _{i < \omega} \alpha_{i}$
in normal form as  
 $ \omega ^ { \rho _k}  + 
\dots + \omega ^ { \rho _0}$. Note that, since
we are dealing with additive---not Cantor---normal
form, some exponents might be equal, say, $\rho_1 = \rho _0$.  
In order to distinguish the
$\omega ^ { \rho _ \ell}$ in the above finite sum 
from the elements of the sequence $\bm \alpha $,
the $\omega ^ { \rho _\ell}$ will be called S-summands.  
Note that, writing $\alpha _{i_1}^\diamondsuit$\dots\ 
and  $\alpha _{i_1}^\heartsuit$\dots\  in normal form, we get that
each S-summand is either some  $\omega ^ {\xi_\ell+1}$,
possibly $ \omega ^ {\xi_0}$ in case \eqref{prima},
or can be taken to come  from some   $\alpha _{i_p}^\diamondsuit$ 
or from some
$\alpha _{i_p}^\heartsuit$.

The proof is by a finite induction on the value of $k$ 
in $ \omega ^ { \rho _k}  + 
\dots + \omega ^ { \rho _0}$
(the case when $\sums _{i < \omega} \alpha_{i} =0$
is straightforward).

The case $k=0$ has been treated in (a) - (c$'$)
above, with the exception when  
$ \omega ^ { \rho _0}$ is given  by
 some 
$\alpha _{i_p}^\heartsuit$
(if $k=0$ and \eqref{prima} applies, 
 we are necessarily in case (b)).
The case when $k=0$ and 
$ \omega ^ { \rho _0}=\alpha _{i_p}^\heartsuit$
is straightforward, too,
since if 
$\sums _{i < \omega} \alpha_{i} =\omega ^ { \rho _0}$
and $\alpha _{i_p}^\heartsuit =\omega ^ { \rho _0}$
in case \eqref{seconda}, then
 $\zeta_ {\bm \alpha }=0$, hence
 $\alpha_i=0$, for every $i \neq i_p$, since otherwise
there would be more S-summands.  Note that, differently
from case (c), if we modify (a) and (b) by adding some e-special element
to the sequence,
then such an element does contribute to $\sums _{i < \omega} \alpha_{i}$,
hence we can have $k=0$ only in the above-treated cases.

So assume that the theorem holds for all sums
whose value has the normal form $ \omega ^ { \sigma  _{k-1}}  + 
\dots + \omega ^ { \sigma  _0}$, for some fixed $k \geq 1$
and assume that $\sums _{i < \omega} \alpha_{i}=
 \omega ^ { \rho _k}  + \dots + \omega ^ { \rho _0}$. 

We now need to deal with various cases.
Recall that if $\zeta >0$ and 
$\zeta$ is written in normal form as $ \omega ^ {\xi_s}  + 
\dots
+ \omega ^ {\xi_1}
+ \omega ^ {\xi_0}$,  
we have set $\hat \zeta =  \omega ^ {\xi_s}  + 
\dots+ \omega ^ {\xi_1}$.

(i) First, assume that $\sums _{i < \omega} \alpha_{i}$ is 
given by \eqref{alli1} (in particular, $\zeta_ {\bm \alpha } >0$)
 and, furthermore, 
$\omega ^ { \rho _0} = \omega ^ {\xi_0}$,
that is,  $\zeta_ {\bm \alpha }$ contributes to an
S-summand with the smallest exponent.
 Construct another sequence $\bm \beta $ by changing to
$ \hat \zeta_ {\bm \alpha }$ every summand $\alpha_i$
with  $ \hat \zeta_ {\bm \alpha } < \alpha _i <  \zeta_ {\bm \alpha }$
and leaving all the other summands unchanged.
Thus $  \zeta_ {\bm \beta  } =  \hat \zeta_ {\bm \alpha }$.
Indeed, there are arbitrarily large 
(hence, infinitely many) $\alpha_i$ strictly below 
the limit ordinal $\zeta_ {\bm \alpha }$,
since \eqref{prima} applies to $\sums _{i < \omega} \alpha_{i}$,
 hence there are
infinitely many $\beta_i$ equal to  $\hat \zeta_ {\bm \alpha }$.
Moreover,  recall from Definition \ref{simpldef}
that in case \eqref{prima}   there are only finitely many
$\alpha_i \geq \zeta _ {\bm \alpha }  $, 
hence there are only finitely many
$ \beta _i > \hat \zeta_ {\bm \alpha }$.
We have showed that $ \zeta_ {\bm \beta  } = \hat \zeta_ {\bm \alpha } $.
Since in case \eqref{prima} 
$\zeta _ {\bm \alpha } = \varepsilon _ {\bm \alpha }$,
the indexes of the e-special elements are the same
for both sequences,
the elements are equal, and
 $ \beta  _{i_0}^\heartsuit = \alpha _{i_0}^\diamondsuit$, \dots,
$ \beta  _{i_h}^\heartsuit = \alpha _{i_h}^\diamondsuit$,
since the definitions refer 
to, respectively,  $  \zeta_ {\bm \beta  } $ and $ \hat \zeta_ {\bm \alpha }$,
which are equal.
So case \eqref{seconda} applies to $\bm \beta $,
giving  $\sums _{i < \omega} \beta _{i}=
 \omega ^ { \rho _k}  + \dots + \omega ^ { \rho _1}$,
since the only S-summand which gets lost is 
$ \omega ^ {\xi_0}$, equal by assumption  to $\omega ^ { \rho _0}  $.

By the inductive assumption we have an ormf realization 
$R$ of
$\sums _{i < \omega} \beta _{i}$ by the $\beta_i$. 
Consider $R$ as an initial segment of the realization of 
$\sums _{i < \omega} \alpha  _{i}$ we want to construct.
The elements we are left with are the segments
$I_i = [\hat \zeta_ {\bm \alpha }, \alpha _i)$,
for those $\alpha_i$ such that  
$ \hat \zeta_ {\bm \alpha } < \alpha _i <  \zeta_ {\bm \alpha }$.
Since such $\alpha_i$ are unbounded below $  \zeta_ {\bm \alpha }$,
the above segments have lengths   unbounded below 
$ \omega ^ {\xi_0} = \omega ^ { \rho _0}  $, since 
$ \zeta_ {\bm \alpha } = \hat  \zeta_ {\bm \alpha } +  \omega ^ {\xi_0}$. 
Since we are in case \eqref{prima}, $\xi_0$ is limit,
hence, as in (b), we can realize the order-type of   $ \omega ^ {\xi_0}$
by using the intervals $I_i$. Now it is enough to put a copy of this
last realization atop  the realization $R$ given by the inductive hypothesis,
getting a realization of   
$\sums _{i < \omega} \beta _{i} +  \omega ^ {\xi_0}=
 \omega ^ { \rho _k}  + \dots + \omega ^ { \rho _1}+ \omega ^ { \rho _0} 
= \sums _{i < \omega} \alpha  _{i}  $.
The realization is still ormf, since all the elements of the 
$I_i $s are $ \geq  \hat \zeta_ {\bm \alpha }$, while, except for finitely
many indexes, all the elements contributing to  the realization $R$ 
are $ <  \hat \zeta_ {\bm \alpha }$.

(ii) Now assume again that $\sums _{i < \omega} \alpha_{i}$ is 
given by \eqref{alli1}, but this time assume that  
$\omega ^ {\xi_0} > \omega ^ { \rho _0} $.
Note that we allow $\rho _0 = 0$, namely,  $\sums _{i < \omega} \alpha_{i}$
might be a successor ordinal; however, there will be no difference
in the proof between the cases when $\sums _{i < \omega} \alpha_{i}$
is successor or limit.
Since $\omega ^ {\xi_0} > \omega ^ { \rho _0} $, a smallest
S-summand
 $\omega ^ { \rho _0}$ comes from 
 some e-special element $\alpha _{i_p}^\diamondsuit$,
say, $\alpha _{i_p}^\diamondsuit = \alpha _{i_p}^\bullet + \omega ^ { \rho _0} $,
hence
$\alpha _{i_p} = \hat \zeta_ {\bm \alpha } + \alpha _{i_p}^\diamondsuit
 =\hat \zeta_ {\bm \alpha }+ \alpha _{i_p}^\bullet + \omega ^ { \rho _0} $.
Note that we must have 
$\alpha _{i_p} \geq \zeta_ {\bm \alpha }$,
since  $\alpha _{i_p}$ is e-special and, as remarked shortly after
the definition, in case \eqref{prima}
   $ \varepsilon _ {\bm \alpha } = \zeta_ {\bm \alpha }$. 
By the above inequalities, 
$\hat \zeta_ {\bm \alpha }+ \alpha _{i_p}^\bullet + \omega ^ { \rho _0} =
\alpha _{i_p} \geq \zeta_ {\bm \alpha } =
 \hat \zeta_ {\bm \alpha } + \omega ^ {\xi_0}$,
hence
$\alpha _{i_p}^\bullet + \omega ^ { \rho _0}  \geq 
  \omega ^ {\xi_0}$.
Since $\omega ^ {\xi_0} > \omega ^ { \rho _0} $,
necessarily 
$\alpha _{i_p}^\bullet  \geq 
  \omega ^ {\xi_0}$, hence 
$\hat \zeta_ {\bm \alpha }+ \alpha _{i_p}^\bullet \geq
\hat \zeta_ {\bm \alpha }+  \omega ^ {\xi_0}=\zeta_ {\bm \alpha }$. 

Let $ \beta _{i_p}= \hat \zeta_ {\bm \alpha }+ \alpha _{i_p}^\bullet$
and $\beta_i = \alpha _i$ for all $i \neq i_p$.
Case \eqref{prima} still applies,
$\zeta_ {\bm \beta } =  \zeta_ {\bm \alpha }$ and
$ \beta _{i_p}$ is still e-special by the last inequality in the
above paragraph. 
Moreover, by construction, 
$ \beta  _{i_p}^\diamondsuit = \alpha _{i_p}^\bullet$,
so that  
$\sums _{i < \omega} \beta _{i} =
 \omega ^ { \rho _k}  + \dots + \omega ^ { \rho _1}$.
By the inductive assumption we have an ormf realization $R$ 
of $\sums _{i < \omega} \beta _{i}$ by the $\beta_i$.
Just append atop   $R$ a copy of the segment 
$[\hat \zeta_ {\bm \alpha }+ \alpha _{i_p}^\bullet, \alpha _{i_p})= 
\alpha _{i_p} \setminus \beta _{i_p}$ in order to get a
realization of   
$\sums _{i < \omega} \beta _{i} +  \omega ^ {\xi_0}=
 \omega ^ { \rho _k}  + \dots + \omega ^ { \rho _1}+ \omega ^ { \rho _0} 
= \sums _{i < \omega} \alpha  _{i}  $.
The realization is ormf, since we have showed that 
$\hat \zeta_ {\bm \alpha }+ \alpha _{i_p}^\bullet \geq
\zeta_ {\bm \alpha }$, while 
 all the elements contributing to  the realization $R$ 
are $ <   \zeta_ {\bm \alpha }$, except for  
 the finite set of e-special elements.

(iii) Next, assume that $\sums _{i < \omega} \alpha_{i}$ is 
given by \eqref{alli2} and that 
$\omega ^ { \rho _0} = \omega ^ {\xi_0+1}$,
that is,  $\zeta_ {\bm \alpha } \x \omega $ contributes to an
S-summand with the smallest exponent.
In the present case, assume further that 
$\zeta_ {\bm \alpha }$ is successor, thus 
$\xi_0 =0$, 
$\zeta_ {\bm \alpha }=  \hat \zeta_ {\bm \alpha } +1$ and
there are 
infinitely many $\alpha_i$ equal to  $ \zeta_ {\bm \alpha }$.
Let $\beta_i =  \hat \zeta_ {\bm \alpha }$ if
 $ \alpha _i =   \zeta_ {\bm \alpha }$ and $\beta_i = \alpha _i$ otherwise,
thus $ \zeta_ {\bm \beta  } = \hat \zeta_ {\bm \alpha }$
and   \eqref{alli2} still applies to $\sums _{i < \omega} \beta _{i}$. 
The indexes of the e-special elements are the same, 
since no $\beta_i$ is equal to $ \zeta_ {\bm \alpha }$
and $ \varepsilon _ {\bm \alpha } = \zeta_ {\bm \alpha }+1 $,
$ \varepsilon _ {\bm \beta } = \zeta_ {\bm \beta  }+1 =
\hat \zeta_ {\bm \alpha } +1 = \zeta_ {\bm \alpha }$,
by Remark \ref{determ}.
Moreover, 
if $\alpha _{i_p}$ is e-special, then
$\alpha _{i_p} = \zeta_ {\bm \alpha } + \alpha _{i_p}^\heartsuit$,
holding by the definition of   $\alpha _{i_p}^\heartsuit$, 
thus
$ \beta  _{i_p} = \alpha _{i_p} = \zeta_ {\bm \alpha } + \alpha _{i_p}^\heartsuit
= \zeta_ {\bm \beta  } + 1 + \alpha _{i_p}^\heartsuit =
\zeta_ {\bm \beta  } +  \alpha _{i_p}^\heartsuit $. The
last identity follows from the observation that
 $ \alpha _{i_p}^\heartsuit $ is infinite, since 
$ \rho _0 = \xi_0+1$, hence
$\omega ^ { \rho _0} $ is infinite, thus 
 $ \alpha _{i_p}^\heartsuit  \geq \omega ^ { \rho _0}$,
since $ \alpha _{i_p}^\heartsuit$ does contribute to 
$\sums _{i < \omega} \alpha  _{i}$
(as noticed in Remark \ref{ze}(b), the only case in which
some $ \alpha _{i_p}^\heartsuit$ might be $0$ is when 
$ \varepsilon _ {\bm \alpha }$ is limit).

Thus $\sums _{i < \omega} \beta _{i}=
 \omega ^ { \rho _k}  + \dots + \omega ^ { \rho _1} $
and, by the inductive assumption, we have an ormf realization 
$\sums _{i < \omega} \beta _{i}$ by the $\beta_i$.
Just append atop the above realization an
$ \omega$-chain obtained from the singletons remained 
from those infinitely many
$\alpha_i$ equal to  $\zeta_ {\bm \alpha }= \hat \zeta_ {\bm \alpha } +1$,
 which have been
changed to $\beta_i = \hat \zeta_ {\bm \alpha }$.
In this way we ormf realize  $\sums _{i < \omega} \alpha  _{i}=
 \omega ^ { \rho _k}  + \dots + \omega ^ { \rho _1} + \omega ^ { \rho _0}$
by the $\alpha_i$.

(iv) Still  assume that $\sums _{i < \omega} \alpha_{i}$ is 
given by \eqref{alli2}, that 
$\omega ^ { \rho _0} = \omega ^ {\xi_0+1}$,
but in this case
$\zeta_ {\bm \alpha }$ is limit.
Thus we have only finitely many $\alpha_i$
which are   $ > \zeta_ {\bm \alpha }$, and,
possibly, infinitely many $\alpha_i=  \zeta_ {\bm \alpha }$
(hence $ \varepsilon _ {\bm \alpha } = \zeta_ {\bm \alpha }+1$,
by Remark \ref{determ}(g)), or, 
possibly, only finitely many (perhaps none)
such $\alpha_i$, but in this latter case
we have arbitrarily large $\alpha_i < \zeta_ {\bm \alpha }$
(and $ \varepsilon _ {\bm \alpha } = \zeta_ {\bm \alpha }$).  
Note that in the latter case $\xi_0$ is a successor ordinal,
otherwise case \eqref{prima} would apply.  

Construct a sequence $\bm \beta $ by changing to
$ \hat \zeta_ {\bm \alpha }$ every summand $\alpha_i$
with  $ \hat \zeta_ {\bm \alpha } < \alpha _i \leq  \zeta_ {\bm \alpha }$
and leaving all the other summands unchanged,
thus $  \zeta_ {\bm \beta  } =  \hat \zeta_ {\bm \alpha }$, since
  there are only finitely many
$\alpha_i > \zeta_ {\bm \alpha }$
 and, in any case, infinitely many $\alpha_i$ have been moved to 
$ \hat \zeta_ {\bm \alpha }$.
In particular,  
 \eqref{alli2} applies, as well, to the computation of 
$\sums _{i < \omega} \beta _{i}$.
 In fact, the same construction has been performed in case
(iii) above, but we have considered (iii) as a separate case
for simplicity.   

If $ \varepsilon _ {\bm \alpha } = \zeta_ {\bm \alpha } + 1$,
the indexes of the e-special elements are the same for both sequences.
If $ \varepsilon _ {\bm \alpha } = \zeta_ {\bm \alpha }$ 
and $\alpha_i =  \zeta_ {\bm \alpha }$, then $\alpha_i$ is 
e-special with respect to $\bm \alpha$, but    
 $\beta_i$ is not 
e-special with respect to $\bm \beta $, since
$ \beta _i = \hat  \zeta_ {\bm \alpha }=  \zeta_ {\bm \beta  }<
 \varepsilon _ {\bm \beta  }$; however, in this case,
$ \alpha  _{i}^\heartsuit= 0 $, thus $\alpha_i$ does not contribute
to   $\sums _{i < \omega} \alpha  _{i}$, and we can discard
such elements.
In conclusion, the set of those indexes such that 
 $\alpha_i$ or $\beta_i$ do contribute to the 
corresponding sums are the same.
So, assuming that $\alpha  _{i_p}^\heartsuit$ does contribute to 
 $\sums _{i < \omega} \alpha  _{i}$, expressing
$\alpha  _{i_p}^\heartsuit$ in normal form, we have 
$\alpha _{i_p}^\heartsuit = \alpha _{i_p}^\bullet + \omega ^ { \rho _q} $,
for some ordinal $\alpha _{i_p}^\bullet$, possibly, 
$\alpha _{i_p}^\bullet=0$, and with   $\rho _q \geq \xi_0+1$,
by the assumption that 
$\omega ^ { \rho _0} = \omega ^ {\xi_0+1}$.
Thus $\alpha  _{i_p} =
  \zeta_ {\bm \alpha } + \alpha  _{i_p}^\heartsuit > \zeta_ {\bm \alpha }$, 
hence
$ \beta  _{i_p} = \alpha  _{i_p} =
  \zeta_ {\bm \alpha } + \alpha  _{i_p}^\heartsuit=
\hat \zeta_ {\bm \alpha } +  \omega ^ {\xi_0} +
 \alpha _{i_p}^\bullet + \omega ^ { \rho _q}=
 \zeta_ {\bm \beta  } + 
 \alpha _{i_p}^\bullet + \omega ^ { \rho _q}$, since 
in the above \emph{ordinal}  sum $ \omega ^ {\xi_0}$
is absorbed by either  $ \alpha _{i_p}^\bullet $
or $ \omega ^ { \rho _q}$ because
 $\rho _q \geq \xi_0+1 > \xi_0$. 
From $ \beta  _{i_p} = \zeta_ {\bm \beta  } + 
 \alpha _{i_p}^\bullet + \omega ^ { \rho _q}$
we get 
$ \beta   _{i_p}^\heartsuit=\alpha _{i_p}^\bullet + \omega ^ { \rho _q}
=  \alpha  _{i_p}^\heartsuit$.
In conclusion, elements with the same indexes contribute to 
the corresponding sums, and they contribute in the same way.
This means that
$\sums _{i < \omega} \beta _{i}=
 \omega ^ { \rho _k}  + \dots + \omega ^ { \rho _1} $,
since $\zeta_ {\bm \alpha   } = \zeta_ {\bm \beta  } + \omega ^ {\xi_0}$
and $\omega ^ { \rho _0} =\omega ^ {\xi_0+1}$.

Now the proof proceeds in a way similar to case (i).
By the inductive assumption we have an ormf realization 
$R$ of
$\sums _{i < \omega} \beta _{i}$ by the $\beta_i$. 
If $ \varepsilon _ {\bm \alpha } = \zeta_ {\bm \alpha }+1$,
we have  infinitely many $\alpha_i $ 
equal to $   \zeta_ {\bm \alpha }$
which have been changed to $\beta_i = \hat \zeta_ {\bm \alpha }$.
Thus we are left with countably many ``tails''
$[\hat \zeta_ {\bm \alpha }, \alpha _i)$    of length $ \omega ^ {\xi_0}$
which can be put together as in (a) in order to get a chain
of length $ \omega ^ {\xi_0+1}$.
Adding this chain atop $R$ we get a realization of 
$\sums _{i < \omega} \alpha  _{i}$.
If, on the other hand, $ \varepsilon _ {\bm \alpha } = \zeta_ {\bm \alpha }$,
we have  at most finitely many $\alpha_i $ 
equal to $   \zeta_ {\bm \alpha }$, but
 arbitrarily large $\alpha_i < \zeta_ {\bm \alpha }$,
thus we are left 
with segments $[\hat \zeta_ {\bm \alpha }, \alpha _i)$ 
   of arbitrary large lengths $ < \omega ^ {\xi_0}$.
As mentioned above, $\xi_0$ is successor, otherwise we are
in case \eqref{prima}, contrary to our assumption;
thus we can construct the final chain of the desired 
realization arguing as in (c$'$) above.  In both cases
the final realization is ormf, since all but finitely many 
$\beta_i$ are $\leq   \zeta_ {\bm \beta }=   \hat \zeta_ {\bm \alpha  }$,
while in the   added chain we always use elements
$\geq \hat \zeta_ {\bm \alpha  }$.

(v) The final case is when $\sums _{i < \omega} \alpha_{i}$ is 
given by \eqref{alli2} and  
$\omega ^ {\xi_0+1} > \omega ^ { \rho _0} $, thus a smallest
S-summand
 $\omega ^ { \rho _0}$ comes from 
 some e-special element $\alpha _{i_p}^\heartsuit$,
say, $\alpha _{i_p}^\heartsuit = \alpha _{i_p}^\bullet + \omega ^ { \rho _0} $,
hence
$\alpha _{i_p} =  \zeta_ {\bm \alpha } + \alpha _{i_p}^\heartsuit
 = \zeta_ {\bm \alpha }+ \alpha _{i_p}^\bullet + \omega ^ { \rho _0} $,
possibly, with $ \alpha _{i_p}^\bullet =0$.
Let $ \beta _{i_p}=  \zeta_ {\bm \alpha }+ \alpha _{i_p}^\bullet$
and $\beta_i = \alpha _i$ for all $i \neq i_p$.
Case \eqref{prima} still applies,
$\zeta_ {\bm \beta } =  \zeta_ {\bm \alpha }$ and
$ \beta _{i_p}$ is still e-special,
unless possibly when $ \alpha _{i_p}^\bullet =0$. 
Since $\zeta_ {\bm \beta } =  \zeta_ {\bm \alpha }$,
$ \beta  _{i_p}^\heartsuit = \alpha _{i_p}^\bullet$,
so that  
$\sums _{i < \omega} \beta _{i} =
 \omega ^ { \rho _k}  + \dots + \omega ^ { \rho _1}$.
As custom by now, by the inductive assumption we have an ormf realization 
 $R$ of $\sums _{i < \omega} \beta _{i}$ by the $\beta_i$.
If we append atop $R$   a copy of the segment 
$[ \zeta_ {\bm \alpha }+ \alpha _{i_p}^\bullet, \alpha _{i_p})= 
\alpha _{i_p} \setminus \beta _{i_p}$, we get a
realization of   
$\sums _{i < \omega} \beta _{i} +  \omega ^ { \rho _0}=
 \omega ^ { \rho _k}  + \dots + \omega ^ { \rho _1}+ \omega ^ { \rho _0} 
= \sums _{i < \omega} \alpha  _{i}  $.
As in the above cases, the realization is ormf, since  
$ \zeta_ {\bm \alpha }+ \alpha _{i_p}^\bullet \geq
\zeta_ {\bm \alpha }$, while 
 all the elements contributing to  the realization $R$ 
are $ <   \zeta_ {\bm \alpha }$, except for the elements belonging
to the finite set of e-special elements.

With the previous case we have covered all
the possible cases, hence we have concluded the proof that
 $\sums _{i < \omega} \alpha_{i}$
can be realized as an  ormf mixed sum of the $\alpha_i$.

It remains to show that 
that if  $\gamma$
is a realization of $( \alpha _i) _{i < \omega} $ 
as an ormf mixed sum, then
 $ \gamma \leq \sums _{i < \omega} \alpha _{i}$.
This is almost immediate from Lemma \ref{orf}. 

Suppose by contradiction that 
$\gamma> \eta =\sums _{i < \omega} \alpha _{i} $
and $\gamma$ is an ormf realization of
$( \alpha _i) _{i < \omega} $.
Choose the counterexample in such a way that $\eta$ 
is minimal and define the $B_i$ and the
$\beta_i$   as in Lemma \ref{orf}. 
By construction, 
$\eta =\sums _{i < \omega} \alpha _{i} $
is realized as an ormf mixed sum
of $( \beta_i) _{i < \omega} $.  
By  Lemma \ref{orf}, 
$\sums _{i < \omega} \beta  _{i}
< \sums _{i < \omega} \alpha _{i} = \eta  < \gamma $
and this contradicts the minimality of $\eta$,
since we have got a counterexample with 
$\gamma'= \eta > \eta' = \sums _{i < \omega} \beta  _{i}$. 
 \end{proof}

\begin{remark} \labbel{megl}
Recall that we have denoted by $\suma$ the operation
obtained by always using Clause \eqref{seconda} in 
Definition \ref{simpldef}.
 
The first situation in which we apply \eqref{prima},
instead, that is,  the first case in which $\suma$ and $\sums$
differ, occurs when $\zeta _{\bm \alpha }= \omega ^ \omega  $, say,
for the sequence given by $\alpha_i = \omega ^i$,
for which $\sums _{i < \omega} \alpha_{i}= \omega ^ \omega $,
while $\suma _{i < \omega} \alpha_{i} = \omega ^{ \omega +1} $.   
 
As we have seen in Theorem \ref{oc},    $\omega ^ \omega$
can be realized as a mixed sum of the $\alpha_i$s
 in such a way that some loose finiteness condition---the
ormf condition---is satisfied.
As a matter of fact, also  $\omega ^ {\omega+1}$
can be realized as a mixed sum of the $\alpha_i$s:
just partition $ \omega$ into countably many disjoint
countable subsets and, for each such subset, use those $\alpha_i$
whose indexes belongs to the subset in order to realize
a copy of $ \omega^ \omega $, as in case  (b)
in the proof of Theorem \ref{oc}.
If we append such realizations one after the other in
a chain of length $ \omega$, we get  
 $\omega ^ {\omega+1}$.
However, we believe that such a realization is not
sufficiently ``natural''; in fact, by rearranging the above 
partial realizations into a chain of length $\delta$,
for $\delta$ an infinite countable ordinal,  
we get a realization of $ \omega^ \omega \delta $, that is,
we get arbitrarily large countable ordinals.
In any case, all the above realizations do not satisfy the ormf 
property, which itself is a rather weak request.
Moreover, as we will see in Appendix I,
$\sums$ has also a rather simple game-theoretical characterization
(essentially, a reformulation of the other order-theoretical 
characterization we are going to present in the next section). 
 \end{remark}

\section{A characterization of $\sums $ 
as the rank in  a well-founded order} \labbel{rank}

\begin{definition} \labbel{rankd}
We introduce a strict  partial order
$\prec$ on $ \omega$-indexed sequences of ordinals.  
Recall that we write $\bm \alpha$ for $( \alpha  _i) _{i < \omega} $.
We let $\bm \beta \prec \bm\alpha$ 
if 
there are some $\bar \imath < \omega$, 
some finite  set $F \subseteq  \omega$ 
 and some $\beta $ 
such that 
\begin{equation} \labbel{defgg}
\begin{aligned} 
\beta_{\bar \imath} &\leq \beta < \alpha _{\bar \imath},
\\ 
\beta_i &\leq  \alpha _i, && \text{for every   $i \in  \omega $,} 
\\
\beta_i & \leq  \beta,   && \text{for every $ i  \notin F$.}
\end{aligned}
\end{equation}      
Roughly, all the ``large'' elements of 
$\bm\alpha$ are made $ \leq \beta$,
except possibly for a finite set of elements;
moreover, at least one element---corresponding to the index 
 $\bar \imath $---strictly decreases to an ordinal $ \leq \beta$. 
\end{definition}   

\begin{lemma} \labbel{wf}
The relation $\prec$ is a strict well-founded order
on the class of $ \omega$-indexed ordinal sequences. 
 \end{lemma}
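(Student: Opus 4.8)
The plan is to check the two order axioms (irreflexivity and transitivity) by direct inspection of \eqref{defgg}, and then to obtain well-foundedness from the strict monotonicity of $\sums$ already proved in Theorem \ref{simpth}. The order-axiom part is routine; the one genuinely informative step is recognizing that $\sums$ strictly decreases along $\prec$, after which well-foundedness is immediate.

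First I would dispose of irreflexivity. If we had $\bm\alpha \prec \bm\alpha$, then taking $\bm\beta = \bm\alpha$ in Definition \ref{rankd}, the first line of \eqref{defgg} would read $\alpha_{\bar\imath} \leq \beta < \alpha_{\bar\imath}$, which is impossible. For transitivity, suppose $\bm\gamma \prec \bm\beta \prec \bm\alpha$; let $\bar\imath, F, \beta$ witness $\bm\beta \prec \bm\alpha$ and $\bar\jmath, G, \beta'$ witness $\bm\gamma \prec \bm\beta$. I claim $\bar\jmath$, $G$, $\beta'$ already witness $\bm\gamma \prec \bm\alpha$. Indeed $\gamma_i \leq \beta_i \leq \alpha_i$ for every $i$; from $\beta' < \beta_{\bar\jmath} \leq \alpha_{\bar\jmath}$ together with $\gamma_{\bar\jmath} \leq \beta'$ we get $\gamma_{\bar\jmath} \leq \beta' < \alpha_{\bar\jmath}$; and $\gamma_i \leq \beta'$ for $i \notin G$ holds by hypothesis. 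The point that makes this painless is that the last line of \eqref{defgg} constrains only the smaller sequence and the auxiliary ordinal $\beta'$, never the larger sequence, so the inner witness transfers verbatim, and only the weak-monotonicity clause $\beta_i \leq \alpha_i$ of the outer relation is needed. Asymmetry then follows from irreflexivity and transitivity in the usual way, so $\prec$ is a strict order.

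For well-foundedness the plan is to show that the map $\bm\alpha \mapsto \sums_{i<\omega} \alpha_i$ is strictly $\prec$-decreasing into the ordinals. Suppose $\bm\beta \prec \bm\alpha$ with witnesses $\bar\imath$, $F$, $\beta$. Since $\beta_i \leq \beta$ for all $i \notin F$ and $F$ is finite, the set $\{\, i < \omega \mid \beta < \beta_i \,\}$ is finite, so $\zeta_{\bm\beta} \leq \beta$ by the very definition of $\zeta_{\bm\beta}$. Combined with $\beta < \alpha_{\bar\imath}$ this yields $\zeta_{\bm\beta} < \alpha_{\bar\imath}$, while the first line of \eqref{defgg} gives $\beta_{\bar\imath} < \alpha_{\bar\imath}$; together with $\beta_i \leq \alpha_i$ for every $i$, these are exactly the hypotheses of clause \eqref{s3} (read with $\bm\beta$ in the role of the smaller and $\bm\alpha$ in the role of the larger sequence). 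By the part of Theorem \ref{simpth} already established, namely that $\sums$ is strictly monotone on z-special elements, we conclude $\sums_{i<\omega}\beta_i < \sums_{i<\omega}\alpha_i$. Hence any infinite $\prec$-descending chain would produce an infinite strictly descending chain of ordinals, which is impossible, and $\prec$ is well-founded.

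I do not expect a serious obstacle here. The only care needed is bookkeeping with the direction of the inequalities in \eqref{s3}, where the larger sequence is written $\bm\beta$, so one must apply it with the roles reversed relative to the notation of Definition \ref{rankd}. The single genuinely load-bearing observation is the reading $\zeta_{\bm\beta} \leq \beta$, which converts the finiteness clause of \eqref{defgg} into the z-special hypothesis of \eqref{s3}; once that is in place the lemma reduces to monotonicity facts already in hand.
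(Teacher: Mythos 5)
Your proof is correct, and the order-axiom part is essentially the paper's (your transitivity step is in fact slightly slicker: you observe that the inner witness $(\bar\jmath, G, \beta')$ transfers verbatim to witness $\bm \gamma \prec \bm \alpha$, whereas the paper assembles a new witness from $F \cup G$, $\min\{\beta,\gamma\}$ and a case split; both work). The genuine divergence is in well-foundedness. The paper's argument is purely combinatorial and self-contained: starting from the same observation you make ($\zeta_{\bm \beta} \leq \beta$ because $F$ is finite), it notes that any $\prec$-step whose witness index $\bar\imath$ satisfies $\alpha_{\bar\imath} \leq \zeta_{\bm \alpha}$ strictly decreases $\zeta$; since a sequence has only finitely many z-special elements and there is no infinite descending sequence of ordinals, a putative infinite $\prec$-chain would have to drive $\zeta$ down to $0$ in finitely many steps and then terminate, a contradiction. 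You instead use the part of Theorem \ref{simpth} proved in Section \ref{simp}: the map $\bm \alpha \mapsto \sums_{i<\omega}\alpha_i$ is strictly $\prec$-decreasing into the ordinals, because $\zeta_{\bm \beta} \leq \beta < \alpha_{\bar\imath}$ and $\beta_{\bar\imath} \leq \beta < \alpha_{\bar\imath}$ are exactly the hypotheses of \eqref{s3} with the roles of the two sequences reversed. This is legitimate and non-circular, since strict monotonicity of $\sums$ on z-special elements is established before Section \ref{rank} and without any reference to $\prec$; your step is in effect the special case $\sumt = \sums$ of the paper's own Lemma \ref{ranklem}, and your direct verification of \eqref{s3} even avoids the case split (z-special index versus decreasing $\zeta$) and the appeal to Remark \ref{e} made there. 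What the paper's longer proof buys is independence: well-foundedness of $\prec$, hence the existence of the rank $v$, is obtained as a purely order-theoretic fact, not resting on the explicit construction of $\sums$ and the computations of Section \ref{simp} — the natural choice if one wants the rank to serve as an autonomous, alternative definition of the infinitary sum. What your proof buys is brevity, at the price of tying the lemma to the operation it is meant to characterize.
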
 

\begin{proof}
If $\bm \beta \prec \bm \alpha$, then, for every $i < \omega$,  
 $\beta_i\leq \alpha_i $. Moreover, 
$\beta_ {\bar \imath} < \alpha_{\bar \imath} $.
This implies that $\prec$ is irreflexive. 
As for transitivity, let $\bm \gamma  \prec \bm \beta $
be witnessed by 
$\bar \jmath $, 
 $G $ and
 $ \gamma  < \beta  _{\bar \jmath}$.
Then 
$\bm \gamma  \prec \bm \alpha  $ is witnessed
by  $F \cup G$, $\min \{\beta, \gamma  \}$
and $\bar \imath$, if $\beta \leq \gamma $, or
$\bar \jmath$, if $ \gamma  \leq \beta  $.

Finally, we check that $\prec$ is well-founded.
Indeed, if $\bm \beta   \prec \bm \alpha  $
is witnessed by some $\beta$ and $F$, then  
$ \zeta_{\bm \beta} \leq \beta $, since $F$ is finite.
This implies that if the chosen $\bar{\imath}$ 
is such that $\alpha_ { \bar{\imath} } \leq \zeta_{\bm \alpha }$, 
then   $ \zeta_{\bm \beta } < \zeta_{\bm \alpha } $.
Since there are only a finite number of z-special elements
(that is, summands $> \zeta_{\bm \alpha }$) and 
there is no infinite decreasing sequence on an ordinal,
if we are given an infinite $\prec$-decreasing chain to which
$\bm \alpha$ belongs, then in a finite number of steps below
$ \bm \alpha$ there is a $\prec$ inequality  
witnessed by some 
 $ \delta _{ \bar{k} } \leq \zeta_{\bm \alpha }$. As we have 
mentioned, this implies that 
the value of  $\zeta$
 strictly decreases. Since $\zeta$ is itself an ordinal and, again since
there is no infinite decreasing sequence of ordinals, 
after a finite number of steps  we reach a sequence with
$\zeta =0$. This means that $ \zeta $ cannot be decreased further
and that the sequence is constantly $0$, with the possible exception
of a finite number of elements. But each $\prec$-down step
decreases at least one such element, hence in the end we 
necessarily reach
the constantly $0$ sequence after a finite number of steps.  
This shows that there is no infinite $\prec$-decreasing sequence. 
 \end{proof}    

Since $\prec$ is well-founded, each sequence $\bm \alpha$
of ordinals has a rank (relative to $\prec$). We will denote
this rank by   $v(\bm \alpha)$. 

\begin{lemma} \labbel{ranklem}
Suppose that $\sumt$ is an infinitary operation which 
is strictly monotone on z-special elements, that is, 
$\sumt$ satisfies Clause \eqref{s3}.

Then, for every 
sequence $ \bm \alpha = ( \alpha_i) _{i < \omega} $ of ordinals,
 $v(\bm \alpha) \leq \sumt _{i < \omega} \alpha_{i}$.
\end{lemma}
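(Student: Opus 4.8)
The plan is to argue by well-founded induction along $\prec$, using the recursive characterization of rank guaranteed by Lemma \ref{wf}, namely $v(\bm \alpha) = \sup \{ v(\bm \beta) + 1 \mid \bm \beta \prec \bm \alpha \}$. Under this formula it suffices to show, for every $\bm \beta \prec \bm \alpha$, that $v(\bm \beta) < \sumt_{i < \omega} \alpha_i$; combining with the inductive hypothesis $v(\bm \beta) \leq \sumt_{i < \omega} \beta_i$, the whole problem reduces to the single inequality
\[
\sumt_{i < \omega} \beta_i < \sumt_{i < \omega} \alpha_i \qquad \text{whenever } \bm \beta \prec \bm \alpha .
\]

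First I would unwind the definition of $\bm \beta \prec \bm \alpha$ from Definition \ref{rankd}: there are some $\bar{\imath}$, a finite $F \subseteq \omega$ and an ordinal $\beta$ with $\beta_{\bar{\imath}} \leq \beta < \alpha_{\bar{\imath}}$, with $\beta_i \leq \alpha_i$ for every $i$, and with $\beta_i \leq \beta$ for every $i \notin F$. My aim is to verify that the pair $(\bm \beta, \bm \alpha)$, with $\bm \beta$ playing the role of the smaller sequence in Clause \eqref{s3}, satisfies the hypotheses of \eqref{s3} as witnessed by the index $\bar{\imath}$. The componentwise inequality $\beta_i \leq \alpha_i$ holds by definition, and $\beta_{\bar{\imath}} \leq \beta < \alpha_{\bar{\imath}}$ yields the required strict increase $\beta_{\bar{\imath}} < \alpha_{\bar{\imath}}$ at that index.

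The one point needing a short argument—and the crux of the proof—is the remaining hypothesis $\zeta_{\bm \beta} < \alpha_{\bar{\imath}}$ demanded by \eqref{s3}. Here I would reuse exactly the observation already exploited in the proof of Lemma \ref{wf}: since $\beta_i \leq \beta$ for every $i \notin F$ and $F$ is finite, the set $\{ i < \omega \mid \beta < \beta_i \}$ is finite, so by the very definition of $\zeta_{\bm \beta}$ we obtain $\zeta_{\bm \beta} \leq \beta$. Combining this with $\beta < \alpha_{\bar{\imath}}$ gives $\zeta_{\bm \beta} < \alpha_{\bar{\imath}}$, as needed.

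With all three hypotheses of \eqref{s3} in place, strict monotonicity on z-special elements delivers $\sumt_{i < \omega} \beta_i < \sumt_{i < \omega} \alpha_i$, whence $v(\bm \beta) < \sumt_{i < \omega} \alpha_i$, that is, $v(\bm \beta) + 1 \leq \sumt_{i < \omega} \alpha_i$. Taking the supremum over all $\bm \beta \prec \bm \alpha$ then closes the induction. I do not expect a serious obstacle: the proof is essentially a repackaging of the estimate $\zeta_{\bm \beta} \leq \beta$ into the precise format of \eqref{s3}, and the only real care required is bookkeeping—keeping straight which of the two sequences is the smaller one when invoking Clause \eqref{s3}, since there the roles of $\bm \alpha$ and $\bm \beta$ are reversed relative to the present setting.
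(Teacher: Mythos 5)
Your proposal is correct and takes essentially the same route as the paper: induction grounded on the rank characterization, reduced to the key strict inequality $\sumt_{i < \omega} \beta_{i} < \sumt_{i < \omega} \alpha_{i}$ for $\bm \beta \prec \bm \alpha$, which rests on the same estimate $\zeta_{\bm \beta } \leq \beta$ already isolated in the proof of Lemma \ref{wf}. The only difference is cosmetic: you verify the hypotheses of Clause \eqref{s3} directly and uniformly (with the roles of the two sequences correctly swapped), whereas the paper splits into the case where $\alpha_{\bar \imath}$ is z-special and the case $\zeta_{\bm \beta } < \zeta_{\bm \alpha }$, invoking strict monotonicity on $\zeta$ (Remark \ref{e}) for the latter; your one-shot verification is, if anything, slightly more direct.
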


 \begin{proof} 
By induction on $v(\bm \alpha)$.
 (The induction can be equivalently carried
over on $\sumt _{i < \omega} \alpha_{i}$.)   
 If 
$v(\bm \alpha)=0$, then  
the conclusion is straightforward, since $0$ is the smallest ordinal. 
 
Otherwise, fix some nonzero sequence $\bm \alpha$ and 
suppose that the lemma holds for every 
sequence $\bm \beta $ with 
$v(\bm \beta ) < v(\bm \alpha)$.
As we have remarked in the proof of Lemma \ref{wf},
if $\bm \beta \prec \bm \alpha$, as witnessed by 
$\bar \imath$, then either $ \alpha _{\bar \imath}$
is z-special, or   $ \zeta_{\bm \beta } < \zeta_{\bm \alpha } $.
This implies that  
$\sumt _{i < \omega} \beta _{i} < \sumt _{i < \omega} \alpha_{i}$,
since, by assumption,  $\sumt$ is 
 strictly monotone on z-special elements,
hence also   strictly monotone on $\zeta$, by Remark \ref{e}.

Thus, by the inductive hypothesis, 
if $\bm \beta \prec \bm \alpha$, then
$ v(\bm \beta ) \leq
\sumt _{i < \omega} \beta _{i} < \sumt _{i < \omega} \alpha_{i}$.
By the definition of rank, $v(\bm \alpha )$ is the smallest ordinal which is
strictly larger than every $v(\bm \beta )$ such that   $\bm \beta \prec \bm \alpha$.
Thus $ v(\bm \alpha ) \leq \sumt _{i < \omega} \alpha_{i}$.
\end{proof}

\begin{theorem} \labbel{thmgamr}
For every sequence $ \bm \alpha = ( \alpha_i) _{i < \omega} $ of ordinals,
$v(\bm \alpha) = \sums _{i < \omega} \alpha_{i}$.
 \end{theorem}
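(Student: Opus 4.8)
The plan is to prove the two inequalities $v(\bm\alpha)\le\sums _{i < \omega} \alpha_{i}$ and $\sums _{i < \omega} \alpha_{i}\le v(\bm\alpha)$ separately. The first is immediate: since $\sums$ satisfies \eqref{s3} by the part of Theorem \ref{simpth} already established, Lemma \ref{ranklem} applied with $\sumt=\sums$ gives $v(\bm\alpha)\le\sums _{i < \omega} \alpha_{i}$ at once. So the whole content lies in the reverse inequality, which I would obtain by well-founded induction on the $\prec$-rank $v(\bm\alpha)$ (equivalently, by taking a $\prec$-minimal counterexample, recalling that $\bm\beta\prec\bm\alpha$ forces $v(\bm\beta)<v(\bm\alpha)$).

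The inductive step reduces to the following cofinality claim: \emph{for every $\eta<\sums _{i < \omega} \alpha_{i}$ there is $\bm\beta\prec\bm\alpha$ with $\eta\le\sums _{i < \omega} \beta_{i}$.} Granting this, fix a $\bm\alpha$ with $v(\bm\alpha)$ minimal among the possible failures and set $\eta=v(\bm\alpha)$. If $v(\bm\alpha)<\sums _{i < \omega} \alpha_{i}$, the claim produces $\bm\beta\prec\bm\alpha$ with $v(\bm\alpha)\le\sums _{i < \omega} \beta_{i}$, while minimality together with $v(\bm\beta)<v(\bm\alpha)$ gives $\sums _{i < \omega} \beta_{i}=v(\bm\beta)<v(\bm\alpha)$, a contradiction. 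Hence $\sums _{i < \omega} \alpha_{i}\le v(\bm\alpha)$, and with the first inequality this yields equality.

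To prove the cofinality claim I would feed the order-theoretic characterization back into the argument. By Theorem \ref{oc}, realize $\sigma=\sums _{i < \omega} \alpha_{i}$ as an ormf mixed sum through pairwise disjoint sets $(A_i)_{i<\omega}$ with $\bigcup_i A_i=\sigma$ and order isomorphisms $h_i\colon A_i\to\alpha_i$. For $\eta<\sigma$, set $B_i=A_i\cap[0,\eta)$ and let $\beta_i$ be the order-type of $B_i$. Then $(B_i)_{i<\omega}$ realizes $\eta$ as a mixed sum of $\bm\beta=(\beta_i)_{i<\omega}$, and it is again ormf, because the finiteness condition \eqref{or} can only become weaker after restricting to elements below $\eta$; by the maximality in Theorem \ref{oc} this gives $\eta\le\sums _{i < \omega} \beta_{i}$. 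To see $\bm\beta\prec\bm\alpha$, let $\bar\imath$ be the unique index with $\eta\in A_{\bar\imath}$, so $\beta_{\bar\imath}=h_{\bar\imath}(\eta)<\alpha_{\bar\imath}$. Applying the ormf condition \eqref{or} at $(\bar\imath,\eta)$ shows that the set of $j$ with $\beta_j>\beta_{\bar\imath}$ is finite, since any such $j$ supplies an element $\theta\in B_j$ of rank $\beta_{\bar\imath}$, which witnesses membership in the set in \eqref{or}. Taking $\beta=\beta_{\bar\imath}$ and $F=\{\,j\mid\beta_j>\beta_{\bar\imath}\,\}$, the three conditions in \eqref{defgg} hold, so $\bm\beta\prec\bm\alpha$.

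I expect the verification that $\bm\beta\prec\bm\alpha$ — that is, reading the ormf condition \eqref{or} as exactly the requirement of Definition \ref{rankd} that all but finitely many $\beta_j$ be $\le\beta_{\bar\imath}$ — to be the delicate point, together with checking that restricting an ormf realization stays ormf; the rest is bookkeeping. Finally, I would note that this argument simultaneously discharges the postponed minimality half of Theorem \ref{simpth}: combining $v=\sums$ with Lemma \ref{ranklem} gives $\sums _{i < \omega} \alpha_{i}=v(\bm\alpha)\le\sumt _{i < \omega} \alpha_{i}$ for every operation $\sumt$ satisfying \eqref{s3}, so $\sums$ is indeed the smallest operation which is strictly monotone on z-special elements.
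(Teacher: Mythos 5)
Your proposal is correct and rests on the same two pillars as the paper's proof: Lemma \ref{ranklem} for $v(\bm\alpha)\le\sums_{i<\omega}\alpha_i$, and Theorem \ref{oc} together with truncation of an ormf realization below an arbitrary $\eta<\sums_{i<\omega}\alpha_i$ for the converse. The differences are organizational, and both of your choices slightly streamline the argument. You induct on $v(\bm\alpha)$ (minimal counterexample) instead of on $\sums_{i<\omega}\alpha_i$; since $\bm\beta\prec\bm\alpha$ automatically forces $v(\bm\beta)<v(\bm\alpha)$, your inductive hypothesis applies for free, whereas the paper, inducting on $\sums$, must first establish $\sums_{i<\omega}\beta_i<\sums_{i<\omega}\alpha_i$ by invoking strict monotonicity on z-special elements. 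Moreover, your $\prec$-witness is the truncated order-type sequence itself: you read the ormf condition \eqref{or} at $(\bar\imath,\eta)$ as exactly the finiteness of $F=\{\,j\mid\beta_j>\beta_{\bar\imath}\,\}$ required in \eqref{defgg}, which is sound because $B_j=A_j\cap[0,\eta)$ is an initial segment of $A_j$, so $\beta_j>\beta_{\bar\imath}$ supplies $\theta\in B_j$ with $h_j(\theta)=h_{\bar\imath}(\eta)$, placing $j$ in the set \eqref{or}; likewise, restricting an ormf realization below $\eta$ stays ormf, as the paper also notes. The paper instead defines an auxiliary sequence ($\beta_{\bar\imath}=h_{\bar\imath}(\eta)$, $\beta_i=\alpha_i$ on $F_\eta$, and $\beta_i=\min\{\beta,\alpha_i\}$ elsewhere), which is $\prec\bm\alpha$ by construction, and then compares it with the truncated sequence via weak monotonicity of $\sums$; your route trades that monotonicity appeal for the direct ormf verification. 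Finally, your closing remark---that the identity $v=\sums$ combined with Lemma \ref{ranklem} discharges the postponed minimality half of Theorem \ref{simpth}---is precisely how the paper completes that proof.
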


 \begin{proof} 
The inequality $v(\bm \alpha) \leq \sums _{i < \omega} \alpha_{i}$
follows from Lemma \ref{ranklem}, since $\sums$
is strictly monotone on z-special elements, by the proved part of Theorem \ref{simpth}.

In order to prove the converse inequality, we will use Theorem \ref{oc}.
In particular,  
$\sums _{i < \omega} \alpha_{i}$ has an ormf realization
by certain sets  $( A_i) _{i < \omega} $.
Recall that this means that  $\sums _{i < \omega} \alpha_{i}$
is the disjoint union of the $ A_i $,
each $A_i$ has order-type $\alpha_i$, with the 
order-preserving bijection denoted
by $h_i$,  and furthermore  the realization is ormf, namely,
 for every $ \eta  < \sums _{i < \omega} \alpha_{i}$,
$\eta \in A_i$,  
there is a finite set $F_ \eta  \subseteq \omega $ such that if 
$ \theta  < \eta  $,
$ \theta   \in A_j$, 
 and $h_j( \theta  ) \geq h_i ( \eta  )$,
then $j \in F_ \eta $
(it is no loss of generality to assume that $i\notin F_ \eta$).
Note that, given $\eta$ as above, 
$D_i =\eta \cap A_i$ ($i < \omega$) is an ormf realization of $\eta$ 
itself, so that, by Theorem \ref{oc}, if 
we let $ \delta _i$  be the order-type of  
$\eta \cap A_i$, for $i < \omega$,
then $\eta \leq \sums _{i < \omega} \delta  _{i}$.
Compare Lemma \ref{orf}.

The proof that $\sums _{i < \omega} \alpha_{i} \leq v(\bm \alpha)$,
for every sequence $\bm \alpha$, 
is by induction on
$\sums _{i < \omega} \alpha_{i}$. As above, the base case  
$\sums _{i < \omega} \alpha_{i} =0$
is straightforward.

Otherwise, fix some nonzero sequence $\bm \alpha $
and assume that $ \sums _{i < \omega} \beta _{i} \leq v(\bm \beta )$,
for every sequence $\bm \beta $ such that 
$ \sums _{i < \omega} \beta _{i} < \sums _{i < \omega} \alpha_{i}$.  
We need to show that for every $ \eta  < \sums _{i < \omega} \alpha_{i}$,
there is a sequence $ \bm \beta \prec \bm \alpha $ 
such that   $ \eta  \leq v(\bm \beta ) $.
Given $ \eta $ and an ormf realization of 
$\sums _{i < \omega} \alpha_{i}$ as above,
let $\bar{\imath}$    be the unique index 
such that  $\eta  \in A_ { \bar{\imath} }$.
Let $\bm \beta$ be defined by setting
$ \beta  _{\bar \imath} =  \beta = h_{\bar{\imath}} ( \eta  )$,
$\beta_i =  \alpha _i$, if  $i \in F_\eta$, and 
$\beta_i =  \min\{ \beta, \alpha _i \} $, otherwise.
In particular $\bm \beta \prec \bm \alpha$. 
Moreover, by the argument in the proof of
Lemma \ref{ranklem}, we have  
$\sums _{i < \omega} \beta _{i} < \sums _{i < \omega} \alpha_{i}$,
since
$\sums$ is strictly
monotone on z-special elements.

Recall that, for $j < \omega$,  $ \delta _j$  is the order-type of  
$\eta \cap A_j$.
For every $j < \omega$, we have $\delta_j \leq \beta _j$,
since if  $j \in F_ \eta $, then $  \delta _j \leq  \alpha_j =  \beta _j $.
Otherwise,  if
$ \theta   \in \eta \cap  A_j$, 
 then $h_j( \theta  ) < h_{\bar{\imath}} ( \eta  )$,
thus $\delta_j $, being the order-type of 
$\eta \cap A_j$, is $\leq h_{\bar{\imath}}( \eta) = \beta $;
but also $\delta_j  \leq \alpha _j$, thus $\delta_j \leq \beta _j$.
By the already proved weak monotonicity of $\sums$,
$\sums _{i < \omega} \delta _{i} \leq \sums _{i < \omega} \beta _{i}$,
hence 
$\eta \leq \sums _{i < \omega} \delta  _{i} \leq 
\sums _{i < \omega} \beta _{i} = v(\bm \beta )$,
by the inductive assumption, since 
$\sums _{i < \omega} \beta _{i} < \sums _{i < \omega} \alpha_{i}$,
as already remarked.  
\end{proof}

\begin{proof}[Proof of Theorem \ref{simpth} (continued)]
So far,  we have proved that 
 $\sums$ is  
strictly monotone on z-special elements. 
If $\sumt$ is another operation satisfying this
property, then
$\sums _{i < \omega} \alpha_{i} =
v(\bm \alpha) \leq  \sumt _{i < \omega} \alpha_{i}$,
for every sequence $\bm \alpha $, by Theorem \ref{thmgamr}
and Lemma \ref{ranklem}.
Hence $\sums$ is the smallest operation with the
property.    
\end{proof}

The ordinal $v(\bm \alpha)$ can be interpreted as the value of some
combinatorial game 
\cite{Sieg} 
played on sequences of ordinals, where
the Left player moves on $\bm \alpha $
 choosing some sequence $\bm \beta \prec \bm \alpha$.  
This leads to the problem of extending  $\sums$ 
 to sequences of Conway surreal numbers or, possibly, even 
sequences of combinatorial games.
See Appendix I below.

\section{Further Remarks} \labbel{fur}

We have seen that $\sums$ is 
strictly monotone on
z-special elements. 
However, as we have showed in
Theorem \ref{thm}.6.a, it is possible
to have strict monotonicity on
e-special elements, and this is 
a slightly stronger  condition.
In the next example we show that
$\sums$
does not satisfy this condition.

 \begin{remark} \labbel{snote}   
(a) The operation $\sums$
is not  
strictly monotone on
e-special elements (recall Condition \eqref{s2}).

Indeed,
$ \sums (0,1,2,3, \dots ) =  \sums ( \omega ,1,2,3, \dots )
= \omega ^2 $.
For both sequences 
$\varepsilon = \omega $,
but $0= \alpha_0 < \varepsilon  = \beta _0 $,
where the elements of the first sequence are $\alpha_i$
and the elements of the second sequence are $ \beta _i$.
The point is that 
$\beta_0 = \omega $
in the second sequence is 
e-special but not
z-special.     
 
Similarly
$\sums$
is not  
strictly monotone on $\varepsilon$  (compare
Remark \ref{e}).
Indeed,
$ \sums (0,1,2,3, \dots ) =  \sums ( \omega , \omega , \omega  \dots )
= \omega ^2 $, but 
$\varepsilon _ {\bm \alpha }= \omega < \omega +1 =\varepsilon _ {\bm \beta  }$
(though 
$ \zeta  _ {\bm \alpha }= \zeta  _ {\bm \beta  } =  \omega $). 

(b) The above counterexamples apply also to $\suma$, as introduced
shortly before Lemma \ref{lemmo}. In particular,  Proposition \ref{ops2}
cannot be improved.  

(c) As explained in Section \ref{fobs}, under some reasonable requests,
we cannot expect
strict monotonicity for an infinitary sum operation;
on the other hand, again by the arguments
in Section \ref{fobs},  $\sums$ appears to be quite natural,
since it retains many instances of strict monotonicity.
Nevertheless, one could argue that the other operation
$\sumn$, whose existence has been proved in Theorem \ref{thm}.6.a,
might be the most natural operation, under the aspects taken into 
consideration here. We have not devised an explicit
construction of  $\sumn$ yet, but the arguments  
in (a)  imply that  
$ \sumn ( \omega ,1,2,3, \dots )
> \omega ^2 $. Actually, it is not difficult to check that  
$\sumn$ and  $\sums$ agree on any sequence $\bm \alpha$ 
with finite $\varepsilon _ {\bm \alpha }$.
This implies that  
$ \sumn ( \omega ,1,2,3, \dots )
= \omega ^2 +1 $,
$ \sumn ( \omega , \omega ,2,3, \dots )
= \omega ^2 +2 $, \dots,
$ \sumn ( \omega , \omega , \omega , \dots )
= \omega ^2 + \omega  $, outcomes which at
first sight do not appear very natural, so that we are still
convinced that, in the above respects, $\sums$ 
is the most convenient operation.

While the explicit definition of $\sums$ 
is somewhat involved, the fact that $\sums$
has   order-theoretical  and
 game-theoretical characterizations suggests
the naturalness of the operation. 
 \end{remark}

In any case, a thorough study of $\sumn$ is missing.

\begin{problem} \labbel{probn}
Give an explicit definition of $\sumn$
and study this operation. 
 \end{problem}

\begin{remark} \labbel{compar}
(a) Comparing $\sums$  with the operation $\nsum$  studied in \cite{w}
(the countable case of $\sumh $ as presented in the introduction here),
we may say that $\sums$ retains most instances  of 
strict monotonicity  at the expense of instances of associativity,
 while, on the other hand, $\nsum$
 retains as much of commutativity-associativity as possible
at the expense of instances of strict monotonicity.

For example, 
we can weakly associate 
$\nsum _{i < \omega} 1 = 1 +1 +1 +1 \dots  = (1 +1) +(1 +1) +  \dots 
= 2+ 2+ \dots = \nsum _{i < \omega} 2$.
See \cite[Proposition 2.4(6)]{w} for a formal statement. 
However, the above identities strongly break strict 
monotonicity.
On the other hand,
$\sums _{i < \omega} 1  =
 \omega \ne \omega 2 =
\sums _{i < \omega} 2 $. 

(b) As a related remark, 
$\sums$ satisfies many instances of distributivity.
Suppose that  $\delta$ is any ordinal, 
$\bm \alpha $ is a sequence such that  there are infinitely many   
$i < \omega$ such that $ \alpha _i = \zeta _{ \bm \alpha } $
(thus \eqref{seconda} applies) and 
$ \alpha _{i_\ell} =\zeta_ {\bm \alpha} \+ \alpha _{i_ \ell}^\heartsuit $
(natural sum), for every e-special element $\alpha _{i_ \ell}$. 
Then 
$\sums _{i < \omega} ( \delta \x \alpha  _i) =
\delta \x \sums _{i < \omega} \alpha  _i$. 

However, distributivity with respect to the H-natural product
 does not always hold:
for example, evaluating using $\sums$, 
$1+2+3+4+ \dots = \omega ^2$, but also 
$2+4+6+8+ \dots \dots = \omega ^2$.
Similar counterexamples
necessarily present themselves under minimal assumptions,
as we will show in  Remark \ref{perforza}.
\end{remark}   

\begin{remark} \labbel{annon} 
As another property which fails for
 $\sums$,
it is not always the case that 
\begin{equation}\labbel{sump}    
  \sums (\alpha_i \+ \beta_i) = 
\big(\sums \alpha_i \big)  \+ \big(\sums \beta_i\big ) 
   \end{equation}
Again, taking
$ \alpha_i=\beta_i=i$, we have 
  $\sums (\alpha_i \+ \beta_i) = \omega ^2  \neq \omega ^2 2
= \left( \sums \alpha_i  \right ) \+ \left( \sums \beta_i \right )$.
As another counterexample, 
if $  \alpha _0 =  \omega^3 $,  
$ \beta _0=1$,  $ \alpha_i= \beta_i = \omega$,
for $i > 0$, then
$\left( \sums \alpha_i  \right ) \+ \left( \sums \beta_i \right ) =
(\omega^3 + \omega^2) \+ \omega^2 \neq
\omega^3 + \omega^2 2 +1 =
\sums (\alpha_i \+ \beta_i)$.

Probably it is worthwhile to investigate
those special cases under 
 which
\eqref{sump} or distributivity 
with respect to products actually hold. 
\end{remark}

\begin{remark} \labbel{perforza}
Suppose that $\bigoplus$
is an everywhere defined
 weakly monotone  
 operation on countable sequences of ordinals
with the property that enlarging a 
sequence by inserting any number, possibly infinite, of $0$'s
does not change the value of  
$\bigoplus$
(if this happens, we might say that $0$ is
\emph{fully neutral}; note that this is a stronger notion
with respect  to being neutral, as defined in Remark \ref{no}(c)).   

If 
$( \alpha_i) _{i < \omega} $ 
and $( \beta _i) _{i < \omega} $ 
are mutually cofinal sequences,
then necessarily
$\bigoplus_{i < \omega} \alpha_{i}= 
\bigoplus _{i < \omega} \beta _{i}$.
Indeed, if necessary, add a sufficient number of $0$'s 
at the beginning of the second sequence
in such a way that the first non-null
element of $( \beta _i) _{i < \omega} $, now at place, say,
$i_0$,   
is bounded by $\alpha_{i_0}$. Afterwards, if necessary, 
 add further $0$'s in such a way  
that the second non-null
element of $( \beta _i) _{i < \omega} $, now moved at place
$i_1$,   
is bounded by $\alpha_{i_1}$
and go on the same way.
By monotonicity and invariance under adding $0$'s,
we get  
$\bigoplus_{i < \omega} \alpha_{i}\geq 
\bigoplus _{i < \omega} \beta _{i}$
and the symmetric argument provides the converse inequality.
 
Note that the present argument has little to do with 
ordinals, it works for every partially ordered set 
with some specified element  $0$ 
and for sequences whose members are $\geq 0$.

Thus, under the assumption that 
$\bigoplus$ is monotone and invariant under inserting $0$'s,
we get the quite counterintuitive fact that $\bigoplus$
is necessarily evaluated the same way
at the sequences, say, $(1,2,2,3,3,3,4,4,4,4, \dots)$
and $(2^2,3 ^{3^3}, 4 ^{4^{4^4}}, \dots )$.  
 \end{remark}

\begin{problem} \labbel{transf}
Find a generalization of the definitions and of the results
presented here to sequences indexed by
ordinals $ > \omega $ or, possibly, 
indexed by unordered sets. Compare \cite{t}.
 \end{problem}

\begin{acknowledgement} 
We thank anonymous referees of the present paper and of 
 \cite{w,t,infnp}
for many interesting suggestions.
We thank Harry Altman for stimulating discussions.
 \end{acknowledgement}

\section{Appendix I. A game-theoretical rewording} \labbel{game} 

We now provide a characterization of $\sums$ in terms of
combinatorial game theory. We refer to 
\cite[Chapter VIII]{Sieg} for   basic notions, though here we will need
really little prerequisites and we will try to make the paper as 
self-contained as possible. The reader familiar with
combinatorial game theory might skip the next three subsections.
To make a long story short,
what we will use is that  
a move on the ordinal $\alpha$ simply amounts to choosing some
ordinal $\gamma < \alpha $. If $\alpha=0$, no move is possible and the 
player looses the play at that point. 

\subsection{Brief hints to various notions of games} \labbel{comparg} 
Classical combinatorial games such as Go, Draughts,
Chess, Tic-Tac-Toe have been played since ancient times,
but the general theory is quite recent. We refer to 
\cite{Sieg} for some history.
In short, a \emph{combinatorial game} is a possibly infinite,
win-lose, two-player alternating partisan
game with perfect information, no chance element, no draw
and no infinite run. See below for more details.
A special characteristic of combinatorial games,
which make them different from other kinds of games studied
in the classical setting,
 is that in combinatorial games
there are no special payoffs, apart from winning and losing.
However, this is no essential loss of generality,
since J. Conway has proved that each game can be assigned a 
\emph{value}, which can be, say, a real number, or even an ordinal.
In general, values of (possibly infinite)
 combinatorial games include all the \emph{surreal numbers}, 
 a Field which extends the real numbers and
contains the class of all the ordinals.
There are also values which are ``not  numbers'',
but here we will deal with these  only marginally.
 
Combinatorial games are different also from those games
profitably studied in set theory,
e.~g. \cite{KM} in the reference list below.
One difference is that in combinatorial games the losing player
is the player who has no available move (this is the \emph{normal
play};   in the \emph{mis\'ere} play the losing player is the ome who
makes the last move; we will always deal with the normal play).
On the other hand, generally, in  
set theoretical games there is a set
of winning positions, and say, the First player wins if the succession
of moves belongs to such a winning set.
Moreover, set theoretical games generally include the possibility
of making an infinite sequence of moves, while, in the classical
situation, any combinatorial game necessarily ends after a finite
(but not necessarily bounded in advance) sequence
of moves (anyway, there is a somewhat deep combinatorial theory
of \emph{loopy games}, but we will not need it here.
 We refer again to  \cite{Sieg}).

Note that, strictly formally, some games used in Model
Theory (for example, classical finite Ehrenfeucht-Fra\"\i ss\'e
games) are for all intents combinatorial games; however, they are 
generally  treated in
a different spirit. By the way, some authors have 
 attempted to apply some notions
from combinatorial game theory to Ehrenfeucht-Fra\"\i ss\'e 
games, see e.~g., \cite{MV}.
It should be also mentioned that some ideas used in 
combinatorial game theory have independently evolved in a different
setting \cite{Bl}  with subsequent semantical applications
 \cite{Hyl}.

\subsection{A really concise introduction to combinatorial games} \labbel{brieintr} 
As we mentioned, a \emph{combinatorial game} is a possibly infinite,
win-lose, two-player alternating partisan
game with perfect information, no chance element, no draw
and no infinite run. \emph{Partisan} means that 
the sets of  moves are possibly distinct for the two players.

For our purposes, combinatorial games are better introduced as
some kind of  ``bilateral'' sets\footnote{For an explicit axiomatization,
see \cite{CK}.
See \cite{But} for still another implementation.}.
Namely, one constructs games in a way similar
to set construction in the von Neumann hierarchy,
  but now we have two membership relations,
``left''  membership and  ``right''  membership.
Combinatorial games are considered as such ``bilateral'' sets.
In a way similar to the standard set-theoretical notation, games can be denoted
by the notation $\{ \, \dotsc  \mid \dots  \,\}$, where the left members
of the game are listed on the left, and the right members of the game 
are listed on the right.
The two players are conventionally called \emph{Left}  and \emph{Right};
the left elements of some game are considered as the possible moves for the 
Left player, and 
the right elements of the game are considered as the possible moves for the 
Right player.

``At the beginning'' (called ``Day 0'' by J. Conway,
and corresponding to rank $0$ in the von Neumann
hierarchy) there is only the empty game $0= \{ \,  \mid \, \} $,
in which no player has any move. Then at ``Day 1''
we have $1= \{ \, 0 \mid  \,\} $, 
$-1 = \{ \,  \mid 0 \, \} $    and $* = \{ \, 0 \mid 0 \, \} $.
All the combinatorial games are constructed by a transfinite
iteration as above in a way similar to
the von Neumann hierarchy. 

As usual in combinatorial
game theory, games and positions are identified. 
Combinatorial games are two-player
alternating games, but, as we are going to explain soon,
 the development of the theory requires 
to consider the possibility
that some player makes an arbitrarily long finite sequence
of consecutive moves on the same run (a \emph{run} 
is a sequence of moves, a \emph{play} is an alternating run). In technical terms,
one needs to  consider runs 
 which are not necessarily alternating. 
In any case, in the basic theory, infinite runs are not allowed,
though game themselves might be infinite.
For example, $ \omega = \{ \, 0, 1, 2, \dots \mid  \,\}$
is infinite, but no infinite run is possible 

There is a natural operation of \emph{(Conway) sum}
on games: in the sum $G+H$ the player who has to  move
should make a move in exactly one of  $G$ and $H$ of her choice,
say, she makes the move $H^L$ on $H$. Then the play continues
on (the transfinitely recursively defined) sum $G+H^L$. 
So, for example, in some run Left might always go on---but
she is not forced to---moving
on the second component, while Right always moves on
the first component; this is the
reason why in general games---in the example at hand, $G$ 
and $H$---we must allow the possibility of the same player
making consecutive moves.
A \emph{(Conway) zero} game is a game in which the 
player moving Second, be her Left or Right, 
has a winning strategy under the alternating rule.
It is easily seen that if $H$ is a zero game, then, for every
game $G$, the games $G$ and $G+H$ have the same outcome.
This means that if Left is to move first, she has a winning strategy
on $G+H$ if and only if she has a winning strategy on $G$;
and symmetrically if Right is to move first.    
The \emph{opposite} $-G$ of some game $G$ 
is the game in which Left and Right options are recursively
exchanged. Two games $G$ and $H$ are \emph{Conway
equivalent} if $G-H$ is Conway zero.    

 Games  are frequently considered up to
Conway equivalence  and the equivalence classes are frequently called
\emph{values}. The value of some game $G$ will be denoted
by $v(G)$. Note that games
are frequently  identified directly with values, but 
we will not use such a convention 
 here.

\subsection{Connections with sets, ordinals and partially ordered sets} \labbel{conne} 
One can interpret sets  as games on which only  Left
 can move in each subposition. In other words, one can consider a
set as a game for which the ``right membership'' relation is always 
(=hereditarily) false.  
 It turns out that each set $x$ 
is Conway equivalent to exactly one ordinal, which in fact 
is  the von Neumann
rank of $x$ (we will not need this here).
 In the case of (games Conway equivalent to) ordinals,
equivalence means that the supremum of the possible moves 
(up to equivalence) is the same,
so that, say,  $n= \{ \, 0, 1, \dots, n-1 \mid  \,\}$ 
 is equivalent to $\{  n-1 \mid  \,\}$, and 
$ \omega = \{ \, 0, 1, \dots, n, \dots \mid  \,\}$ 
 is equivalent to 
$\{ 3 ^{3^3}, 4 ^{4^{4^4}}, \dots, n^{n^{n^{\dots}}}
, \dots \mid  \,\}$.
 Recall that in the above
``bilateral'' set-theoretical-like notation, 
 the left side lists all the possible moves of the
Left  player. As we mentioned, the Right player has no available move 
in the games under consideration, hence the right-hand side
will be always empty. 

At first sight, it might appear strange that
only one player has possible moves in a two-player game.
However, the setting has some use, since we are then allowed to  apply
Conway theory\footnote{By the way, we know no explicit
description of the operation of Conway sum
among sets (we mean, disregarding Conway equivalence.
 Up to Conway equivalence,
the situation is completely clear: the sum is the Hessenberg natural sum 
of the von Neumann ranks). Some marginal comments appear
in \cite{sergam} and in \emph{A Ring structure on the Class of
 Combinatorial Games}, in preparation. A discussion about
some other operations on sets appear in \cite{Kir}.}. 
In any case, let us point out
that this setting is not essentially different from the computation
of the rank of a well-founded partially ordered set $P$.
If we associate to $P$ a game such that a Left move
amounts to choosing some element $p$ of $P$ and the 
outcome of the move is the ordered set $ \downarrow _< p$ 
of all the elements strictly smaller than $p$, then the Conway value
of this game associated to $P$ is exactly the rank of $P$.  
On the other hand, for a reader already familiar with combinatorial game theory,
we feel that the game theoretical approach looks slightly more natural and
somewhat simpler than the order theoretical approach which, in the case at hand,
we have presented in Section \ref{rank}. 

Indeed, the above point of view is confirmed by works by 
A. Blass and Y. Gurevich \cite{BG}, who implicitly
introduced the above game on $P$ (they called 
\emph{height} what we call rank here)
and then compared $P$ with another well-founded 
poset $Q$  by considering, in different terminology, the Conway
difference of the two associated games. See \cite[Section 3, in particular,
Proposition 23]{BG}. The use of games simplified many 
arguments in \cite{BG}.

\subsection{A game-theoretical characterization of $\sums$} \labbel{subsecgame} 

\begin{definition} \labbel{gamesum}
We associate to every sequence 
$\bm \alpha = ( \alpha_i) _{i < \omega} $ 
of ordinals the following game $G(\bm \alpha)$.

A Left move on $G(\bm \alpha)$ consists of choosing
some $\bar \imath < \omega$, some finite, possibly empty, $F \subseteq  \omega$ 
with $\bar \imath \notin F$  
 and some $\beta < \alpha _{\bar \imath}$ (so that no move
is possible when $ \alpha _i=0$, for every $i < \omega$). The Right
player has never  available moves, since $\bm \alpha$ is a  sequence of ordinals
(in what follows we will briefly discuss the possibility
of extending the ruleset to sequences of surreal numbers). 
The outcome of the above Left move is the game 
$G(\bm \beta )$ associated to the sequence 
 $ \bm \beta = ( \beta _i) _{i < \omega}$ defined by
\begin{equation} \labbel{defg}
\begin{aligned} 
\beta_i &= \alpha _i, && \text{if  $i \in F$, or $\alpha_i \leq \beta $,} 
\\
\beta_i & = \beta ,   && \text{otherwise}.
\end{aligned}
\end{equation}      

A simple way to justify the above definition is to observe
that the following partial order $\precsim $ between sequences of ordinals
is  well-founded:
 $ \bm \gamma  \precsim \bm \alpha $ if 
$G(\bm \gamma ) $ can be reached from $G(\bm \alpha)$ after a finite
number of moves, as given by the definition.
Indeed, if Left chooses $\beta$ in a move on 
$G(\bm \alpha)$, getting $G(\bm \beta )$,
 then $ \zeta_{\bm \beta} \leq \beta $, since $F$ is finite.
This implies that if the chosen $\bar{\imath}$ 
is such that $\alpha_ { \bar{\imath} }$ is
$\leq \zeta_{\bm \alpha }$, then   $ \zeta_{\bm \beta } < \zeta_{\bm \alpha } $.
Since there are only a finite number of z-special elements
(that is, summands $> \zeta_{\bm \alpha }$), and 
there is no infinite decreasing sequence on an ordinal,
sooner or later, after a finite number of moves, the player is forced to 
choose some $\bar{\imath}$ 
with $\alpha_{ \bar{\imath} } \leq \zeta_{\bm \alpha }$ and, as we have shown,
such a move strictly decreases the value of 
$\zeta$. The value of $\zeta$ is an ordinal itself and, again since
there is no infinite decreasing sequence of ordinals, 
after a finite number of moves, the player will reach 
(the game associated to) a sequence with
$\zeta =0$. This means moving on a sequence whose summands are
all $0$, except for finitely many of them, and any run on such a sequence 
necessarily  ends
after a finite number of moves. 

The above argument shows that
$\precsim $
is  well-founded, hence 
the definition of $G(\bm \alpha)$ can be correctly given by induction
on the rank 
 of  the sequence $\bm \alpha$ in the well-founded partial order  $\precsim $.
The argument also shows that no infinite run is possible, 
so that each $G(\bm \alpha)$ is actually a combinatorial game
in the strict sense. The game  $G(\bm \alpha)$ is equivalent to an 
ordinal, since only the Left player has available moves on each subposition;
actually, the value of  $G(\bm \alpha)$ is the rank of the sequence 
 $\bm \alpha$ in 
the  well-founded order $\precsim $.
Indeed, the value of a game
for which  only  Left  has available moves on each subposition
 is the smallest
ordinal which is strictly larger than the value of all possible moves.
Compare \cite[VII, Theorem 1.14]{Sieg}.
\end{definition}

\begin{lemma} \labbel{gamlem}
Suppose that $\sumt$ is an infinitary operation which 
is strictly monotone on z-special elements, that is, 
$\sumt$ satisfies Clause \eqref{s3}.

Then, for every 
sequence $ \bm \alpha = ( \alpha_i) _{i < \omega} $ of ordinals,
 $v(G(\bm \alpha)) \leq \sumt _{i < \omega} \alpha_{i}$.
\end{lemma}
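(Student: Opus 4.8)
The plan is to mirror the proof of Lemma \ref{ranklem} almost verbatim, the only change being that the abstract order $\prec$ is replaced by the concrete one-step move relation of the game $G(\bm \alpha)$. Recall from Definition \ref{gamesum} that the move relation $\precsim$ is well-founded and that $v(G(\bm \alpha))$ is precisely the rank of $\bm \alpha$ in $\precsim$; moreover, since only Left has available moves, $v(G(\bm \alpha))$ is the least ordinal strictly larger than $v(G(\bm \beta))$ as $\bm \beta$ ranges over the positions reachable from $\bm \alpha$ in a single Left move. I would therefore argue by induction on this rank.

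The crux is the following one-step estimate: if $\bm \beta $ is obtained from $\bm \alpha $ by a legal Left move, with parameters $\bar \imath$, finite $F$ with $\bar \imath \notin F$, and $\beta < \alpha _{\bar \imath}$, then $\sumt _{i < \omega} \beta _{i} < \sumt _{i < \omega} \alpha_{i}$. To see this, note first that $\beta_i \leq \alpha _i$ for every $i$ by \eqref{defg}. Next, again by \eqref{defg}, every index $i$ with $\beta_i > \beta$ must lie in $F$, so only finitely many $\beta_i$ exceed $\beta$; hence $\zeta_{\bm \beta} \leq \beta$. Finally, since $\bar \imath \notin F$ and $\beta < \alpha _{\bar \imath}$, we have $\beta_{\bar \imath} = \beta < \alpha _{\bar \imath}$, and therefore $\zeta_{\bm \beta} \leq \beta < \alpha _{\bar \imath}$. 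Thus $\bm \beta $ and $\bm \alpha $ satisfy the hypotheses of Clause \eqref{s3}, with the roles of the two sequences interchanged ($\bm \beta $ playing the part of ``$\bm \alpha$'' and $\bm \alpha $ that of ``$\bm \beta$'') and the witnessing index being $\bar \imath$: indeed $\zeta_{\bm \beta} < \alpha _{\bar \imath}$ and $\beta_{\bar \imath} < \alpha _{\bar \imath}$. Since $\sumt$ satisfies \eqref{s3}, we conclude $\sumt _{i < \omega} \beta _{i} < \sumt _{i < \omega} \alpha_{i}$.

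With this estimate in hand the induction closes exactly as in Lemma \ref{ranklem}. The base case $v(G(\bm \alpha)) = 0$ (no move available, i.e.\ $\alpha_i = 0$ for all $i$) is trivial since $0$ is the least ordinal. For the inductive step, every one-move successor $\bm \beta $ satisfies $v(G(\bm \beta)) \leq \sumt _{i < \omega} \beta _{i}$ by the inductive hypothesis and $\sumt _{i < \omega} \beta _{i} < \sumt _{i < \omega} \alpha_{i}$ by the one-step estimate, so $v(G(\bm \beta)) < \sumt _{i < \omega} \alpha_{i}$ for every such $\bm \beta $. As $v(G(\bm \alpha))$ is the least ordinal strictly above all of the $v(G(\bm \beta))$, and $\sumt _{i < \omega} \alpha_{i}$ is one such strict upper bound, we obtain $v(G(\bm \alpha)) \leq \sumt _{i < \omega} \alpha_{i}$.

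I expect the only point requiring genuine care to be the verification that each game move lands in the scope of Clause \eqref{s3}, namely that $\zeta_{\bm \beta} < \alpha _{\bar \imath}$; this is exactly where the finiteness of $F$ (equivalently, the finiteness of the set of z-special elements) is used, and it is the very mechanism that made $\precsim$ well-founded in Definition \ref{gamesum}. One could instead pass through Remark \ref{e} from strict monotonicity on z-special elements to strict monotonicity on $\zeta$, but since the chosen index $\bar \imath$ may itself already be z-special it is cleaner to apply \eqref{s3} directly as above. This lemma is thus the game-theoretic twin of Lemma \ref{ranklem}, and once Theorem \ref{thmgamr} identifies $v(\bm \alpha)$ with $\sums _{i < \omega} \alpha_{i}$ one will be able to combine the two to read off $v(G(\bm \alpha)) = v(\bm \alpha) = \sums _{i < \omega} \alpha_{i}$.
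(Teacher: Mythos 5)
Your proof is correct and takes essentially the same route as the paper's: induction on $v(G(\bm \alpha))$, a one-step estimate showing $\sumt _{i < \omega} \beta _{i} < \sumt _{i < \omega} \alpha_{i}$ for every Left move, and then the fact that $v(G(\bm \alpha))$ is the least ordinal strictly above the values of all options. The only difference is cosmetic: the paper establishes the one-step estimate by a case split (either the move touches a z-special element of $\bm \alpha$, or $\zeta$ strictly decreases, invoking Remark \ref{e}), whereas you verify the hypotheses of \eqref{s3} directly and uniformly via $\zeta_{\bm \beta} \leq \beta < \alpha _{\bar{\imath}}$, which is slightly cleaner.
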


 \begin{proof} 
By induction on $v(G(\bm \alpha))$
(the induction can be equivalently carried
over on $\sumt _{i < \omega} \alpha_{i}$). 
 If 
$v(G(\bm \alpha))=0$,
the conclusion is straightforward, since $0$ is the smallest ordinal. 
 
Otherwise, fix some nonzero sequence $\bm \alpha$ and 
suppose that the lemma holds for every 
sequence $\bm \beta $ with 
$v(G(\bm \beta )) < v(G(\bm \alpha))$.
As we have remarked in the comment
justifying Definition \ref{gamesum},
any move on $G(\bm \alpha)$  either involves some
z-special element $\alpha_ { \bar{\imath} }$  of $\bm \alpha$, 
or strictly decreases the value of $\zeta$,
hence any resulting sequence $\bm \beta $ is such that 
$\sumt _{i < \omega} \beta _{i} < \sumt _{i < \omega} \alpha_{i}$,
since, by assumption,  $\sumt$ is 
 strictly monotone on z-special elements,
hence also   strictly monotone on $\zeta$, by Remark \ref{e}.
Since  $v(G(\bm \alpha ))$  is an ordinal and $G(\bm \beta )$
  is a move on $G(\bm \alpha )$, $v(G(\bm \beta ))<v(G(\bm \alpha ))$. 
By the inductive hypothesis, for any such
$\bm \beta $,  $v(G(\bm \beta )) \leq  \sumt _{i < \omega} \beta _{i} $,
 hence  $ v(G(\bm \beta )) < \sumt _{i < \omega} \alpha  _{i}$.
Now $ v(G(\bm \alpha )) \leq  \sumt _{i < \omega} \alpha  _{i}$
follows, since $v(G(\bm \alpha ))$ is the smallest ordinal
strictly larger than $v(G(\bm \beta ))$,  
for all possible moves $G(\bm \beta )$ 
on $G(\bm \alpha )$.
 \end{proof}

\begin{theorem} \labbel{thmgam}
For every sequence $ \bm \alpha = ( \alpha_i) _{i < \omega} $ of ordinals,
$v(G(\bm \alpha)) = \sums _{i < \omega} \alpha_{i}$.
 \end{theorem}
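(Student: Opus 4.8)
The plan is to reduce the statement to the order-theoretic characterization already obtained in Theorem \ref{thmgamr}, namely $\sums_{i<\omega}\alpha_i = v(\bm\alpha)$, where on the right $v(\bm\alpha)$ is the rank of $\bm\alpha$ in the well-founded order $\prec$ of Definition \ref{rankd}. Since, as explained in Definition \ref{gamesum}, the game value $v(G(\bm\alpha))$ is precisely the rank of $\bm\alpha$ in the move order $\precsim$, it is enough to prove that the two rank functions agree, i.e.\ $v(G(\bm\alpha)) = v(\bm\alpha)$ for every $\bm\alpha$. One inequality is immediate: applying Lemma \ref{gamlem} to $\sumt = \sums$, which is strictly monotone on z-special elements by the already proved part of Theorem \ref{simpth}, gives $v(G(\bm\alpha)) \leq \sums_{i<\omega}\alpha_i = v(\bm\alpha)$.

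For the reverse inequality I would first compare the single-move relation of the game with $\prec$. If $\bm\beta$ is the outcome \eqref{defg} of a Left move on $G(\bm\alpha)$ determined by $\bar\imath$, $F$ and $\beta < \alpha_{\bar\imath}$ with $\bar\imath \notin F$, then $\bm\beta \prec \bm\alpha$, since \eqref{defgg} holds for the same data: $\beta_{\bar\imath} = \beta$, each $\beta_i$ is either $\alpha_i$ or $\beta$, and $\beta_i = \min\{\alpha_i,\beta\} \leq \beta$ for $i \notin F$. Conversely, given any $\bm\gamma \prec \bm\alpha$ witnessed by $\bar\imath$, $F$, $\beta$, I may assume $\bar\imath \notin F$ (deleting $\bar\imath$ from $F$ is harmless, because $\gamma_{\bar\imath} \leq \beta$ already); the game-move outcome $\bm\beta$ attached to this same data then dominates $\bm\gamma$ pointwise, as $\beta_i = \alpha_i \geq \gamma_i$ when $i \in F$ or $\alpha_i \leq \beta$, and $\beta_i = \beta \geq \gamma_i$ otherwise. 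Thus every game move is a $\prec$-predecessor, and every $\prec$-predecessor lies pointwise below some game move.

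Using these two facts I would establish $v(\bm\alpha) \leq v(G(\bm\alpha))$ by induction on $v(\bm\alpha)$. Fix $\bm\gamma \prec \bm\alpha$ and choose a game move $\bm\beta$ of $\bm\alpha$ with $\bm\gamma \leq \bm\beta$ pointwise. Weak monotonicity of $\sums$ (Theorem \ref{simpth}), combined with $\sums = v$, gives $v(\bm\gamma) \leq v(\bm\beta)$; since $\bm\beta \prec \bm\alpha$ we have $v(\bm\beta) < v(\bm\alpha)$, so the inductive hypothesis applies to $\bm\beta$ and yields $v(\bm\beta) \leq v(G(\bm\beta))$; finally $v(G(\bm\beta)) < v(G(\bm\alpha))$ because $\bm\beta$ is an immediate option of $G(\bm\alpha)$. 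Chaining these, $v(\bm\gamma) < v(G(\bm\alpha))$ for every $\bm\gamma \prec \bm\alpha$, whence $v(\bm\alpha) = \sup^+\{\, v(\bm\gamma) \mid \bm\gamma \prec \bm\alpha \,\} \leq v(G(\bm\alpha))$. Together with Lemma \ref{gamlem} this gives $v(G(\bm\alpha)) = v(\bm\alpha) = \sums_{i<\omega}\alpha_i$, as required.

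The only steps needing care are the two comparison facts between the game moves and $\prec$, and in particular the pointwise-domination direction, where one must first normalize the witness so that $\bar\imath \notin F$. No new combinatorial machinery is required: the substantive content is imported from Theorem \ref{thmgamr} (the identity $\sums = v$) and from the weak monotonicity of $\sums$, so in particular no ormf realization has to be redone here. Hence the expected main obstacle is purely bookkeeping, namely matching the two move-rulesets rather than any genuinely new argument.
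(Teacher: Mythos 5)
Your proof is correct, but it takes a genuinely different route from the paper's. The paper proves the inequality $\sums _{i < \omega} \alpha_{i} \leq v(G(\bm \alpha))$ by re-running the ormf machinery: it invokes Theorem \ref{oc} to get an ormf realization of $\sums _{i < \omega} \alpha_{i}$ and, by induction on $\sums _{i < \omega} \alpha_{i}$, uses the realization to manufacture, for each $\eta < \sums _{i < \omega} \alpha_{i}$, a Left move $G(\bm \beta )$ with $\eta \leq v(G(\bm \beta ))$ --- in effect repeating, in game language, the proof of Theorem \ref{thmgamr}. You instead treat Theorem \ref{thmgamr} as a black box and reduce the game statement to it by matching the two move rulesets: every game move outcome \eqref{defg} is a $\prec$-predecessor in the sense of \eqref{defgg}, and conversely every $\prec$-predecessor is pointwise dominated by a game move (after the harmless normalization $\bar \imath \notin F$, which is justified exactly as you say, since $\gamma_{\bar \imath} \leq \beta$ is already guaranteed by the first clause of \eqref{defgg}). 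Your transfer induction is sound: the step $v(\bm \gamma ) \leq v(\bm \beta )$ correctly avoids any unjustified appeal to monotonicity of abstract ranks by routing through $v = \sums$ and the weak monotonicity of $\sums$, and the strict drop $v(G(\bm \beta )) < v(G(\bm \alpha ))$ is licensed by the fact, stated in Definition \ref{gamesum}, that the value of a Left-only game is the least ordinal exceeding the values of all options. What your approach buys is economy and conceptual clarity: it makes precise the paper's own remark that the game characterization is ``essentially a reformulation'' of the rank characterization, and it avoids duplicating the ormf argument. What the paper's approach buys is independence: its Appendix I proof relies only on Theorem \ref{oc} and Lemma \ref{gamlem}, so it can be read without Section \ref{rank}, whereas your proof requires Theorem \ref{thmgamr} (and hence that section) as a prerequisite.
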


 \begin{proof} 
The inequality $v(G(\bm \alpha)) \leq \sums _{i < \omega} \alpha_{i}$
follows from Lemma \ref{gamlem}, since $\sums$
is strictly monotone on z-special elements, by the proved part of Theorem \ref{simpth}.

In order to prove the converse inequality, we will use Theorem \ref{oc}.
In particular,  
$\sums _{i < \omega} \alpha_{i}$ has an ormf realization
by certain sets  $( A_i) _{i < \omega} $.
Recall that this means that  $\sums _{i < \omega} \alpha_{i}$
is the disjoint union of the $ A_i $,
each $A_i$ has order-type $\alpha_i$, with the 
order-preserving bijection denoted
by $h_i$,  and furthermore  the realization is ormf, namely,
 for every $ \eta  < \sums _{i < \omega} \alpha_{i}$,
$\eta \in A_i$,  
there is a finite set $F_ \eta  \subseteq \omega $ such that if 
$ \theta  < \eta  $,
$ \theta   \in A_j$, 
 and $h_j( \theta  ) \geq h_i ( \eta  )$,
then $j \in F_ \eta $
(it is no loss of generality to assume that $i\notin F_ \eta$).
Note that, given $\eta$ as above, 
$D_i =\eta \cap A_i$ ($i < \omega$) is an ormf realization of $\eta$ 
itself, so that, by Theorem \ref{oc}, if 
we let $ \delta _i$  be the order-type of  
$\eta \cap A_i$, for $i < \omega$,
then $\eta \leq \sums _{i < \omega} \delta  _{i}$.
Compare Lemma \ref{orf}.

The proof that $\sums _{i < \omega} \alpha_{i} \leq v(G(\bm \alpha))$
is by induction on
$\sums _{i < \omega} \alpha_{i}$. If 
$\sums _{i < \omega} \alpha_{i} =0$,
then
$\sums _{i < \omega} \alpha_{i} \leq v(G(\bm \alpha))$,
since $v(G(\bm \alpha))$ is an ordinal and 
$0$ is the smallest ordinal.  

Otherwise, fix some nonzero sequence $\bm \alpha $
and assume that $ \sums _{i < \omega} \beta _{i} \leq v(G(\bm \beta ))$,
for every sequence $\bm \beta $ such that 
$ \sums _{i < \omega} \beta _{i} < \sums _{i < \omega} \alpha_{i}$.  
We need to show that for every $ \eta  < \sums _{i < \omega} \alpha_{i}$ 
the Left player has a move $G(\bm \beta )$  on $G(\bm \alpha)$ 
such that  $ \eta  \leq v(G(\bm \beta )) $.
Given $ \eta $ and an ormf realization of 
$\sums _{i < \omega} \alpha_{i}$ as above,
let $\bar{\imath}$    be the unique index 
such that  $\eta  \in A_ { \bar{\imath} }$ 
and let the player move according to Definition \ref{gamesum},
choosing $\bar{\imath}$,  
$\beta= h_{\bar{\imath}} ( \eta ) $ and  $F=F_ \eta $,
getting a sequence $( \beta _i) _{i < \omega} $
such that  $\sums _{i < \omega} \beta _{i} < \sums _{i < \omega} \alpha_{i}$,
by the argument in the proof of
Lemma \ref{gamlem}, since
$\sums$ is strictly
monotone on z-special elements.

Recall that, for $j < \omega$,  $ \delta _j$  is the order-type of  
$\eta \cap A_j$.
For every $j < \omega$, we have $\delta_j \leq \beta _j$,
since if  $j \in F_ \eta $, then $  \delta _j \leq  \alpha_j =  \beta _j $;
otherwise,  if
$ \theta   \in \eta \cap  A_j$, 
 then $h_j( \theta  ) < h_{\bar{\imath}} ( \eta  )$,
thus $\delta_j $, being the order-type of 
$\eta \cap A_j$, is $\leq h_{\bar{\imath}}( \eta) = \beta = \beta _j$.
By weak monotonicity of $\sums$ (Theorem \ref{simpth}),
$\sums _{i < \omega} \delta _{i} \leq \sums _{i < \omega} \beta _{i}$,
hence 
$\eta \leq \sums _{i < \omega} \delta  _{i} \leq 
\sums _{i < \omega} \beta _{i} = v(G(\bm \beta ))$,
by the inductive assumption, since necessarily
$\sums _{i < \omega} \beta _{i} < \sums _{i < \omega} \alpha_{i}$,
as already remarked.  
\end{proof}

\begin{remark} \labbel{notall}
It follows from Lemma  \ref{gamlem} that any move on 
$G(\bm \alpha )$ has value strictly less than 
$ \sums _{i < \omega} \alpha_{i}$.

In general, we cannot always get every 
$ \gamma <  \sums _{i < \omega} \alpha_{i}$
as the value of some subposition $G(\bm \beta )$
(but, as we have shown, we have arbitrarily large values 
$<  \sums _{i < \omega} \alpha_{i}$).
For example, if $\alpha_i= \omega $ for every $i < \omega$,
any move on   
$G(\bm \alpha )$ produces infinitely many
summands equal to some $n$ (the chosen $\beta$)
together with finitely many summand equal to $ \omega$ 
(those with the index in $F$), giving the value 
$ \omega n + \omega m = \omega (n+m)$.    
 
If we want to get every 
$ \gamma <  \sums _{i < \omega} \alpha_{i}$
as a subposition, we need to modify the rules in 
Definition \ref{gamesum}, to the effect that Left
is also allowed to move at the same time on further
summands. 
 \end{remark}

\subsection{The problem of generalizing $\sums$ to the surreals} \labbel{probsurrsec} 

\begin{problem} \labbel{probgam}
If possible, find a suitable generalization of Definition \ref{gamesum}
to the case when  $ \bm \alpha  $ is a sequence of surreal
numbers or, possibly, a sequence of combinatorial games.

In case the above problem has an affirmative answer,
study the  infinitary operation which associates to 
some sequence the corresponding game (or its value).
 \end{problem}

\begin{remark} \labbel{limitat}   
Some ideas which might help solving 
the above problem can be borrowed from \cite{sergam}. 
However,
there are serious limitations to possible solutions of the above problem.
In the present remark   we assume that the reader is familiar
with surreal numbers \cite{Go,Sieg}.

(1) There is no (possibly, partial) infinitary operation $\sum^*$ 
on the class of surreal numbers such that 
  \begin{enumerate}[(a)]   
\item
$\sum^*$ is weakly monotone, and
 \item 
for converging (in the standard sense from classical analysis)
 series of real numbers, $\sum^*$ is defined and coincides with the usual series operation,
possibly,  only modulo an infinitesimal, and
\item
if all the summands are equal to some surreal $\alpha$, then $\sum^*$
is defined and 
the outcome of $\sum^*$ is the surreal product $  \omega  \cdot \alpha  $.
  \end{enumerate} 

Indeed, if such an operation exists, then
$1=  \omega  \frac{1}{ \omega } =^{\rm (c)}
 \sum ^{*} \frac{1}{ \omega } \leq ^{\rm (a)}
 \sum ^{*} \frac{1}{2^{n+2}} =  ^{\rm (b)} \frac{1}{2}  \pm \varepsilon  $,
absurd. 

(1$'$) A slightly more involved argument produces a more significant
negative result.
There is no everywhere defined infinitary operation $\sum^\triangle$ 
on the class of surreal numbers such that 
  \begin{enumerate}[(a$'$)]   
\item
$\sum^\triangle$ is weakly monotone, and
 \item 
for every surreal number $s$ and every sequence 
$( r_i) _{i < \omega} $ of real numbers 
such that $\sum _{i < \omega} r_{i}$ 
classically
converges to $r$,
we have
$\sum ^\triangle_{i < \omega} sr_i = sr$.
  \end{enumerate} 

Actually, the proof will show that it is enough to assume (b$'$)   
 when $s$ has the form $ \omega^a$, for some surreal $a$
and  it is enough to assume weak monotonicity
when the first (the smaller) sum consists only of
$0$s and $1$s; we do not need the full assumption
that $\sum^\triangle$ is everywhere defined, either:
it is enough to assume that  $\sum^\triangle_{i < \omega} 1$
is defined (and, of course, also all the sums given by
(b$'$) are defined). 
Recall the definition of $ \omega^a$ from \cite[p. 55]{Go}.   

Indeed, suppose by contradiction that 
$\sum^\triangle$ is such an operation. 
By taking $s=1$ in (b$'$), we get 
$n= 1+1+\dots +1+0+0+ \dots$, where the sum has
exactly $n$ nonzero summands.
Let $t= 1+1+1+ \dots$, computed according to  
$\sum^\triangle$. By weak monotonicity,
$t \geq n$, for every $n< \omega$.
By a classical result, eg. \cite[Theorem 5.6]{Go},
$t$ has a normal form   
$\omega ^{a} r + \omega ^{a_1 } r_1 + \dots$
with $a>a_1> \dots$ surreals and $ r, r_1 \dots$ nonzero reals 
(the normal form might contain transfinitely many
summands, but here it will be enough to consider 
the leading monomial).
The condition $t \geq n$, for every $n< \omega$,
means that $a>0$ and $r>0$, since the surreal order
is given by the lexicographic order of the coefficients, 
considering  larger leading exponents first.
Of course, $n$ has normal
form $ \omega^0 n$.
This means that   also
$\omega ^{a} r^* > n$, for every $n< \omega$ 
and $r^* >0$.
Then take a sequence $( r_i) _{i < \omega} $   
of strictly positive real numbers whose sum is
$r/2$.
We get 
$t= \sum^\triangle_{i < \omega} 1 \leq ^{\rm (a')}
 \sum^\triangle _{i < \omega} \omega ^{a}r_i = ^{\rm (b')}
\omega ^{a} \frac{r}{2}  $, but this is a contradiction,
since the coefficient of the leading monomial
in the expression of $t$ is $r>0$,
thus $t > \omega ^{a} \frac{r}{2}$.

(2) 
Since surreal numbers are linearly ordered, we can apply
Definition \ref{gamesum} to sequences of surreal numbers;
however, in this way we do not get a combinatorial game, since then infinite
runs are possible. 

For example, if $\alpha_n = -n - \frac{1}{2} $, for every $n < \omega$, 
Left can move on  $\alpha_0 =-\frac{1}{2}$, turning it to $-1$.
If we apply  \eqref{defg} literally,  then, no matter the choice of $F$,
all the subsequent $\alpha_i$ remain unchanged, so that  
Left can continue moving on  $\alpha_1 =-1-\frac{1}{2}$, turning it to $-2$,
and so on\footnote{A definition along the above lines might be 
justified some way, though not all the consequences are 
completely clear, so far. See \cite{Li}.}.

This shows that new ideas are necessary, when trying to 
solve Problem \ref{probgam}.

(3) However, we can modify Definition \ref{gamesum} in such a way 
that we get a combinatorial game without infinite runs
even when we deal with surreal numbers.

Recall that a \emph{surreal number} $s$, when defined by means
of a \emph{sign sequence},  is a function from some
ordinal $ \ell (s)  $ to the set $\{ -, +\}$
(thus $ \ell (s)  $ is the first ordinal $\gamma$ such that 
$s(\gamma)$ is not defined). 
Left moves on $s$ by removing some $+$ sign,
together with all the signs that follow, and Right
moves in a similar way by removing a $-$ sign.
More formally, if $\gamma \leq  \ell (s)$,
we define the \emph{$\gamma$-initial segment of $s$}, in symbols, 
$ s { \restriction  \gamma }  $  as the surreal 
$t$ such that $\ell(t) = \gamma $
and $t(\delta) = s( \delta )$,  
for every $\delta< \gamma $.  
With the above terminology, Left moves on $s$
by choosing some $\gamma < \ell (s)$
such that $s( \gamma ) ={+}$. The outcome of the move is
$ s { \restriction  \gamma }  $.

To a sequence $\mathbf s =( s_i) _{i < \omega} $ of surreal numbers,
let us associate a game $G(\mathbf s)$ as follows. 
A Left move on $G(\mathbf s)$ consists of choosing
some $\bar \imath < \omega$, some finite $F \subseteq  \omega$ 
with $\bar \imath \notin F$  
 and some $\gamma < \ell (s _{\bar \imath } )$
such that $s_{ \bar \imath } ( \gamma ) ={+}$.  
The outcome of the above Left move is the game 
$G(\mathbf t)$ associated to the sequence 
$\mathbf t =( t_i) _{i < \omega} $ defined by
\begin{equation*} \labbel{defgt}
\begin{aligned} 
t_i &= s _i, && \text{if  $i \in F$, \  or  \ $ \ell (s_i) \leq \gamma  $,
\ or  \ $s_i( \delta )= -$, for every $\delta \geq \gamma $,} 
\\
t_i & = s_i { \restriction  \gamma _i} ,   && \text{otherwise, where 
$\gamma_i$ is the smallest ordinal $\geq \gamma $
  such that $s( \gamma _i) = {+}$.}
\end{aligned}
\end{equation*}      
Right moves are defined in the symmetrical way, with the signs
$+$ and $-$ exchanged. 

The arguments presented in Definition \ref{gamesum}
can be adapted to show that if $\mathbf s$ is a sequence
 of surreal numbers, then
 $G(\mathbf s)$ has no infinite run. 
 
(4) Another tentative definition, which works for arbitrary 
combinatorial games, can be obtained by merging some ideas
from the definition of the Conway product,
since (c) in (1) above is a  desirable property.

In detail, let $ \mathbf H= ( H_i) _{i < \omega} $ be a sequence of combinatorial
games. Let us define $G(\mathbf H)$ as follows.
The Player supposed to move chooses some $\bar{\imath} < \omega $,
some position $H_{ \bar{\imath} }^P$ on $H_{ \bar{\imath} }$
he can move to, and some finite set $ F \subseteq  \omega$     
such that $\bar{\imath} \notin F$. Define another sequence 
$ \mathbf K= ( K_i) _{i < \omega} $ by
\begin{equation} \labbel{defgtt}
K_i = H_{ \bar{\imath} }^P,
\text{ if  $H_{ \bar{\imath} }^P$ is a Player's option in $K_i$}; 
\end{equation}      
 otherwise, $K_i$ is the game in which the Player has only those
options of $H_i$ which have birthday $<$ than the birthday
of    $H_{ \bar{\imath} }^P$, while the options of the Other
player are exactly her options in $H_i$.

The outcome of such a move is the game
$ H_{i_1} - K_{i_1}+ H_{i_2} - K_{i_2} + \dots + H_{i_n} - K_{i_n}
+ G ( \mathbf K) $, where $F= \{ i_1, \dots, i_n \} $. 
Of course, the outcome is simply $G ( \mathbf K)$
if the player chooses $F= \emptyset $.  
\end{remark}

\begin{problem} \labbel{!}
One can define the Ring $\mathbf R$ of the (Hahn) formal series of the type
\begin{equation} \labbel{!!}  
\sum _{  \beta < \alpha  }  x ^ {a_ \beta} r_ \beta   
  \end{equation}     
for $\alpha$ an ordinal,  $(r_ \beta ) _{ \beta < \alpha } $  a sequence
of nonzero real numbers and $(a_ \beta ) _{ \beta < \alpha } $ 
a sequence of surreal numbers such that $a _ \beta > a_ \gamma $,
for $ \beta > \gamma $. This is a dual (order reversed) notion with respect to 
Conway normal forms, when considered as formal  series.

Can we assign a surreal value to all the series as above, when $x$
is interpreted as $ \omega$? In detail,  is there a ring homomorphism
from $\mathbf R$ as defined above to the class of surreal numbers,
such that $x^a$ is sent to $ \omega^a$, for every surreal number $a$?   
Of course, counterintuitive outcomes would necessarily emerge, for example
(identifying $x$ with $w$)
$1+ \omega + \omega ^2 + \dots $ should be sent to $  \frac{1}{1- \omega } $. 
But, above all, equation \eqref{!!} is not sufficient to define
infinite sums for \emph{all} sequences of surreal numbers.   
 \end{problem}

\section{Appendix II. A direct proof of Theorem \ref{simpth}} \labbel{appii} 

We have obtained the ``minimality''
part of  Theorem \ref{simpth} at the end of Section \ref{rank} 
in a rather indirect way,  
as a consequence of  the
rank-theoretical 
characterization of $\sums$ obtained in 
Section \ref{rank}   
(equivalently,  game-theoretical characterization
in Section \ref{game}).
The proofs in both sections rely heavily on the characterization in 
Theorem \ref{oc}.

For the reader's convenience, in this appendix we present a direct proof
that if $\sumt$ is an  operation
which is weakly monotone and 
strictly monotone on z-special elements, then
$\sums _{i < \omega} \alpha_{i} \leq \sumt _{i < \omega} \alpha_{i}$
for every sequence $\bm \alpha$. The proof is by induction on
$\sums _{i < \omega} \alpha_{i}$ and
 is very similar to  the arguments in Proposition \ref{minnat}.

The base case is straightforward, 
since $0$ is the smallest ordinal. 

 For the successor step,
suppose that $\sums _{i < \omega} \alpha_{i} \leq \sumt _{i < \omega} \alpha_{i}$
for every sequence with 
$ \gamma = \sums _{i < \omega} \alpha_{i}$.
If $\bm \beta $ is a sequence with 
$ \gamma+1 = \sums _{i < \omega} \beta _{i}$,
then, if case \eqref{seconda} applies,
  at least one  $ \beta  _{i_j}^\heartsuit$ is a successor ordinal
and, 
if case \eqref{prima} applies,
  at least one  $ \beta  _{i_j}^\diamondsuit$ is a successor ordinal.
Indeed, in case \eqref{seconda},
$\zeta_ {\bm \beta }  \x \omega$ is always a limit
ordinal or $0$.  
In case \eqref{prima},
$(\hat \zeta_ {\bm \beta }  \x \omega)  \+ \omega ^ {\xi_0}$ is always a limit
ordinal, since $\xi_0$ is assumed to be limit, in particular,
$\xi_0 > 0$. 

We first deal with case \eqref{seconda}.
Let    $\bm \alpha $ be the sequence in which
$ \beta  _{i_j}$ is changed to its predecessor $\alpha _{i_j}=
\beta  _{i_j}-1$ and all the other elements
are unchanged. Then
$\sums _{i < \omega} \alpha_{i} < \sums _{i < \omega} \beta _{i}$ 
by strict monotonicity of the finitary natural sum
and,  by the inductive assumption,
  $\sums _{i < \omega} \alpha_{i} \leq \sumt _{i < \omega} \alpha_{i}$.
Changing the value of just one summand does not alter
$\zeta$ and $\varepsilon$, hence the e-special elements are the same
for $\bm \beta $ and for $\bm \alpha  $.
Note that 
$ \beta  _{i_j}^\heartsuit \geq 1$,
since it is a successor ordinal, hence
$\beta  _{i_j}  > \zeta_ {\bm \beta}$
(thus also $\beta  _{i_j}  \geq \varepsilon_ {\bm \beta} $), since  
$ \beta  _{i_j}  = \zeta_ {\bm \beta} + \beta  _{i_j}^\heartsuit $,
by construction.
Hence $ \beta  _{i_j}$ is z-special and 
$\sumt _{i < \omega} \alpha_{i} < \sumt _{i < \omega} \beta _{i}$,
if $\sumt$ satisfies \eqref{s3'}. Then   
\begin{equation*} 
\sums _{i < \omega} \beta _{i} = 
\big (\sums _{i < \omega} \alpha_{i}\big) +1  
\leq \big ( \sumt _{i < \omega} \alpha_{i} \big ) +1 \leq
\sumt _{i < \omega} \beta _{i}
 \end{equation*}     

The case in which \eqref{prima} applies is entirely similar,
dealing with  $ \beta  _{i_j}^\diamondsuit$ instead.
Note that if \eqref{prima} applies, then 
 $\varepsilon_ {\bm \alpha}   = \zeta_ {\bm \alpha} $
is a limit ordinal. Hence, if 
$\beta  _{i_j}= \alpha   _{i_j}+1$,
then $\beta  _{i_j}$ is e-special if and only if
$ \alpha   _{i_j}$    is e-special.

It remains to deal with
 the limit step.
Suppose that 
$\gamma$ is a limit ordinal and
$\sums _{i < \omega} \alpha_{i} \leq \sumt _{i < \omega} \alpha_{i}$
for every sequence $\bm \alpha $ such that  
$ \sums _{i < \omega} \alpha_{i} < \gamma $.

If $\bm \beta $ is a sequence such that 
$ \sums _{i < \omega} \beta _{i} = \gamma $,
then all the summands in the expression giving
$ \sums _{i < \omega} \beta _{i}$, either from
\eqref{prima} or from \eqref{seconda},
are limit ordinals (or possibly $0$), since $\gamma$ 
is a limit ordinal and in
\eqref{prima} and \eqref{seconda} we are dealing with
natural sums.
In order to avoid confusion with the summands
of the sequence, we will use the expression
\emph{S-summand} for a summand  in the formulas
\eqref{prima} or  \eqref{seconda} giving $ \sums _{i < \omega} \beta _{i}$.

Choose a nonzero S-summand with the smallest exponent
in its normal form. 
This might be 
$ \omega ^{\xi_0}$, or possibly come from
the normal form of
some $ \beta  _{i_j}^\diamondsuit$
in case \eqref{prima}, or it might be 
$ \omega ^{\xi_0+1}$
or come from some 
$ \beta  _{i_j}^\heartsuit$
in case \eqref{seconda}.
There might be two or more S-summands
 with the same smallest exponent
in  normal form;
if $ \omega ^{\xi_0}$ appears among such summands
give the preference to  $ \omega ^{\xi_0}$.
We now need to consider all the above cases.

(a) First, consider the case when an
S-summand with the smallest exponent comes from
$ \beta  _{i_j}^\diamondsuit$.
By the choice of $ \beta  _{i_j}^\diamondsuit$
 and by a finitary generalization  of   \ref{lim},
$\sums _{i < \omega} \beta _{i} = 
(\hat \zeta_ {\bm \beta }  \x \omega)  \+ \omega ^ {\xi_0} \+
\beta  _{i_1}^\diamondsuit \+ \dots \+ \beta  _{i_h}^\diamondsuit $
is the supremum of 
the set of ordinals of the form
\begin{equation*}\labbel{ld} 
\lambda_ \delta=    
  (\hat \zeta_ {\bm \beta }  \x \omega)  \+ \omega ^ {\xi_0} \+
\beta  _{i_1}^\diamondsuit \+ \dots \+
\beta  _{i_{j-1}}^\diamondsuit 
\+ \delta \+  \beta  _{i_{j+1}}^\diamondsuit 
\+ \dots \+ \beta  _{i_h}^\diamondsuit  
   \end{equation*}
with $\delta$ varying among those ordinals which
are $< \beta  _{i_j}^\diamondsuit$. 

Fix some  $\delta < \beta  _{i_j}^\diamondsuit$
such that $ \hat \zeta_ {\bm \beta}+ \delta \geq 
\zeta_ {\bm \beta} $. This is possible, 
since  $\beta  _{i_j}$ is e-special, 
thus $\beta  _{i_j} \geq \zeta_ {\bm \beta}$,
actually,   $\beta  _{i_j} > \zeta_ {\bm \beta}$, since
in the case $\beta  _{i_j} = \zeta_ {\bm \beta}$
we should have chosen $ \omega ^{\xi_0}$, instead.
Let
  $\bm \alpha $ be the sequence in which
$ \beta  _{i_j} = \hat \zeta_ {\bm \beta}+ \beta  _{i_j}^\diamondsuit  $
 is changed to 
 $\alpha _{i_j}=\hat \zeta_ {\bm \beta}+ \delta $.
As above, changing the value of just one summand does not alter
$\zeta$ and $\varepsilon$, hence 
\eqref{prima} gives
$\sums _{i < \omega} \alpha_{i}=
  \lambda _ \delta $.  
By the inductive assumption and monotonicity of
$\sumt$,
  $\lambda _ \delta =
\sums _{i < \omega} \alpha_{i} \leq \sumt _{i < \omega} \alpha_{i}
\leq \sumt _{i < \omega} \beta _{i}$.
Now let $\delta$ vary, with the above
constraints $\delta < \beta  _{i_j}^\diamondsuit$
and $ \hat \zeta_ {\bm \beta}+ \delta \geq 
\zeta_ {\bm \beta} $. Since the above argument works for unboundedly many
$ \delta < \beta  _{i_j}^\diamondsuit$, we get 
$\sums _{i < \omega} \beta _{i} = 
\sup _{ \delta < \beta  _{i_j}^\diamondsuit}  \lambda _ \delta 
\leq \sumt _{i < \omega} \beta _{i}$.

(b) The case when an
S-summand with the smallest exponent is
$ \beta  _{i_j}^\heartsuit$ and we apply \eqref{seconda}
  is entirely similar, actually, slightly simpler,
since we do not need to deal with the distinction
between 
$\hat \zeta_ {\bm \beta}$ and 
 $ \zeta_ {\bm \beta}$. 

(c)  Consider now the case when
\eqref{prima} applies and 
an
S-summand with the smallest exponent
is 
$ \omega ^{\xi_0}$.
Because of the conditions required in
order to apply \eqref{prima},  
for every 
$ \delta^* < \zeta_ {\bm \beta }$,
there are infinitely many $\beta_i$
such that  $ \delta^* \leq \beta _i < \zeta_ {\bm \beta }$.
Fix some ordinal $ \delta < \omega ^{\xi_0}$, hence,
if we set
 $ \delta^* = \hat \zeta_ {\bm \beta } + \delta   $, 
then $ \delta^* < \zeta_ {\bm \beta }$.
Consider the sequence $\bm \alpha$ 
 in which all the $\beta_i$  such that 
$ \delta^*=  \hat \zeta_ {\bm \beta } + \delta \leq \beta _i < \zeta_ {\bm \beta }$ 
 are changed to 
$\hat \zeta_ {\bm \beta } + \delta$
(note that if $\beta _i \geq \zeta_ {\bm \beta }$,
we do not change the value of $\beta_i$).
Thus $\bm \alpha \leq \bm \beta $, $ \zeta_ {\bm \alpha  }=
\hat \zeta_ {\bm \beta }+ \delta $ and Clause
\eqref{seconda} applies to    $\bm \alpha  $, hence
 $\sums _{i < \omega} \alpha_{i} = (\hat \zeta_ {\bm \beta } \x \omega )
\+ ( \delta   \x \omega ) \+ \alpha _{i_1}^\heartsuit 
\+ \dots \+ \alpha _{i_h}^\heartsuit $. 
Since exactly those $\beta_i$ with  
$  \hat \zeta_ {\bm \beta } + \delta \leq 
\beta _i < \zeta_ {\bm \beta }$ are changed to 
$ \hat \zeta_ {\bm \beta } + \delta$,
the set of indexes for the e-special elements relative
to $\bm \alpha$ is equal to 
the set of indexes for the e-special elements relative
to $\bm \beta $.
For each such index, we have 
$ \hat \zeta_ {\bm \beta } +  \beta  _{i_j}^\diamondsuit  =
 \beta  _{i_j} =  \alpha   _{i_j} =
 \zeta_ {\bm \alpha  } + \alpha   _{i_j} ^\heartsuit=
 \hat \zeta_ {\bm \beta } + \delta + \alpha   _{i_j} ^\heartsuit$,
hence 
$ \beta  _{i_j}^\diamondsuit  = \delta + \alpha   _{i_j} ^\heartsuit$,
thus  $ \beta  _{i_j}^\diamondsuit  \geq \alpha   _{i_j} ^\heartsuit$.
But also
$ \beta  _{i_j}^\diamondsuit  \leq \alpha   _{i_j} ^\heartsuit$.
Indeed, write $\alpha   _{i_j} ^\heartsuit$ in normal form:
the smallest exponent of $\alpha   _{i_j} ^\heartsuit$ should be equal to  
the smallest exponent of  $\beta  _{i_j}^\diamondsuit$
in normal form, since
 $ \beta  _{i_j}^\diamondsuit  = \delta + \alpha   _{i_j} ^\heartsuit$,
$ \delta < \omega ^{\xi_0}$ and, by assumption, 
the smallest exponent of $  \omega ^{\xi_0}$
is $\leq$ than the smallest exponent of 
$\beta  _{i_j}^\diamondsuit $.
Thus $\delta$ is absorbed in the 
ordinal (not natural!) sum $\delta + \alpha   _{i_j} ^\heartsuit$,
hence 
$ \beta  _{i_j}^\diamondsuit  = \delta + \alpha   _{i_j} ^\heartsuit
= \alpha   _{i_j} ^\heartsuit$.

Thus we have  
$ \sums _{i < \omega} \alpha_{i} = (\hat \zeta_ {\bm \beta } \x \omega )
\+ ( \delta   \x \omega ) \+ \beta  _{i_1}^\diamondsuit 
\+ \dots \+ \beta  _{i_h}^\diamondsuit$. 
Since we are applying 
\eqref{prima} in evaluating $\sums _{i < \omega} \beta _{i}$,
by the assumptions,  $\xi_0$ is a limit ordinal,
hence  $\delta   \x \omega < \omega ^{\xi_0}$,
since $ \delta < \omega ^{\xi_0}$.  
This shows that
$\sums _{i < \omega} \alpha_{i} < \sums _{i < \omega} \beta _{i}$,
a fact which follows also from the 
proof of Lemma \ref{lemmo}(iv).
In particular, we can apply the inductive hypothesis, getting 
$\sums _{i < \omega} \alpha_{i} \leq 
\sumt _{i < \omega} \alpha_{i} 
\leq \sumt _{i < \omega} \beta _{i}$,
by monotonicity of $\sumt$. 
 
Now let $\delta$ vary.
By above, we have
$ \lambda _ \delta =
(\hat \zeta_ {\bm \beta } \x \omega )
\+ ( \delta   \x \omega ) \+ \beta  _{i_1}^\diamondsuit 
\+ \dots \+ \beta  _{i_h}^\diamondsuit \leq 
\sumt _{i < \omega} \beta _{i}$, for every 
$\delta < \omega ^{\xi_0}$.
As we mentioned, since   
$\xi_0$ is limit,  
$\delta   \x \omega < \omega ^{\xi_0}$, hence
all the exponents in the normal form of 
$\delta   \x \omega $ are smaller than all the exponents
in the normal forms of the other S-summands giving  
$\lambda _ \delta $. 
By a generalization of Remark \ref{lim},  
\begin{align*}  
 \sup _{\delta < \omega ^{\xi_0}} \lambda _ \delta &=
(\hat \zeta_ {\bm \beta } \x \omega )
\+ \left(\sup _{\delta < \omega ^{\xi_0}}( \delta   \x \omega ) \right)
 \+ \beta  _{i_1}^\diamondsuit 
\+ \dots \+ \beta  _{i_h}^\diamondsuit 
\\
&=
(\hat \zeta_ {\bm \beta } \x \omega )
\+  \omega ^{\xi_0}
 \+ \beta  _{i_1}^\diamondsuit 
\+ \dots \+ \beta  _{i_h}^\diamondsuit 
=
\sums _{i < \omega} \beta  _{i}.
\end{align*} 
Since 
$ \lambda _ \delta  \leq 
\sumt _{i < \omega} \beta _{i}$, for every 
$\delta < \omega ^{\xi_0}$, we get 
$\sums _{i < \omega} \beta  _{i} \leq 
\sumt _{i < \omega} \beta  _{i}$, what we had to show.

(d) We now deal with 
the case when
 there are infinitely many $\beta_i $
equal to  $\zeta_ {\bm \beta }$ (in particular,
\eqref{seconda} applies) and, again, 
an S-summand with the smallest exponent
is 
$ \omega ^{\xi_0}$.
Let $\zeta_ {\bm \beta }  =
 \omega ^ {\xi_s}  + 
\dots
+ \omega ^ {\xi_1}
+ \omega ^ {\xi_0}$ in normal form.

Fix some $n< \omega$.
Let $\bm \alpha $ be the sequence
obtained from $\bm \beta  $ by  changing to 
$ \omega ^ {\xi_s}  + \dots + \omega ^ {\xi_1}$
 all but $n$-many
$\beta_i$, among those which are equal to $\zeta_ {\bm \beta }$. 
Then $\zeta_ {\bm \alpha }=  \omega ^ {\xi_s}  + 
\dots+ \omega ^ {\xi_1} $.
If  $ \beta    _{i_j} $ is e-special
with respect to  $\bm \beta  $, then 
$ \beta    _{i_j} > \zeta_ {\bm \beta }$, since
infinitely many $\beta_i$ are equal to
$\zeta_ {\bm \beta }$; moreover, 
$\alpha   _{i_j}$ is  
e-special
with respect to  $\bm \alpha  $.
We have
$\omega ^ {\xi_s}  +  \dots + \omega ^ {\xi_1}+ \omega ^ {\xi_0}
+ \beta    _{i_j} ^\heartsuit
= \zeta_ {\bm \beta } + \beta    _{i_j} ^\heartsuit=
\beta    _{i_j} = \alpha     _{i_j}=
 \zeta_ {\bm \alpha  } + \alpha     _{i_j} ^\heartsuit=
\omega ^ {\xi_s}  +  \dots + \omega ^ {\xi_1}+ \alpha     _{i_j} ^\heartsuit$,
thus 
$\omega ^ {\xi_0}+ \beta    _{i_j} ^\heartsuit=
\alpha     _{i_j} ^\heartsuit$. 
Actually, $ \beta    _{i_j} ^\heartsuit=
\alpha     _{i_j} ^\heartsuit$, since all the exponents in the
normal form of $ \beta    _{i_j} ^\heartsuit $  are 
$> \xi_0$, by the assumptions in the present case, hence  $\omega ^ {\xi_0} $
is absorbed in the sum $\omega ^ {\xi_0}+ \beta    _{i_j} ^\heartsuit$. 

But there are more e-special elements for the sequence
$\bm \alpha $: they correspond to those 
$\beta_i =  \zeta_ {\bm \beta }$ which have \emph{not}
been changed to   $ \omega ^ {\xi_s}  + \dots + \omega ^ {\xi_1}$,
so $\alpha_i = \beta _i  = \zeta_ {\bm \beta }$ for
such indexes. In this case,
computing $\alpha   _{i} ^\heartsuit$  with respect to  $\bm \alpha $,
we get
$ \omega ^ {\xi_s}  +  \dots + \omega ^ {\xi_1} +  \alpha   _{i} ^\heartsuit = 
\zeta_ {\bm \alpha  } + \alpha   _{i} ^\heartsuit =
\alpha     _{i} =  \zeta_ {\bm \beta } =
 \omega ^ {\xi_s}  + \dots + \omega ^ {\xi_1} + \omega ^ {\xi_0}$,
thus
$\alpha     _{i} ^\heartsuit = \omega ^ {\xi_0}$.
We have $n$ indexes as above, hence
 $\sums _{i < \omega} \alpha  _{i} = 
(\zeta_ {\bm \alpha  } \x \omega ) \+ (\omega ^ {\xi_0} \x n) 
\+ \alpha   _{i_1}^\heartsuit 
\+ \dots \+ \alpha   _{i_h}^\heartsuit =
(\zeta_ {\bm \alpha  } \x \omega ) \+ (\omega ^ {\xi_0} \x n) 
\+ \beta  _{i_1}^\heartsuit 
\+ \dots \+ \beta  _{i_h}^\heartsuit $. 

Since we have already proved that $\sums$ satisfies \eqref{s3},
 $\sums _{i < \omega} \alpha   _{i} < 
\sums _{i < \omega} \beta  _{i}$, hence,
by the inductive hypothesis and monotonicity of $\sumt$,
$ \sums _{i < \omega} \alpha   _{i} = 
(\zeta_ {\bm \alpha  } \x \omega ) \+ (\omega ^ {\xi_0} \x n)  
\+ \beta  _{i_1}^\heartsuit 
\+ \dots \+ \beta  _{i_h}^\heartsuit 
=\sums _{i < \omega} \alpha  _{i} \leq 
\sumt _{i < \omega} \alpha   _{i} 
\leq \sumt _{i < \omega} \beta   _{i}$.

Now let $n$ vary. 
Again by a variation on Remark \ref{lim}, we have 
\begin{align*}
   \sums _{i < \omega} \beta   _{i}
&=
\omega ^ {\xi_s+1}  \+ \dots \+ \omega ^ {\xi_1+1} \+ \omega ^ {\xi_0+1}
\+ \beta  _{i_1}^\heartsuit 
\+ \dots \+ \beta  _{i_h}^\heartsuit
\\
& =
(\zeta_ {\bm \alpha  } \x \omega ) \+ 
( \sup _{ n< \omega}  (\omega ^ {\xi_0} \x n)) 
\+ \beta  _{i_1}^\heartsuit 
\+ \dots \+ \beta  _{i_h}^\heartsuit
\\
& = ^{\text{Remark \ref{lim}}} 
\sup _{ n< \omega} \big(
(\zeta_ {\bm \alpha  } \x \omega ) \+
 ( \omega ^ {\xi_0} \x n) 
\+ \beta  _{i_1}^\heartsuit 
\+ \dots \+ \beta  _{i_h}^\heartsuit \big) \leq 
\sumt _{i < \omega} \beta   _{i}.
\end{align*}  

(e) The remaining case to be dealt with
is when, again, 
an S-summand with the smallest exponent
is $ \omega ^{\xi_0}$, but  there are  only finitely many $\beta_i $
equal to  $\zeta_ {\bm \beta }$ and
\eqref{seconda} applies (the case when \eqref{prima} 
applies has been treated in (c)).
Then, by Remark \ref{determ}(b)(c),  
$ \omega ^{\xi_0}$ is a limit ordinal, hence
$ \xi_0 >0$.

As in (c),
for every 
$ \delta^* < \zeta_ {\bm \beta }$,
there are infinitely many $\beta_i$
such that  $ \delta^* \leq \beta _i < \zeta_ {\bm \beta }$.
Fix some ordinal $ \delta < \omega ^{\xi_0}$, hence
 $ \delta^* = \omega ^ {\xi_s}  + \dots + \omega ^ {\xi_1} + \delta
   < \zeta_ {\bm \beta }$.
Consider the sequence $\bm \alpha$ 
 in which all the $\beta_i$  such that 
$ \delta^* \leq \beta _i < \zeta_ {\bm \beta }$ 
 are changed to 
$ \delta^*$.
Thus $\bm \alpha \leq \bm \beta $, $ \zeta_ {\bm \alpha  }=
\delta^* $ and Clause
\eqref{seconda} applies to    $\bm \alpha  $, hence
 $\sums _{i < \omega} \alpha_{i} = 
( \omega ^ {\xi_s+1}  + \dots + \omega ^ {\xi_1+1} )
\+ ( \delta   \x \omega ) \+ \alpha _{i_1}^\heartsuit 
\+ \dots \+ \alpha _{i_h}^\heartsuit $. 
The set of indexes for the e-special elements relative
to $\bm \alpha$ is equal to 
the set of indexes for the e-special elements relative
to $\bm \beta $.
For each such index, we have 
$  \omega ^ {\xi_s}  + \dots + \omega ^ {\xi_1} + \omega ^ {\xi_0} 
+   \beta  _{i_j}^\heartsuit =
  \zeta_ {\bm \beta } +  \beta  _{i_j}^\heartsuit  =
 \beta  _{i_j} =  \alpha   _{i_j} =
 \zeta_ {\bm \alpha  } + \alpha   _{i_j} ^\heartsuit=
\omega ^ {\xi_s}  + \dots + \omega ^ {\xi_1} + \delta + \alpha   _{i_j} ^\heartsuit$,
hence 
$ \omega ^ {\xi_0} +   \beta  _{i_j}^\heartsuit   = \delta + \alpha   _{i_j} ^\heartsuit$.
As above, by the assumptions on $\omega ^ {\xi_0}$,
the first summands are absorbed, hence  
$   \beta  _{i_j}^\heartsuit   =  \alpha   _{i_j} ^\heartsuit$,
thus 
$\sums _{i < \omega} \alpha_{i} = 
( \omega ^ {\xi_s+1}  + \dots + \omega ^ {\xi_1+1} )
\+ ( \delta   \x \omega ) \+ \beta  _{i_1}^\heartsuit 
\+ \dots \+ \beta  _{i_h}^\heartsuit $, in particular, 
$\sums _{i < \omega} \alpha_{i} < 
( \omega ^ {\xi_s+1}  + \dots + \omega ^ {\xi_1+1} + \omega ^ {\xi_0+1} )
 \+ \beta  _{i_1}^\heartsuit \+ \dots \+ \beta  _{i_h}^\heartsuit
= \sums _{i < \omega} \beta _{i} $.
By the inductive assumption and monotonicity,
$\sums _{i < \omega} \alpha_{i}  \leq 
\sumt _{i < \omega} \alpha_{i} \leq \sumt _{i < \omega} \beta _{i}$.

As usual by now, by the analogue of Remark \ref{lim} 
and the assumptions on $ \omega ^ {\xi_0}$, 
letting $\delta$ vary
\begin{align*}
 \sums _{i < \omega} \beta _{i}
&= 
 \omega ^ {\xi_s+1}  \+ \dots \+ \omega ^ {\xi_1+1} \+ \omega ^ {\xi_0+1} 
 \+ \beta  _{i_1}^\heartsuit \+ \dots \+ \beta  _{i_h}^\heartsuit 
\\
&=
 \omega ^ {\xi_s+1}  \+ \dots \+ \omega ^ {\xi_1+1} \+
(\sup _{\delta < \omega ^{\xi_0}} (\delta \x \omega ) )  
 \+ \beta  _{i_1}^\heartsuit \+ \dots \+ \beta  _{i_h}^\heartsuit 
\\
&=
\sup _{\delta < \omega ^{\xi_0}} \big( 
 \omega ^ {\xi_s+1}  \+ \dots \+ \omega ^ {\xi_1+1} \+
 (\delta \x \omega ) 
 \+ \beta  _{i_1}^\heartsuit \+ \dots \+ \beta  _{i_h}^\heartsuit
\big)
\leq \sumt _{i < \omega} \beta _{i}
 \end{align*} 
where we have used the fact that
$\sup _{\delta < \omega ^{\xi_0}} (\delta \x \omega ) =
\omega ^ {\xi_0+1}$, since  $\xi_0$
is not limit (and not $0$).

\renewcommand\refname{Additional References}

\end{document}